\documentclass[12pt,reqno]{amsart}
\usepackage[a4paper,
  left=25mm,
  right=25mm,
  top=25mm,
  bottom=25mm
]{geometry}

\usepackage{amsmath,amsfonts,amssymb, amsthm}
\usepackage{bm}
\usepackage{graphicx}
\usepackage{ascmac}
\usepackage[legacycolonsymbols]{mathtools}
\usepackage{nccmath} % needed!
\usepackage{mathrsfs}
\usepackage{mleftright}

\usepackage[italicdiff]{physics}

\usepackage{thmtools}
\usepackage{thm-restate} % use this?
\usepackage{needspace}
\usepackage{etoolbox}
\usepackage{xspace}
\usepackage{placeins}
\usepackage{hyperref}
\usepackage{multirow}

\usepackage{enumitem}
\setlist[enumerate]{label=(\arabic*), ref=(\arabic*), leftmargin=2.63em, labelsep=0.35em}
\setlist[itemize]{leftmargin=2em, labelsep=0.4em}
\mleftright
\allowdisplaybreaks
\mathtoolsset{showonlyrefs=true}
\def\thefootnote{\arabic{footnote}}

\DeclareMathAlphabet{\mathpzc}{OT1}{pzc}{m}{it}

\makeatletter

\newif\if@noindentafterheading
\let\orig@afterheading\@afterheading
\def\@afterheading{%
  \if@noindentafterheading\global\@afterindentfalse\fi
  \orig@afterheading
}

\renewcommand\section{\@startsection{section}{1}{\z@}%
  {-3.5ex \@plus -1ex \@minus -.2ex}% before skip
  {2.6ex \@plus .2ex}% after skip
  {\normalfont\large\bfseries}%
}

\renewcommand\subsection{\@startsection{subsection}{2}{\z@}%
  {-2.5ex \@plus -.3ex \@minus -.2ex}%
  {1.5ex \@plus .2ex}%
  {\normalfont\bfseries}%
}

\let\orig@section\section
\renewcommand\section{%
  \@ifstar{\section@star}{\section@nostar}%
}
\newcommand\section@nostar{%
  \@ifnextchar[{\section@opt}{\section@noopt}%
}
\newcommand\section@opt[2][]{%
  \global\@noindentafterheadingtrue
  \orig@section[#1]{#2}%
  \global\@noindentafterheadingfalse
}
\newcommand\section@noopt[1]{%
  \global\@noindentafterheadingtrue
  \orig@section{#1}%
  \global\@noindentafterheadingfalse
}
\newcommand\section@star[1]{%
  \global\@noindentafterheadingtrue
  \orig@section*{#1}%
  \global\@noindentafterheadingfalse
}

\let\orig@subsection\subsection
\renewcommand\subsection{%
  \@ifstar{\subsection@star}{\subsection@nostar}%
}
\newcommand\subsection@nostar{%
  \@ifnextchar[{\subsection@opt}{\subsection@noopt}%
}
\newcommand\subsection@opt[2][]{%
  \global\@noindentafterheadingtrue
  \orig@subsection[#1]{#2}%
  \global\@noindentafterheadingfalse
}
\newcommand\subsection@noopt[1]{%
  \global\@noindentafterheadingtrue
  \orig@subsection{#1}%
  \global\@noindentafterheadingfalse
}
\newcommand\subsection@star[1]{%
  \global\@noindentafterheadingtrue
  \orig@subsection*{#1}%
  \global\@noindentafterheadingfalse
}

\renewcommand\subsubsection{\@startsection{subsubsection}{3}{\z@}%
  {-2.0ex \@plus -.3ex \@minus -.2ex}% before skip
  {1.0ex \@plus .2ex}% after skip
  {\normalfont\bfseries}% font
}

\let\orig@subsubsection\subsubsection
\renewcommand\subsubsection{%
  \@ifstar{\subsubsection@star}{\subsubsection@nostar}%
}
\newcommand\subsubsection@nostar{%
  \@ifnextchar[{\subsubsection@opt}{\subsubsection@noopt}%
}
\newcommand\subsubsection@opt[2][]{%
  \global\@noindentafterheadingtrue
  \orig@subsubsection[#1]{#2}%
  \global\@noindentafterheadingfalse
}
\newcommand\subsubsection@noopt[1]{%
  \global\@noindentafterheadingtrue
  \orig@subsubsection{#1}%
  \global\@noindentafterheadingfalse
}
\newcommand\subsubsection@star[1]{%
  \global\@noindentafterheadingtrue
  \orig@subsubsection*{#1}%
  \global\@noindentafterheadingfalse
}

\makeatother
\newcommand{\mbE}{\mathbb{E}}
\newcommand{\mbG}{\mathbb{G}}
\newcommand{\mbP}{\mathbb{P}}

\newcommand{\msD}{\mathscr{D}}

\newcommand{\Z}{\mathbb{Z}}								% integer
\newcommand{\Zz}{\mathbb{Z}_{\geq 0}}						% non-negative integer
\newcommand{\Zp}{\mathbb{Z}_{> 0}}						% positive integer
\newcommand{\Q}{\mathbb{Q}}								% rational number
\newcommand{\Qb}{\overline{\mathbb{Q}}}					% algebraic number
\newcommand{\C}{\mathbb{C}}								% complex number
\newcommand{\Ch}{\widehat{\mathbb{C}}}					% Riemann sphere
\newcommand{\Pj}{\mathbb{P}}								% projective space

\newcommand{\A}{\alpha}									% 
\newcommand{\B}{\beta}									% 
\newcommand{\la}{\lambda}								% 
\newcommand{\vp}{\varphi}								% 
\newcommand{\OL}{\vspace{5mm}}							% 
\newcommand{\HL}{\vspace{2mm}}							% 
\newcommand{\q}{\quad}									% 1em

\newcommand{\f}[2]{\frac{#1}{#2}}							% 
\newcommand{\npmod}[1]{\!\!\!\! \pmod{#1}}					%
\newcommand{\ceq}{\coloneqq} 							% :=
\newcommand{\Ra}{\Rightarrow} 							% 
\newcommand{\Lra}{\Leftrightarrow} 							% 
\newcommand{\relmiddle}[1]{\mathrel{}\middle#1\mathrel{}}		% 
\newcommand{\ie}{i.e.\ }									% i.e.
\newcommand{\eg}{e.g.\ }									% e.g.

\newcommand{\id}{\mathop{\mathrm{id}}\nolimits}				% id
\newcommand{\Aut}{\mathop{\mathrm{Aut}}\nolimits}			% Aut
\newcommand{\Gal}{\mathop{\mathrm{Gal}}\nolimits}			% Gal
\newcommand{\ord}{\mathop{\mathrm{ord}}\nolimits}				% ord
\newcommand{\Sym}{\mathop{\mathrm{Sym}}\nolimits}			% Sym

\pretocmd{\section}{\needspace{4\baselineskip}}{}{}
\pretocmd{\subsection}{\needspace{3\baselineskip}}{}{}
\pretocmd{\subsubsection}{\needspace{2\baselineskip}}{}{}

\newtheoremstyle{mythmstyle}  % style name
  {12pt}   % space above
  {12pt}   % space below: increasing this value automatically adds vertical spacing
  {\itshape} % body font
  {}      % indentation
  {\bfseries} % theorem heading font
  {.}     % punctuation after the heading
  { }     % space after the heading
  {}      % theorem heading specification

\newtheoremstyle{mydefstyle}  % style name
  {12pt}   % space above
  {12pt}   % space below: increasing this value automatically adds vertical spacing
  {}       % body font
  {}       % indentation
  {\bfseries} % heading font
  {.}      % punctuation after the heading
  { }      % space after the heading
  {}       % theorem heading specification

\newtheoremstyle{myremarkstyle} % style name
  {12pt}   % space above
  {12pt}   % space below
  {}       % body font (roman)
  {}       % indentation
  {\normalfont} % heading font
  {.}      % punctuation after heading
  { }      % space after heading
  {}       % heading specification

\theoremstyle{mythmstyle}
\newtheorem{lemma}{Lemma}[section]
\newtheorem{prop}[lemma]{Proposition}
\newtheorem{thm}[lemma]{Theorem}
\newtheorem{cor}[lemma]{Corollary}

\theoremstyle{mydefstyle}
\newtheorem{definition}[lemma]{Definition}

\newtheorem{exa}[lemma]{Example}

\theoremstyle{myremarkstyle}
\newtheorem{rem}[lemma]{Remark}

\AtBeginEnvironment{thm}{\needspace{3\baselineskip}}
\AtBeginEnvironment{prop}{\needspace{3\baselineskip}}
\AtBeginEnvironment{lemma}{\needspace{3\baselineskip}}
\AtBeginEnvironment{cor}{\needspace{3\baselineskip}}
\AtBeginEnvironment{definition}{\needspace{3\baselineskip}}
\AtBeginEnvironment{defi}{\needspace{3\baselineskip}}
\AtBeginEnvironment{exa}{\needspace{3\baselineskip}}

\newcommand{\thmref}[1]{Theorem~\ref{#1}}
\newcommand{\propref}[1]{Proposition~\ref{#1}}
\newcommand{\lemref}[1]{Lemma~\ref{#1}}

\newcommand{\defref}[1]{Definition~\ref{#1}}
\newcommand{\exref}[1]{Example~\ref{#1}}
\newcommand{\figref}[1]{Figure~\ref{#1}}
\newcommand{\tabref}[1]{Table~\ref{#1}}

\newcommand{\belyi}{Bely\u{\i}\xspace}
\newcommand{\dde}{dessin d'enfant\xspace}
\newcommand{\ddes}{dessins d'enfants\xspace}
\newcommand{\Dde}{Dessin d'enfant\xspace}

\newcommand{\DdEs}{Dessins d'Enfants\xspace}
\newcommand{\otherwise}{\text{otherwise}}
\newcommand{\AD}{\Aut{\msD}}

\newcommand{\sigman}{(1\ 2\ \ldots\ n)}
\numberwithin{equation}{section}
\newcommand{\thmNT}{
Let $n \ge 3$ be an integer such that $n = bq$, with $b \ge 2$, and assume that $n \equiv q \pmod{2}$.
Fix $x = \sigman \in S_{n}$. Then

\begin{align}
\f{N(b, q)}{T(b, q)} \ge \f{2}{n+2}.
\end{align}

In particular, the inequality becomes an equality when $b = 2$.
}

\newcommand{\thmITa}{
Fix $x = \sigman \in S_{n}$. Among the elements $y \in S_{n}$ of cycle type $(b^{q})$, the number of those for which
$G = \langle x, y \rangle$ becomes an imprimitive group with respect to residue classes modulo
$m$ $(m \mid n$, $2 \le m < n)$ is given by

\begin{align}
I_{m}(b, q) = \f{m!}{b^{q}}\left(\left(\f{n}{m}\right)!\right)^{m}\sum_{\{(d_{i}, t_{i})\}}
\prod_{i}\f{1}{d_{i}^{t_{i}}t_{i}!}\left(\f{d_{i}^{\f{d_{i}q}{m}}}{\left(\f{d_{i}q}{m}\right)!}\right)^{t_{i}},
\end{align}
where the summation on the right-hand side is taken over all partitions $\{(d_{i}, t_{i})\}$ of $m$ satisfying
}

\newcommand{\thmITb}{
\sum_{i} d_{i} t_{i} = m, \quad d_{i}\text{ are all distinct}, \quad d_{i} \mid b, \ m \mid d_{i} q\text{ for all }i.
}
\newcommand{\thmITbit}{
\sum_{i} d_{i} t_{i} = m, \quad d_{i}\textit{ are all distinct}, \quad d_{i} \mid b, \ m \mid d_{i} q\textit{ for all }i.
}

\newcommand{\thmtrees}{For a uniform passport $[a^{p}, b^{q}, n] \ (p, q \ge 1)$, there exists a regular \dde with this passport if and only if $\gcd(p, q) = 1$.

The same statement holds for the passports $[a^{p}, n, b^{q}]$ and $[n, a^{p}, b^{q}]$.}

\newcommand{\thmggetwo}{If a uniform passport $[n, b^{q}, n]$ $(q \ge 1)$ has genus at least~$2$, then it admits a \dde with a trivial automorphism group.

The same statement holds for the passports $[b^{q}, n, n]$ and $[n, n, b^{q}]$.}

\begin{document}

\title[Regularity and Automorphism Groups of Dessins with Uniform Passports]{Regularity and Automorphism Groups of\\Dessins d'Enfants with Uniform Passports}

\author{Tatsuya Ohnishi}
%\date{\today}
\markboth{}{}

\begin{abstract}
For a smooth algebraic curve defined over a number field, one can associate a bipartite graph known as a \emph{\dde}.

In this paper, we investigate the regularity and automorphism groups of \ddes with \emph{uniform passports},
that is, those for which the valencies of black vertices, white vertices, and faces are constant,
and study how these properties depend on the genus.
Although uniformity imposes a high degree of symmetry, such dessins are not necessarily regular.

Our main results are as follows:
(1) A passport of the form $[a^{p}, b^{q}, n]$ (the tree case) admits a regular dessin if and only if $\gcd(p,q)=1$.
(2) Every passport of the form $[n, b^{q}, n]$ of genus at least~$2$ admits a dessin with a trivial automorphism group.

In addition, we obtain several results on uniform passports of genus~$0$ and~$1$.
We also establish two theorems on the enumeration of elements in symmetric groups,
which are useful for the study of automorphism groups of dessins.
\end{abstract}

\maketitle

\vspace{-1\baselineskip}

%\renewcommand{\thefootnote}{\fnsymbol{footnote}}
% Footnotes for arXiv version
\def\thefootnote{\fnsymbol{footnote}}
\makeatletter
\renewcommand\@makefntext[1]{%
  \noindent\hspace{0.5em}#1%
}
\makeatother
\footnotetext{Tatsuya~Ohnishi~(\raisebox{-0.18ex}{\includegraphics[height=2.05ex]{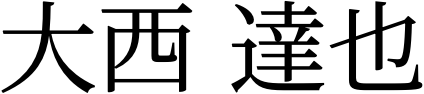}})}
\footnotetext{Graduate School of Information Science and Technology, The University of Osaka, Japan}
\footnotetext{e-mail: {\tt ohnishi-t@ist.osaka-u.ac.jp}}
\footnotetext{2020 Mathematics Subject Classification: Primary 14H57; Secondary 11G32}
\footnotetext{Keywords: \dde, uniform passport, monodromy group, automorphism group, regular dessin}
\makeatletter
\renewcommand\@makefntext[1]{%
  \noindent\@makefnmark\ #1%
}
\makeatother

\def\thefootnote{\arabic{footnote}}

%%%%%%%%%%%%%%%%
{\small
\tableofcontents 
}
%%%%%%%%%%%%%%%%

%\vspace{-1.5\baselineskip}

%\OL

%%%%%%%%%%%%%%%%%%%%%%%%%%%%%%%%%%%%%%%%%%%%%%%%
\section{Introduction}

%%%%%%%%%%%%%%%%%%%%%%%%%%%%%%%%%%%%
\subsection{\belyi's Theorem and \DdEs}

\label{sec:belyi}
\begin{thm}[\belyi's Theorem]\cite{Belyi79}\cite{Belyi02}\cite[Theorem~1.3]{Jones16}
Let $X$ be a compact Riemann surface, that is, a smooth projective algebraic curve in the complex projective space
$\Pj_{\C}^{N}$ for some $N$.
Then $X$ can be defined over the field of algebraic numbers $\Qb$ if and only if there exists a non-constant meromorphic
function $\B\colon X \to \Ch\ (\ceq \C \cup \{ \infty \})$ ramified over at most three points.
\end{thm}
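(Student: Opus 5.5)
The two implications are of very different natures, so I would prove them separately: the ``only if'' direction by an explicit algebraic construction (Bely\u{\i}'s algorithm), and the ``if'' direction by a finiteness-plus-Galois-action argument.

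For the direction ``$X$ defined over $\Qb$ $\Rightarrow$ a Bely\u{\i} function exists'', I would start from any non-constant rational function $f\colon X \to \Ch$ defined over $\Qb$, which exists because $X$ is. Its critical values form a finite set $S_0 \subset \Qb \cup \{\infty\}$. The first stage makes the branch set rational. Let $p_0$ be the product of the minimal polynomials over $\Q$ of the finite elements of $S_0$, and replace $f$ by $f_1 = p_0 \circ f$. The branch set of $f_1$ is contained in $p_0(S_0) \cup p_0(\mathrm{Crit}(p_0)) \cup \{\infty\}$. Here $p_0(S_0) = \{0\}$, and the critical values of $p_0$ are algebraic of degree at most $\deg p_0 - 1$. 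I would therefore run an induction on the maximal degree over $\Q$ of the non-rational branch values, repeatedly composing with such polynomials, until the branch set lies in $\Q \cup \{\infty\}$.

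The second stage shrinks a finite set of rationals to $\{0,1,\infty\}$. After a Möbius transformation over $\Q$, we may assume the branch set is contained in $\{0,1,\infty,\lambda\}$ with $\lambda = m/(m+n) \in (0,1)\cap\Q$ and $m,n \in \Zp$. Compose with
\begin{align}
h(x) = \f{(m+n)^{m+n}}{m^{m}n^{n}}\, x^{m}(1-x)^{n}.
\end{align}
This map sends $0,1,\infty$ to $0,0,\infty$, has its only finite critical point off $\{0,1\}$ at $\lambda$, and satisfies $h(\lambda)=1$. The branch set of $h\circ f$ therefore lies in $\{0,1,\infty\}$ together with $\lambda$'s old role removed. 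Inducting on the number of branch values finishes this direction. The only care needed is the bookkeeping that the branch set of a composition is contained in $g(\text{branch}(f)) \cup \text{branch}(g)$.

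For the converse, suppose $\B\colon X\to\Ch$ is ramified only over $\{0,1,\infty\}$, with degree $d$. The key step is finiteness: by the Riemann existence theorem, isomorphism classes of such covers of degree $d$ correspond to transitive triples of permutations in $S_d$ with product $1$, up to simultaneous conjugation, so there are only finitely many. For $\sigma\in\Aut(\C)$, the conjugate pair $(X^\sigma,\B^\sigma)$ is again such a cover of degree $d$, since $\{0,1,\infty\}$ is fixed by $\sigma$. Hence the orbit of $(X,\B)$ under $\Aut(\C)$ is finite. Consequently the stabilizer $U$ has finite index, and its fixed field $M = \C^U$ (the field of moduli) is a finite extension of $\Q$, hence a number field.

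The main obstacle is the final descent step, passing from ``field of moduli is a number field'' to ``$X$ is actually defined over $\Qb$''. My plan is to show first that a model of $(X,\B)$ exists over $\Qb$ via a specialization argument. Write $(X,\B)$ over a finitely generated field $K=\Qb(t_1,\dots,t_r)$; the family over a variety $V/\Qb$ has, at $\Qb$-points, covers with the same ramification data. Since there are only finitely many such covers and the family is continuous, the family is isotrivial on a Zariski-open set. Specializing at a $\Qb$-point then gives a cover isomorphic to $(X,\B)$, defined over $\Qb$. Alternatively, I would invoke Weil's descent criterion, using that $(X,\B)$ with automorphisms can be rigidified by marking a point. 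Either route yields $X$ defined over $\Qb$.
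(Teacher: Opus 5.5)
The paper does not prove this statement; it quotes it from Bely\u{\i} and Jones--Wolfart, so your outline has to stand on its own. It follows the standard proof, but the first stage of the easy direction fails as you set it up.

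You compose with $p_0$, the product of the minimal polynomials of \emph{all} finite branch values, and induct on the maximal degree of the non-rational branch values. Your only bound is that the critical values of $p_0$ have degree at most $\deg p_0-1$. But $\deg p_0$ is the \emph{sum} of the degrees, so nothing stops the measure from increasing. For example, suppose the non-rational branch values are $\pm\sqrt2$ and $1\pm\sqrt3$. Then $p_0=(x^2-2)(x^2-2x-2)$, and $p_0'=2(2x^3-3x^2-4x+2)$ is irreducible over $\Q$. For a root $c$ one gets $p_0(c)=(-11c^2+8c+18)/4$, a cubic irrationality, so the maximal degree rises from $2$ to $3$. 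Putting the rational branch values into $p_0$ only inflates $\deg p_0$ further.

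The standard repair is as follows. Make the branch set $\Gal(\Qb/\Q)$-stable, and take $p_0=\prod(x-\alpha)$ over its \emph{non-rational} finite elements only, so $p_0\in\Q[x]$ has degree $N$, their number. Induct on $N$. The map $p_0$ sends these values to $0$ and the rational ones into $\Q$, and its critical values form a Galois-stable set of at most $N-1$ elements, so $N$ strictly drops. Your second stage is fine once phrased correctly. Normalize \emph{four} of the rational branch values to $0,1,\infty,\lambda$ (not the whole branch set); the others stay rational under $h$, so the count drops by one.

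In the hard direction, the finite $\Aut(\C)$-orbit and the claim that the field of moduli is a number field are correct. However, your specialization route uses neither, and its load-bearing step (``the family is continuous, hence isotrivial'') is only asserted. Here is what is needed.
\begin{enumerate}
\item Spread $(X,\beta)$ out over a nonempty open $V'\subset V$ on which the fibres are smooth, of degree $d$, and branched only over $0,1,\infty$.
\item The given pair is the fibre over the $\C$-point $v_0$ determined by $K\subset\C$. This point lies in every nonempty open set defined over $\Qb$.
\item Over $(\Pj^1\setminus\{0,1,\infty\})\times V'(\C)$ the family is a finite covering space. Take a path $\gamma$ in the connected space $V'(\C)$ from $v_0$ to some $v_1\in V'(\Qb)$. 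Homotopy invariance of coverings over $(\Pj^1\setminus\{0,1,\infty\})\times\gamma$ makes the two fibres isomorphic as topological covers.
\item By the Riemann existence theorem they are then isomorphic Bely\u{\i} pairs.
\end{enumerate}
This makes the finite-orbit step superfluous.

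Your Weil-descent alternative does not work as stated. Weil's criterion applies to finite (or profinite Galois) extensions such as $\Qb/M$, not to the transcendental extension $\C/M$. It therefore presupposes the passage from $\C$ to $\Qb$, which is exactly what is at issue.
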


Everything begins with this striking and deep theorem. As a consequence of this result, for any compact Riemann surface
defined over a number field, one can choose a suitable function $\B$ (called a \emph{\belyi function}) such that all its
critical values lie within three points. 

By applying an appropriate M\"{o}bius transformation, these three points
can be taken to be $0$, $1$, and $\infty$.
For such a pair $(X, \B)$ (called a \emph{\belyi pair}), one can draw a bipartite graph called
a \emph{\dde} (child's drawing), or simply a \emph{dessin} (see \defref{def:dessin}).

The dessin for $(X, \B)$ is drawn on an orientable surface of the same genus as $X$, where the black vertices ($\bullet$)
 and white vertices ($\circ$) represent $\B^{-1}(0)$ and $\B^{-1}(1)$, respectively\footnote{Some references
(such as \cite{Jones16}) represent $\B^{-1}(0)$ by white vertices and $\B^{-1}(1)$ by black vertices.
In this paper, we adopt the convention used in many classical references.}.
The faces --- that is, the connected components bounded by edges --- correspond to $\B^{-1}(\infty)$, and
the edges correspond to $\B^{-1}([0,1])$. Each face is homeomorphic to a disk.

By studying the properties of \ddes, one can combine insights from algebraic geometry and
combinatorics, including graph theory, to enrich both areas of analysis. This perspective also opens up
a range of possibilities for further applications.

Two examples of \ddes are shown in \figref{fig:dessins-ex}.
In the left example, $X = \Ch$ (of genus~$0$) and $\B = -z^{4}(3z-5)/(5z-3)$, hence the dessin is drawn on a sphere.
The preimage $\B^{-1}(0)$ consists of the points $0$, $5/3$ with valencies $4$ and $1$, respectively;
$\B^{-1}(1)$ consists of the points $1$, $(-2 \pm \sqrt{5} i)/3$ with valencies $3$, $1$, and $1$;
and $\B^{-1}(\infty)$ consists of the points $3/5$, $\infty$ with valencies $1$ and $4$.

In the right example, $X$ is the curve defined by $x^{3}+y^{3}=z^{3}$
(the Fermat curve of degree~$3$ and genus~$1$) in $\Pj^{2}_{\C}$, and $\B = x^{3}/z^{3}$.
Let $\zeta_{3}$ denote a primitive cube root of unity.
This dessin has three faces corresponding to the points
$[1,-1,0]$, $[1,-\zeta_{3},0]$, and $[1,-\zeta_{3}^{2},0]$.

\begin{figure}[htbp]
\centering
\includegraphics[width=150mm]{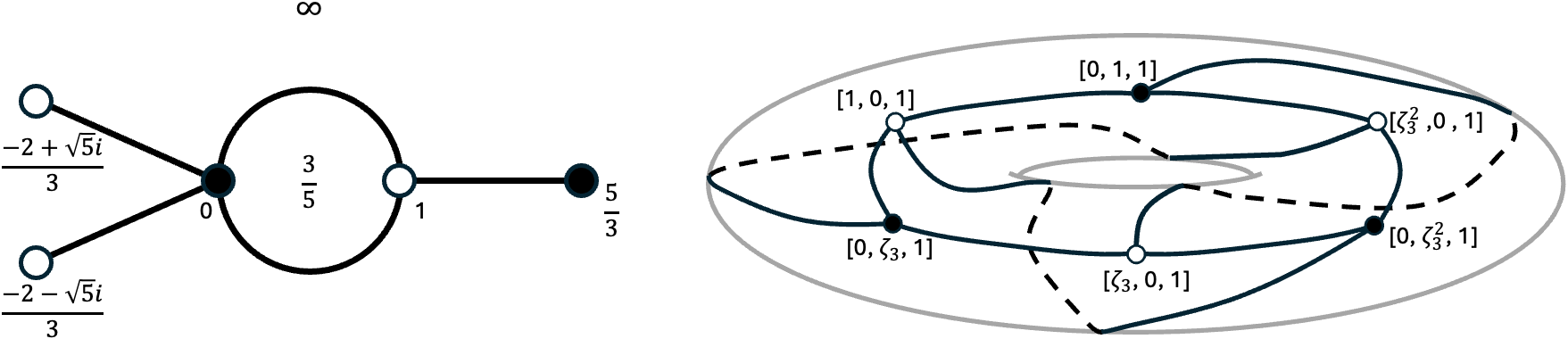} 
\caption{Examples of \ddes}
\label{fig:dessins-ex}
\end{figure}

The number of edges of a dessin coincides with the degree of the \belyi function.
Note that the degree of a \belyi function is defined as the number of points in the preimage of
a non-critical value; this does not necessarily coincide with the degree of $\B$
when it is regarded as a rational function.
In fact, in the right example above, the degree of the \belyi function is $9$, whereas the degree of $\B$
as a rational function is $3$.

Two dessins $\msD$ and $\msD'$, corresponding respectively to the \belyi pairs
$(X, \B)$ and $(X', \B')$, are said to be \emph{isomorphic} if there exists
a homeomorphism $\varphi \colon X \to X'$ such that $\B' \circ \varphi = \B$.
Equivalently, $\msD$ and $\msD'$ are isomorphic if there exists an orientation-preserving homeomorphism
between $X$ and $X'$ that induces an isomorphism of the embedded bipartite graphs, preserving vertex colors and
the cyclic order of incident edges at each vertex.

For a \dde, one can define an action of the absolute Galois group
$\mbG = \Gal(\Qb/\Q)$ \cite[4.2.1]{Jones16}.
Given a dessin $\msD$ associated with a \belyi pair $(X, \B)$ and an element
$\sigma \in \mbG$, we denote by $(X^{\sigma}, \B^{\sigma})$ the pair obtained by
the action of $\sigma$ on $X$ and $\B$.
We then denote by $\msD^{\sigma}$ the dessin corresponding to the \belyi pair
$(X^{\sigma}, \B^{\sigma})$.

Although $\msD^{\sigma}$ has the same number of edges and the same \emph{passport}
(\ie the same list of ramification indices) as $\msD$, the incidence relations between vertices may change.
Consequently, $\msD^{\sigma}$ need not be isomorphic to $\msD$ as a dessin.

The action of $\mbG$ on the set of all dessins is faithful; that is,
for any two distinct elements of $\mbG$, there exists a dessin
whose images under these elements lie in distinct isomorphism classes.
Therefore, the study of Galois orbits of dessins provides a powerful tool for investigating the structure of the absolute Galois group.

%%%%%%%%%%%%%%%%%%%%%%%%%%%%%%%%%%%%
\subsection{Regularity and Automorphism Groups}

\label{sec:regaut}
Two fundamental invariants (up to isomorphism) of a \dde with respect to the Galois action are its regularity and its automorphism group.
A dessin is said to be \emph{regular} if its \emph{monodromy group} (see \defref{def:monog}) acts
regularly (that is, freely and transitively) on the set of edges.
From a geometric point of view, the \emph{automorphism group} of a dessin is the group of deck transformations of the
associated \belyi covering.
Equivalently, it can be identified with the centralizer of the monodromy group in the symmetric group acting on the edges.

By studying regularity and automorphism groups, one can gain insight into the symmetry properties of a \dde.
The order of the automorphism group always divides the number of edges of the dessin, and the equality of these two numbers is equivalent to the dessin being regular.
Thus, the structure of the automorphism group provides a precise measure of the symmetry exhibited by the dessin.

\figref{fig:regular} shows two dessins with the same passport; that is, they have the same valency list:
each has $8$ edges, two black vertices of valency $4$, four white vertices of valency $2$, and two faces of valency $4$.
Although both dessins may appear highly symmetric at first glance, the left dessin is regular, whereas the right one is not.
The automorphism group of the left dessin has order~$8$, while that of the right dessin has order~$4$.

\begin{figure}[htbp]
\centering
\includegraphics[width=135mm]{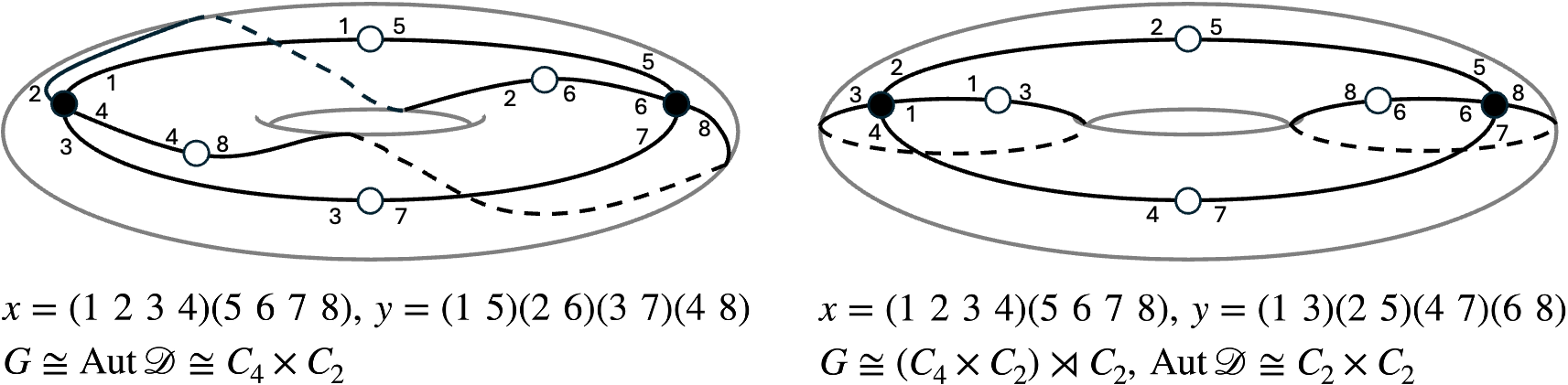} 
\caption{Regular and non-regular dessins with the same passport}
\label{fig:regular}
\end{figure}

Thus, regularity and automorphism groups may differ among dessins with the same passport.
However, they are invariant under the action of the absolute Galois group.
They therefore play an essential role in the study of families of dessins lying in the same Galois orbit.

Furthermore, regularity and automorphism groups have a significant impact on the relationship between
the field of moduli and the field of definition.

For a \dde $\msD$ corresponding to a \belyi pair $(X, \B)$, the \emph{field of moduli} $M(\msD)$
is defined as the fixed field of the subgroup
$G(\msD) = \{ \sigma \in \mbG \mid \msD \cong \msD^{\sigma} \} \le \mbG$.
That is, $M(\msD)$ is the subfield of $\Qb$ consisting of those elements that are fixed by every automorphism
$\sigma$ for which the conjugate dessin $\msD^{\sigma}$ is isomorphic to $\msD$.

A number field $K$ is called a \emph{field of definition} of $\msD$ if both $X$ and $\B$ can be defined over $K$.
Unlike the field of moduli, a field of definition of $\msD$ is in general not unique,  and there need not exist
a smallest field of definition.

The field of moduli depends only on the isomorphism class of $\msD$ and is contained in any field of definition of $\msD$.
For most \ddes, the field of moduli is also a field of definition; that is, $X$ and $\B$ can be defined over $M(\msD)$.
However, this is not always the case.
In such situations, there is no field of definition of $\msD$ that is fixed by all $\sigma \in G(\msD)$.

In the study of \ddes, the relationship between the field of moduli and fields of definition is strongly
influenced by both the genus and the combinatorial structure of the dessin.
If a dessin has a trivial automorphism group, then its field of moduli is necessarily a field of definition,
independently of the genus or the passport. This follows from rigidity of descent, which forces the
associated Weil cocycle to be trivial.

The same conclusion holds for regular dessins, that is, for dessins whose associated \belyi functions are Galois coverings.
In this case, the large symmetry encoded in the automorphism group allows the descent data to be normalized,
so that the dessin descends to its field of moduli.
Consequently, regular dessins do not give rise to genuine descent obstructions.

By contrast, when a dessin has a nontrivial automorphism group but is not regular, the behavior of the
field of moduli becomes more subtle. As shown by Cueto~\cite{Cueto14}, in this intermediate situation the field of moduli
need not be a field of definition. From a cohomological viewpoint, the obstruction to descent is measured by
a Weil cocycle with values in the automorphism group, and explicit examples arise precisely when this cocycle
represents a nontrivial class.

In this paper, we develop methods for analyzing how regular dessins and dessins with trivial
automorphism groups are distributed as the genus varies.
These results contribute to a deeper understanding of the relationship between the field of moduli and the field of definition.

%%%%%%%%%%%%%%%%%%%%%%%%%%%%%%%%%%%%
\subsection{Uniform Passports and Dessins}

\label{sec:uniform}
A \dde is said to be \emph{uniform} if the valencies of black vertices, white vertices, and faces are each constant.
One also says that it has a uniform passport (or uniform valency list).

Uniform dessins exhibit a high degree of symmetry. However, while every regular dessin is uniform, the converse does not hold: a uniform dessin need not be regular. In other words, uniformity does not represent the highest possible level of symmetry.

As an example, consider uniform passports $[a^{p}, b^{q}, c^{r}]$, where $n = pa = bq = rc$, and suppose that $n = 6$.
By symmetry among black vertices, white vertices, and faces, we may assume $c \ge a \ge b$ (equivalently, $r \le p \le q$).

In genus~$0$, the uniform passports are $[6, 1^{6}, 6]$ and $[2^{3}, 2^{3}, 3^{2}]$.
The corresponding dessins are shown in \figref{fig:genus0-n6}.
In both cases, the dessins are regular.

\begin{figure}[htbp]
\centering
\includegraphics[width=60mm]{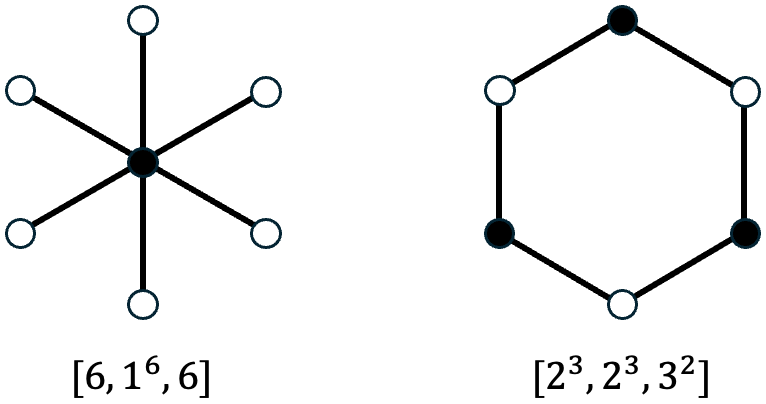} 
\caption{Uniform \ddes of genus~0, degree 6}
\label{fig:genus0-n6}
\end{figure}

In genus~1, the uniform passports are $[3^{2}, 2^{3}, 6]$ and $[3^{2}, 3^{2}, 3^{2}]$, and the corresponding dessins are shown in \figref{fig:genus1-n6}.
The dessin on the left is regular, whereas the one on the right is not.
The automorphism group of the right-hand dessin has order~2, which is strictly smaller than 6.

\begin{figure}[htbp]
\centering
\includegraphics[width=135mm]{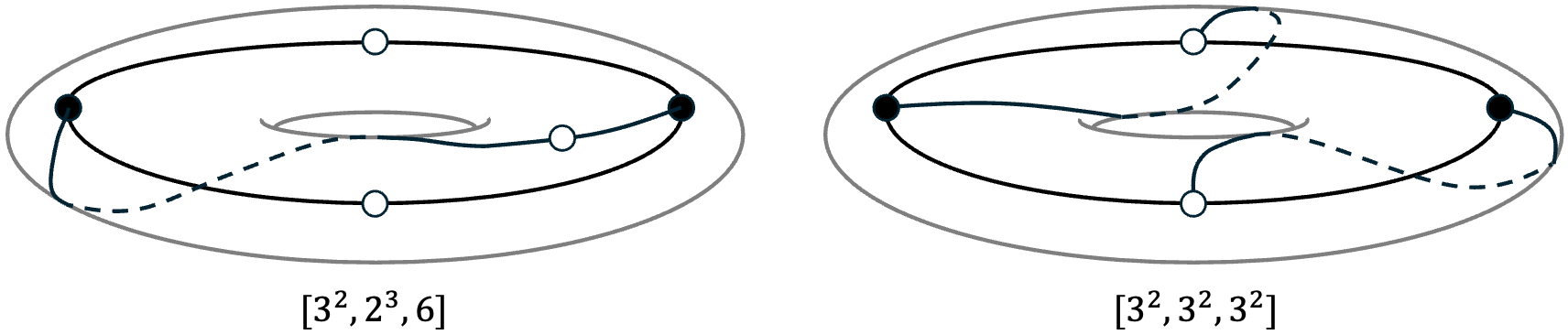} 
\caption{Uniform \ddes of genus~1, degree 6}
\label{fig:genus1-n6}
\end{figure}

For genus at least~2, the only uniform passport is $[6, 3^{2}, 6]$, which has genus~2.
There are four dessins with this passport, shown in \figref{fig:genus2-n6}.
The orders of their automorphism groups are 6, 3, 2, and 1, respectively, from the upper left to the lower right.
Only the upper-left dessin is regular; the others are non-regular, and the lower-right dessin has a trivial automorphism group.

\begin{figure}[htbp]
\centering
\includegraphics[width=135mm]{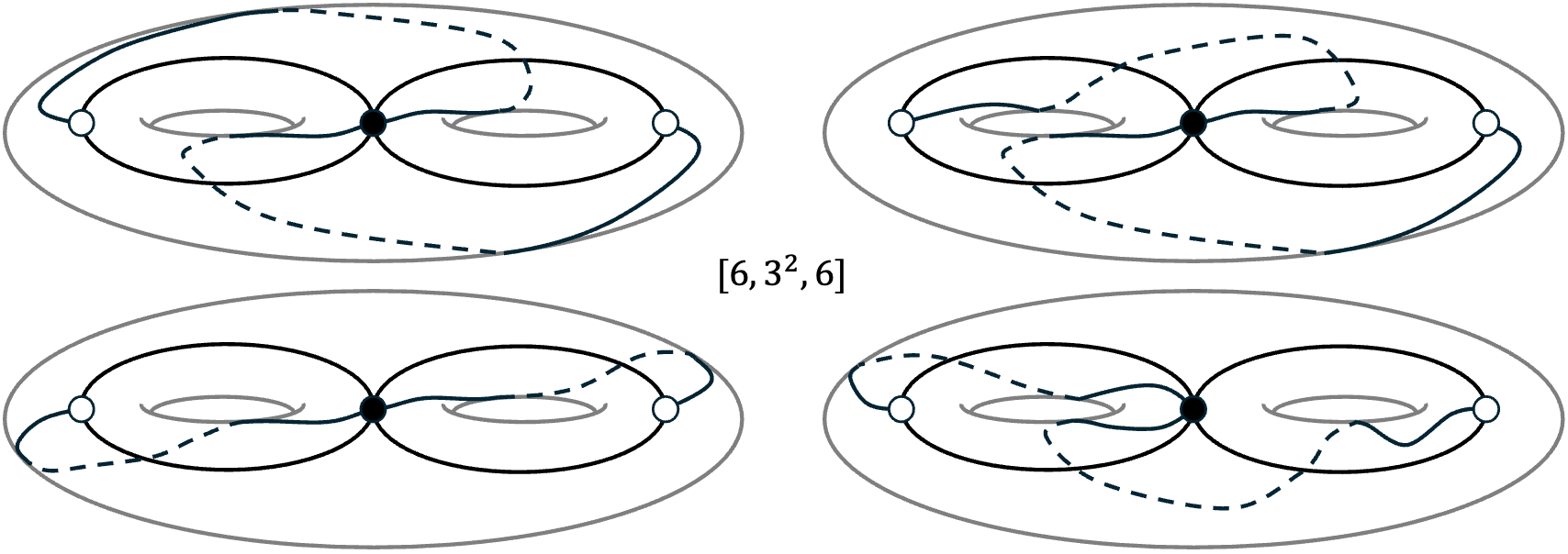} 
\caption{Uniform \ddes of genus~2, degree 6}
\label{fig:genus2-n6}
\end{figure}

%%%%%%%%%%%%%%%%%%%%%%%%%%%%%%%%%%%%
\subsection{Related Work}

The study of regularity and automorphism groups of \ddes has been approached from several complementary perspectives, including group-theoretic constructions, topological realization results, and arithmetic considerations, particularly in the presence of uniform or highly symmetric combinatorial data.
We briefly recall some relevant results in these directions.

Jones~\cite{Jones14} shows how, for a finite group $G$, regular dessins with automorphism group $G$
can be enumerated, represented as quotients of a single regular dessin $U(G)$,
and analyzed under the action of certain hypermap operations.
Examples for several classes of groups, such as the cyclic group $C_{n}$, the dihedral group $D_{n}$,
the symmetric group $S_{n}$, the alternating group $A_{n}$, and certain non-abelian finite simple groups, are
provided.

Complementarily, Hidalgo~\cite{Hidalgo19} proves that for every possible action of a finite group $G$
by orientation-preserving homeomorphisms on a closed orientable surface of genus $g \ge 2$,
there exists a dessin whose automorphism group is isomorphic to $G$
and which realizes the corresponding topological action of $G$.

Girondo, Torres-Teigell, and Wolfart~\cite{Girondo14} investigate the arithmetic and moduli-theoretic properties of
uniform dessins on quasiplatonic surfaces, that is, curves admitting a regular dessin.
In the arithmetic quasiplatonic setting they consider, they show that the field of moduli of a quasiplatonic curve
is remarkably stable: not only does the curve itself descend to its field of moduli, but the same field also serves as
the field of moduli for all uniform non-regular dessins arising from non-normal inclusions of triangle groups on that
curve.
Moreover, under mild additional geometric conditions, these dessins can in fact be defined over this field.

Horie~\cite{Horie24} studies the enumeration of equivalence classes of dessins whose automorphism groups have
a prescribed order~$r$. The focus is on dessins with two vertices, which necessarily have
passports (valency lists) of the form $[n^{1}, n^{1}, \lambda]$ with $\lambda \vdash n$.
Explicit enumeration formulas are obtained for two cases:
(1) dessins with $L$ faces, that is, with $l(\lambda)=L$;
and (2) dessins with $h$ faces of degree~$2$.

Taken together, these works demonstrate that regular dessins and their automorphism groups are highly constrained and
well understood from both group-theoretic and topological viewpoints, while uniformity alone does not control
regularity, descent, or symmetry.

%%%%%%%%%%%%%%%%%%%%%%%%%%%%%%%%%%%%
\subsection{Purpose and Main Results}

By computing automorphism groups of uniform \ddes and examining their regularity, one observes
a tendency for the proportion of regular dessins to decrease as the genus increases.

This naturally leads to the following questions:
\HL

For a given uniform passport,
\begin{itemize}
\item Under what conditions do regular \ddes exist, and how many are there?
\item How are the automorphism groups distributed?
\item How does this distribution change with the genus?
\end{itemize}
\HL

In response to these questions, we expect the situation to be as summarized in \tabref{tab:genusautd}, where $n$ denotes
the number of edges of the dessin.

\begin{table}[htbp]
  \centering
\begin{tabular}{|l|c|c|c|}
\hline
Genus & $\Aut \msD \cong \{ 1 \}$ & $1 < \lvert \AD \rvert < n$ & $\lvert \AD \rvert = n$ (regular) \\
\hline
0 & -- \textsuperscript{\ref{itm:g0reg}} & -- \textsuperscript{\ref{itm:g0reg}} & $\checkmark$\textsuperscript{\ref{itm:g0reg}} \\ \hline
1 (with an $(n)$-cycle) & -- \textsuperscript{\ref{itm:g1triv}}  & -- \textsuperscript{\ref{itm:g1nreg}} & $\checkmark$\textsuperscript{\ref{itm:anyg}} \\ \hline
1 (without an $(n)$-cycle) & -- \textsuperscript{\ref{itm:g1triv}}  & $\checkmark$\textsuperscript{\ref{itm:g1nreg}} & $\diamond$ \\ \hline
$\ge 2$ (with $\ge 2$ $(n)$-cycles) & $\checkmark$\textsuperscript{\ref{itm:g2triv}} & $\diamond$ & $\checkmark$\textsuperscript{\ref{itm:anyg}} \\ \hline
$\ge 2$ (with $\le 1$ $(n)$-cycle) & $\checkmark$* & $\checkmark$* & $\diamond$ \\
\hline
\end{tabular}\\
$\checkmark$: always occurs,\quad --: never occurs,\quad $\diamond$: depends on the passport.
\vspace{1\baselineskip}
\caption{Distribution of automorphism groups for uniform passports}
\label{tab:genusautd}
\end{table}

Asterisks (*) indicate statements that are conjectural at the present stage.
The numbers in parentheses refer to the corresponding items listed below.

\begin{enumerate}
\item\label{itm:g0reg} Genus~0: all uniform dessins are regular and have automorphism groups of order~$n$,
isomorphic to their monodromy groups.
\item\label{itm:g1nreg} Genus~1: a uniform passport admits a non-regular dessin if and only if it does not contain
an element of type $n^{1}$ (that is, it does not correspond to a tree).
\item\label{itm:g1triv} Genus~1: no uniform passport admits a dessin with a trivial automorphism group.
\item\label{itm:g2triv} Genus $\ge 2$: every uniform passport admits a dessin with a trivial automorphism group, and hence admits
a non-regular dessin.
Moreover, if the passport contains at most one cycle of type $(n)$, then it also admits a non-regular dessin with
a nontrivial automorphism group.
\item\label{itm:anyg} Any genus: a passport of the form $[a^{p}, b^{q}, n]$, or any permutation thereof in the tree case, admits a regular dessin if and only if $\gcd(p, q) = 1$.
\end{enumerate}

The main contribution of this paper is to provide proofs, in the following form, for statement~\ref{itm:anyg}
and for certain cases of statement~\ref{itm:g2triv}.
\HL

%%% Theorem for the regularity of the trees
\noindent
\textbf{\thmref{thm:trees}.}
\textit{\thmtrees}
\HL

%%% Theorem for the automorphism groups in $g \ge 2$
\noindent
\textbf{\thmref{thm:gge2}.}
\textit{\thmggetwo}
\HL

For the proof of \thmref{thm:gge2}, we make use of the relationship between the primitivity of the monodromy group
of a dessin and the triviality of its automorphism group. This approach is useful for investigating the distribution
of dessins with low symmetry.

From the perspective of the field of moduli and the field of definition, the above results broaden the range of cases
in which the field of moduli can be shown to be a field of definition. At the same time, they help to narrow down and
identify the cases in which this coincidence fails.

In addition, we provide proofs of \ref{itm:g0reg} (\textbf{\propref{prop:reggenus0}}),
\ref{itm:g1nreg} (\textbf{\propref{prop:reggenus1}}), and \ref{itm:g1triv} (\textbf{\propref{prop:autgenus1}}),
as well as a proposition concerning regularity based on properties of abstract groups (\textbf{\propref{prop:absgroup}}).
Although the proofs of these propositions are straightforward,
we could not find them explicitly stated in the literature. For completeness, we therefore provide proofs here.

We also establish the following two theorems concerning enumeration in symmetric groups, which are needed
to determine the distribution of dessins with trivial automorphism group.
\HL

In the following theorem, let $T(b, q)$ denote the number of permutations in the symmetric group $S_{n}$
with cycle type $(b^{q})$,
and let $N(b, q)$ denote the number of permutations $y \in S_{n}$ of cycle type $(b^{q})$ such that $xy$ has
cycle type $(n)$.
\HL

\noindent
\textbf{\thmref{thm:MN}.}
\textit{\thmNT}
\HL

\noindent
\textbf{\thmref{thm:imbq}.}
\textit{\thmITa}
\begin{align}
\thmITbit
\end{align}
\HL

For future work, two directions remain.
First, to establish the conjectural statements summarized in Table~\ref{tab:genusautd}.
This includes, in particular, proving the existence of dessins with trivial automorphism groups
for other uniform passports of genus at least~$2$.
Second, to obtain more quantitative results on regularity and automorphism groups in general.

%%%%%%%%%%%%%%%%%%%%%%%%%%%%%%%%%%%%%%%%%%%%%%%%
\section{Preliminaries}

The definitions and results in this chapter are based primarily on \cite{Jones16} and \cite{Adrianov20}.

\begin{definition}[\Dde]\cite[Definition~2]{Jones16}
\label{def:dessin}
A \emph{\dde}, or simply a \emph{dessin}, is a map consisting of a connected, finite, bipartite graph embedded
in a connected, compact, oriented surface without boundary.
Here, a bipartite graph is a graph whose vertices can be colored black and white in such a way that each edge
joins a black vertex to a white vertex.
\end{definition}

Since a compact Riemann surface provides a suitable surface on which a \dde can be embedded,
one can draw a dessin corresponding to a \belyi pair $(X, \B)$, as described in
Section~\ref{sec:belyi}.

\HL

A \emph{partition} $\la$ of a positive integer $n$, denoted by $\la \vdash n$, is a multiset of
positive integers whose sum is $n$, where the order of the parts is irrelevant.
 
\begin{definition}[Passport of a dessin]\cite[Definition~2.10]{Adrianov20}
\label{def:passport}
Let $n$ be the number of edges of a \dde.
The triple $[\lambda_{0}, \lambda_{1}, \lambda_{\infty}]$ of partitions $\lambda_{0}, \lambda_{1}, \lambda_{\infty} \vdash n$,
which correspond respectively to the valencies of the black vertices ($\B^{-1}(0)$), the white vertices ($\B^{-1}(1)$), and
the faces ($\B^{-1}(\infty)$) of the dessin, is called a $\emph{passport}$ of the dessin.
\end{definition}

The dessins in \figref{fig:dessins-ex} have passports $[41, 311, 41]$ and $[3^{3}, 3^{3}, 3^{3}]$,
respectively.
Here the symbols $41$, $311$, and $3^{3}$ denote the partitions $(4,1)$, $(3,1,1)$, and $(3,3,3)$,
respectively.

Since $\lambda_{0}, \lambda_{1}, \lambda_{\infty}$ are partitions of $n$, we have

\begin{align}
\label{eq:lambdan}
\lvert \lambda_{0} \rvert = \lvert \lambda_{1} \rvert = \lvert \lambda_{\infty} \rvert = n,
\end{align}
where $\lvert \la \rvert$ denotes the sum of the parts of the partition $\la$. 

The numbers of vertices, faces, and edges are $l(\lambda_{0}) + l(\lambda_{1})$,
$l(\lambda_{\infty})$, and $n$, respectively, where $l(\la)$ denotes the number of parts
of the partition $\la$.
Therefore, the genus $g$ of the underlying curve $X$ satisfies

\begin{align}
l(\lambda_{0}) + l(\lambda_{1}) + l(\lambda_{\infty}) - n = 2 - 2g,
\end{align}
and hence

\begin{align}
\label{eq:lambdag}
g = \f{n - (l(\lambda_{0}) + l(\lambda_{1}) + l(\lambda_{\infty}))}{2} + 1.
\end{align}
Since $g$ is a non-negative integer, it follows that

\begin{align}
\label{eq:lambdal}
l(\lambda_{0}) + l(\lambda_{1}) + l(\lambda_{\infty}) \le  n + 2, \q
l(\lambda_{0}) + l(\lambda_{1}) + l(\lambda_{\infty}) \equiv n \npmod{2}.
\end{align}

Note that not every triple of partitions satisfying \eqref{eq:lambdan} and \eqref{eq:lambdal}
necessarily corresponds to a \dde.
For example, although $[2^{2}, 2^{2}, 31]$ formally satisfies these conditions and would yield genus~$0$,
there exists no dessin with this passport.
Similarly, the passport $[3^{2}, 3^{2}, 42]$, which would correspond to genus~$1$, admits no dessin.

These facts can be proved by showing that there exists no corresponding monodromy group
(see \defref{def:monog}) in each case.

\begin{definition}[Uniform passports and dessins]\cite[Remark~3.2]{Jones16}
The passport of a \dde given by
$[\lambda_{0}, \lambda_{1}, \lambda_{\infty}] = [a_{1}\cdots a_{p},\ b_{1}\cdots b_{q},\ c_{1}\cdots c_{r}]$
is called \emph{uniform} if
\begin{align}
a_{1} = \cdots = a_{p}, \quad b_{1} = \cdots = b_{q}, \quad c_{1} = \cdots = c_{r},
\end{align}
\ie if the passport takes the form $[a^{p}, b^{q}, c^{r}]$.

A \dde is called \emph{uniform} if it has a uniform passport.
\end{definition}

For a uniform passport $[a^{p}, b^{q}, c^{r}]$ with $n = pa = qb = rc$ and the genus~$g$ of the
underlying curve $X$ corresponding to a dessin with this passport, by \eqref{eq:lambdan} we have

\begin{align}
\label{eq:genus-g}
p + q + r - n = 2 - 2g,
\end{align}
and hence

\begin{align}
\label{eq:genus}
g = \f{n-(p+q+r)}{2} + 1.
\end{align}

\begin{definition}[Monodromy group of a \dde]\cite[2.1.1]{Jones16}
\label{def:monog}
Define two permutations $x$ and $y$ acting on the set of edges $E$ of a \dde $\msD$ as follows.
For each edge $e \in E$, define $x \cdot e$ and $y \cdot e$ to be the next edges around the unique black vertex and
the unique white vertex incident to $e$, respectively, following the counterclockwise orientation.

The \emph{monodromy group} of $\msD$ is the subgroup $G = \langle x, y \rangle$ generated by $x$ and $y$ in
the symmetric group $\Sym(E)$ of all permutations of $E$.
\end{definition}

Since a \dde is connected, it follows that any edge in $E$ can be mapped to any other edge by the action of $G$.
Therefore, the monodromy group $G$ acts transitively on $E$.

\begin{rem}
Throughout this paper, permutations act on the left, and products are composed from right to left,
\ie $(xy) \cdot e = x \cdot (y \cdot e)$ for an edge $e$.
\end{rem}

Another important observation concerning the monodromy group is that, in addition to $x$ and $y$ encoding
the cycles of the black and white vertices, respectively, the permutation $z = (xy)^{-1}$ encodes the cycles
 corresponding to the faces.
In fact, for each face, half of the edges incident to it form a cycle of $z$, while the remaining edges belong to cycles of
$z$ corresponding to the neighboring faces.

It is known that if a group $G$ generated by two elements acts transitively on a set of
edges $E$, then there exists a \dde whose monodromy group is isomorphic to $G$\cite[Theorem~3.6]{Scodro24}.

Therefore, studying groups that act transitively is essential for investigating the properties of \ddes and, consequently, of algebraic curves.

\begin{definition}[Regular dessins]\cite[2.1.2]{Jones16}
\label{def:regular}
A \dde is called \emph{regular} if its monodromy group acts freely (that is, semiregularly) on the set of its edges.
\end{definition}

This implies that the monodromy group of a regular dessin acts freely and transitively --- hence regularly ---
on its edges.

Although we are not aware of a reference in which the following lemma is proved explicitly,
the proof is straightforward.

\begin{lemma}
\label{lem:order-n}
A \dde with $n$ edges is regular if and only if the order of its monodromy group is $n$.
\end{lemma}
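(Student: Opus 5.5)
The plan is to use the orbit--stabilizer relation for the transitive action of the monodromy group $G = \langle x, y \rangle \le \Sym(E)$ on the edge set $E$, where $\lvert E \rvert = n$. We already know from the remark following \defref{def:monog} that $G$ acts transitively on $E$. Fixing an edge $e \in E$, orbit--stabilizer gives
\begin{align}
\lvert G \rvert = \lvert E \rvert \cdot \lvert G_{e} \rvert = n \, \lvert G_{e} \rvert ,
\end{align}
where $G_{e}$ denotes the stabilizer of $e$. The whole proof reduces to comparing $\lvert G_{e} \rvert$ with $1$.

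For the forward direction, assume the dessin is regular in the sense of \defref{def:regular}, so $G$ acts freely on $E$. Then every stabilizer is trivial; in particular $G_{e} = \{1\}$, and the displayed identity gives $\lvert G \rvert = n$.

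For the converse, assume $\lvert G \rvert = n$. The identity forces $\lvert G_{e} \rvert = 1$ for the chosen edge $e$, but freeness requires every stabilizer to be trivial. This is where transitivity is used a second time. For any $e' \in E$ there is $g \in G$ with $e' = g \cdot e$, and stabilizers along an orbit are conjugate: $G_{e'} = g G_{e} g^{-1}$. Hence all stabilizers are trivial and $G$ acts freely, so the dessin is regular.

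Alternatively, orbit--stabilizer can be applied directly at each edge, since $E$ is a single orbit; either route works. No step is a genuine obstacle. The only point needing care is to invoke transitivity (connectedness of the dessin) explicitly, so that triviality of one stabilizer gives triviality of all stabilizers.
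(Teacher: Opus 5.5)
Your proof is correct and is essentially the paper's argument. The paper fixes $e$ and counts the elements of $G$ sending $e$ to each $e'$: at least one by transitivity, and at most one by freeness. This is orbit--stabilizer done by hand. For the converse, the paper applies that count at every edge instead of conjugating stabilizers, which is exactly your alternative route.
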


\begin{proof}
Let $\msD$ be a \dde with $n$ edges.
Denote the set of edges by $E$ and the monodromy group by $G$.
Since $G$ acts transitively on $E$, for any $e \in E$ and for any $e' \in E$, there exists at least one element of $G$
sending $e$ to $e'$, and hence $\lvert G \rvert \ge n$.

When $\msD$ is regular, since $G$ acts freely on $E$, for each $e, e' \in E$ there is at most one element of $G$
sending $e$ to $e'$, hence $\lvert G \rvert \le n$.
Therefore, in this case we have $\lvert G \rvert = n$.

Conversely, when $\lvert G \rvert = n$, for each $e, e' \in E$ there is exactly one element of $G$ sending $e$ to $e'$,
hence $G$ acts freely on $E$, which means that $\msD$ is regular.
\end{proof}

\begin{definition}[Automorphism group of a \dde]\cite[2.1.2]{Jones16}
For a \dde $\msD$, we define its \emph{automorphism} to be a permutation of the set $E$ of $\msD$ which preserves the
cyclic order of edges around each vertex, that is, which commutes with $x$ and $y$, or equivalently, commutes with $G$.
Thus we can define an \emph{automorphism group} of $G$ as the centralizer:

\begin{align}
\Aut \msD \ceq C_{\Sym(E)}(G) &= \{ c \in \Sym(E) \mid cg = gc \text{ for all } g \in G \} \\
&= \{ c \in \Sym(E) \mid cx = xc,\ cy = yc \}.
\end{align}

When $\Aut \msD \cong \{1 \}$, it is said that $\msD$ has a \emph{trivial} automorphism group.
\end{definition}

Since $\langle x, y \rangle = \langle x, z \rangle = \langle y, z \rangle$, where $z = (xy)^{-1}$,
both the monodromy group and the automorphism group are invariant under any permutation of black vertices,
white vertices, and faces.

The automorphism group of a \dde has the following properties:

\begin{itemize}
\item $\AD$ acts freely on the edges of $\msD$.
\item $\lvert \AD \rvert$ divides the number of edges.
\item If $\lvert \AD \rvert$ equals the number of edges, then $\AD \cong G$.
\end{itemize}

The following proposition is a basic result describing the relationship between regularity and automorphism groups.

\begin{prop}
\label{prop:regaut}
A \dde $\msD$ is regular if and only if its monodromy group $G$ is isomorphic to $\AD$.
\end{prop}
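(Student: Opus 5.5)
The plan is to work entirely with the permutation-group description: $G=\langle x,y\rangle\le\Sym(E)$ is transitive, and $\AD=C_{\Sym(E)}(G)$. Both implications will be read off from Lemma~\ref{lem:order-n} together with the listed properties of $\AD$: it acts freely, its order divides $n=\lvert E\rvert$, and it is isomorphic to $G$ when its order is $n$.

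For the forward direction, assume $\msD$ is regular. Lemma~\ref{lem:order-n} gives $\lvert G\rvert=n$. We then show $\lvert\AD\rvert=n$ directly. Fix an edge $e_0$; by regularity, every edge is $g\cdot e_0$ for a unique $g\in G$, which identifies $E$ with $G$ and turns the action of $G$ into left multiplication. Right multiplications $\rho_h\colon g\cdot e_0\mapsto gh\cdot e_0$ commute with left multiplications, so they lie in $\AD$, and there are $n$ distinct ones. Combined with $\lvert\AD\rvert\le n$ from freeness, this gives $\lvert\AD\rvert=n$. The third listed property then yields $\AD\cong G$. (The map $h\mapsto\rho_{h^{-1}}$ also exhibits the isomorphism explicitly.)

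For the converse, assume $G\cong\AD$. Then $\lvert G\rvert=\lvert\AD\rvert$. The order of $\AD$ divides $n$, so $\lvert G\rvert\le n$. Transitivity gives $\lvert G\rvert\ge n$, as in the proof of Lemma~\ref{lem:order-n}. Hence $\lvert G\rvert=n$, and Lemma~\ref{lem:order-n} shows that $\msD$ is regular.

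The only step with real content is the forward direction's construction of $n$ automorphisms, that is, checking that right multiplication is well defined on $E$ and commutes with $x$ and $y$. Freeness of the $G$-action makes this routine. The converse reduces purely to cardinalities, since any abstract isomorphism forces equal orders.
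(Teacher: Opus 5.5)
Your proof is correct. The paper gives no argument of its own here; it only cites \cite[Theorem~2.1]{Jones16}. The standard mechanism behind such statements is the identification of the centralizer of a transitive group, $C_{\Sym(E)}(G)\cong N_{G}(G_{e})/G_{e}$, where $G_{e}$ is an edge stabilizer. From it, regularity ($G_{e}=1$) gives $\AD\cong G$, and conversely $\lvert N_{G}(G_{e})/G_{e}\rvert=\lvert G\rvert$ forces $G_{e}=1$.

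Your forward direction is precisely the case $G_{e}=1$ of that identification, done by hand. Your map $h\mapsto\rho_{h^{-1}}$ is the general map $h\mapsto(gG_{e}\mapsto gh^{-1}G_{e})$ specialized to trivial stabilizer. Your converse replaces the normalizer formula by the pure count $n\le\lvert G\rvert=\lvert\AD\rvert\le n$, followed by \lemref{lem:order-n}.

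What your route buys is self-containment. You need only \lemref{lem:order-n} and freeness of $\AD$, which is a one-line check: if $c\in\AD$ fixes $e$, then $c(g\cdot e)=g\cdot c(e)=g\cdot e$ for all $g\in G$, so $c=\id$ by transitivity. For the same reason, make the explicit isomorphism $h\mapsto\rho_{h^{-1}}$ the actual argument rather than appealing to the third bulleted property. The map is an injective homomorphism, hence onto because $\lvert\AD\rvert\le n=\lvert G\rvert$. That bullet is listed in the paper without proof and is most naturally obtained from the very proposition you are proving.

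What the cited route buys in return is a description of $\AD$ for non-regular dessins as well. The same normalizer mechanism, in the form $\AD\cong N_{\Delta}(\Delta_{e})/\Delta_{e}$, is what the paper later exploits in genus~$1$.
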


\begin{proof}
See \cite[Theorem~2.1]{Jones16}.
\end{proof}

This also implies that a \dde $\msD$ is regular if and only if $\lvert \AD \rvert$ equals the number of edges.

Note that regularity does not imply $G = \Aut \msD$;
these groups act on the edges of the dessin in different ways.

\begin{prop}
\label{prop:reguni}
If a \dde is regular, then it has a uniform passport.
\end{prop}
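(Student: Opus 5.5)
The plan is to use the characterization of regularity as a regular (free and transitive) action of the monodromy group $G=\langle x,y\rangle$ on the edge set $E$, with $|E|=n$.

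The key step is the standard fact that a transitive abelian-free action forces all stabilizers to be trivial. We then show that every element of $G$ acts on $E$ with all cycles of the same length. Fix $g\in G$ and an edge $e\in E$, and let $k$ be the length of the $\langle g\rangle$-orbit of $e$, so that $g^{k}\cdot e=e$. Since $G$ acts freely on $E$ by \defref{def:regular}, the stabilizer of $e$ is trivial, hence $g^{k}=1$. Conversely, if $g^{j}=1$ then certainly $g^{j}\cdot e=e$. Therefore the orbit length of $e$ under $\langle g\rangle$ equals the order of $g$ in $G$. This quantity does not depend on $e$, so every cycle of $g$ as a permutation of $E$ has length $\operatorname{ord}(g)$. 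In particular, $g$ has cycle type $(\operatorname{ord}(g))^{n/\operatorname{ord}(g)}$.

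We apply this to $g=x$, $g=y$ and $g=z=(xy)^{-1}$, all of which lie in $G$. By \defref{def:monog} and the remark following it, the cycles of $x$, $y$ and $z$ correspond to the black vertices, the white vertices and the faces, and their cycle lengths are the respective valencies. Each of $\lambda_{0},\lambda_{1},\lambda_{\infty}$ therefore has all parts equal, so the passport has the form $[a^{p},b^{q},c^{r}]$. This is uniform in the sense of the definition, with $a=\operatorname{ord}(x)$, $b=\operatorname{ord}(y)$ and $c=\operatorname{ord}(xy)$.

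There is no real obstacle here. The only point needing care is the identification of face valencies with cycle lengths of $z$, and the paper takes this as given in the remark after \defref{def:monog}.
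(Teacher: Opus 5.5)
Your proof is correct and is essentially the paper's argument run directly rather than by contraposition: the paper takes two black vertices of valencies $l<m$ and notes that $x^{l}\neq 1$ fixes the edges around the first one, contradicting freeness, which is exactly your observation that freeness forces every cycle of $x$ (and likewise of $y$ and $z$) to have length equal to the order of that element. The only blemish is the opening phrase ``transitive abelian-free action'', which is garbled and unnecessary, since trivial stabilizers are precisely the freeness built into the definition of a regular dessin.
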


\begin{proof}
A proof can be found in \cite[Proposition~4.42]{Girondo12}. For the reader's convenience, we include another proof.

We prove the contrapositive. Suppose that the dessin $\msD$ is not uniform.
Then $\msD$ has either two black vertices, two white vertices, or two faces whose valencies are different.

Since regularity is symmetric with respect to black vertices, white vertices, and faces, we may assume without loss of generality that these are black vertices. Let these vertices be $A$ and $B$, and let their valencies be $l$ and $m$, with $l < m$.

Under $x^{l}$, all the edges around $A$ are mapped to themselves, whereas the edges around $B$ are mapped to distinct edges. Hence $x^{l} \ne \id$, and there exist at least two distinct elements of $G$
that fix the edges around $A$. Therefore, $G$ does not act freely on $E$, and $\msD$ is not regular.
\end{proof}

The converse of this proposition does not hold in general.
This shows that regularity exhibits a higher degree of symmetry than uniformity.
This fact is one of the main topics of this paper.

\begin{exa}
The uniform passport $[4^{2}, 2^{4}, 4^{2}]$ (genus~1) corresponds to the two dessins shown in
Section~\ref{sec:regaut}, \figref{fig:regular}.
The dessin on the left is regular, and both its monodromy group and its automorphism group are
isomorphic to $C_{4} \times C_{2}$, the direct product of cyclic groups of orders 4 and 2, respectively.
In contrast, the dessin on the right is not regular; its monodromy group is isomorphic to $(C_{4} \times C_{2}) \rtimes C_{2}$
and has order~16, whereas its automorphism group has order~4 and is isomorphic to $C_{2} \times C_{2}$.
\end{exa}

We also recall the notions of primitive and imprimitive groups, which will play a role in investigating
when a dessin has a trivial automorphism group.

\begin{definition}[Block in a group action]\cite[1.5]{Dixon96}
\label{def:block}
Let $G$ be a group acting transitively on a set $E$. A nonempty subset $S$ of $E$ is called a \emph{block} for $G$
if for each $g \in G$ either $g\cdot S = S$ or $(g \cdot S) \cap\ S = \emptyset$.

Every group acting transitively on $E$ has $E$ and the singletons $\{ e \} \ (e \in E)$ as blocks. these are
called \emph{trivial} blocks. Any other block is called \emph{nontrivial}.
\end{definition}

In other words, a block is a subset that is always moved as a whole under the action of $G$.

\begin{definition}[Primitive and imprimitive groups]\cite[1.5]{Dixon96}
\label{def:prim}
Let $G$ be a group which acts transitively on a set $E$. $G$ is called \emph{primitive} if it has no nontrivial blocks on $E$;
otherwise $G$ is called \emph{imprimitive}.
\end{definition}

%%%%%%%%%%%%%%%%%%%%%%%%%%%%%%%%%%%%%%%%%%%%%%%%
\section{Estimates for the Symmetric Group}

In order to investigate the behavior of automorphism groups in genus at least~$2$ (see Section~\ref{sec:autgge2}),
we need to obtain estimates
for the number of permutations in the symmetric group $S_{n}$.
In this section, we prove two theorems (Theorems~\ref{thm:MN} and~\ref{thm:imbq}) that provide lower and upper bounds
for quantities related to monodromy groups.

%%%%%%%%%%%%%%%%%%%%%%%%%%%%%%%%%%%%
\subsection{Elements of Cycle Type $(b^{q})$}

\label{sec:tbq}
Let $x = \sigman \in S_{n}$ and assume that $n = bq$, where $b, q \in \Zp$.
We define the following subsets of $S_{n}$:

\begin{itemize}
\item $T(b, q)$: the set of permutations in $S_n$ with cycle type $(b^{q})$, that is, permutations consisting of
$q$ disjoint cycles of length $b$;
\item $N(b, q)$: the set of elements $y \in T(b, q)$ such that $xy$ has cycle type $(n)$;
\item $I(b, q)$: the set of elements $y \in T(b, q)$ such that $G = \langle x, y \rangle$ is imprimitive.
\end{itemize}

Note that the condition $y \in T(b, q)$ with $xy$ of cycle type $(n)$
is equivalent to requiring that $z = (xy)^{-1}$ has cycle type $(n)$,
since $xy$ and $(xy)^{-1}$ have the same cycle type.

In addition, for a divisor $m$ of $n$ with $2 \le m < n$, let $I_{m}(b, q)$ denote the subset of $I(b, q)$
consisting of those elements $y$ for which $G = \langle x, y \rangle$ is imprimitive with respect to the
partition of $\{1, \dotsc, n\}$ into residue classes modulo~$m$; that is, the residue classes modulo~$m$
form a system of blocks for $G$.

Hereafter, we use the same notation to denote the number of elements in a set (for example,
the number of elements of $T(b, q)$ is denoted by the function $T(b, q)$).

\begin{prop}
\label{prop:tbq}
The number of elements in $S_{n}$ of cycle type $(b^{q})$ $(n = bq)$ is given by

\begin{align}
T(b, q) = \f{n!}{b^{q}q!}.
\end{align}
\end{prop}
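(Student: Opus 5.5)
We will count permutations of cycle type $(b^{q})$ by a standard orbit--stabilizer (or overcounting) argument. First we count the ways of writing down such a permutation in a fixed template. Write the $n = bq$ symbols $1, \dotsc, n$ in some order into the template
\begin{align}
(\,\ast\ \cdots\ \ast\,)(\,\ast\ \cdots\ \ast\,)\cdots(\,\ast\ \cdots\ \ast\,),
\end{align}
consisting of $q$ pairs of parentheses, each containing $b$ slots. There are $n!$ such fillings, and each filling determines a permutation of cycle type $(b^{q})$. Conversely, every permutation of this cycle type arises from some filling.

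Next we determine how many fillings give the same permutation. Two fillings yield the same permutation exactly when one is obtained from the other by a combination of two operations:
\begin{itemize}
\item cyclically rotating the entries within each of the $q$ cycles, which gives $b$ choices per cycle and hence $b^{q}$ choices in total;
\item permuting the $q$ cycles among themselves, which gives $q!$ choices.
\end{itemize}
To make this rigorous, let the group $C_{b} \wr S_{q}$, of order $b^{q} q!$, act on the set of fillings. This action is free: a nontrivial element either moves some cycle to a different position, or rotates some cycle nontrivially. In both cases it changes the word, because the $n$ entries are distinct and $b \ge 1$. The orbits of this action are exactly the fibres of the map from fillings to permutations.

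Dividing the number of fillings by the orbit size gives
\begin{align}
T(b, q) = \f{n!}{b^{q} q!}.
\end{align}

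Equivalently, this is the size of the conjugacy class of type $(b^{q})$, namely $n!/\lvert C_{S_{n}}(y)\rvert$, where the centralizer $C_{S_{n}}(y) \cong C_{b} \wr S_{q}$ has order $b^{q}q!$. We could cite this as the standard formula $n!/\prod_{k} k^{m_{k}} m_{k}!$.

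The only point needing care is freeness when $b = 1$. In that case rotation within a cycle is trivial, since the cyclic group $C_{1}$ is trivial. The formula then correctly gives $n!/n! = 1$, namely the identity permutation. There is no genuine obstacle here.
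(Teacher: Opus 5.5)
Your proof is correct and takes essentially the same route as the paper. You count the $n!$ fillings of the cycle template and divide by the $b^{q}$ rotations within cycles and the $q!$ reorderings of cycles. Your free action of $C_{b} \wr S_{q}$ (and the centralizer remark) simply makes rigorous the ``duplications'' that the paper counts informally.
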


\begin{proof}
If we write an element of cycle type $(b^{q})$ as
\begin{align}
(a_{1} \ldots a_{b})(a_{b+1} \ldots a_{2b}) \cdots (a_{(q-1)b+1} \ldots a_{qb}),
\end{align}
then there are $n!$ possible arrangements of these numbers. Among them, the following duplications occur as permutations:

\begin{itemize}
\item the $b^{q}$ arrangements of elements within the $q$ cycles of length $b$,
\item the $q!$ arrangements of the $q$ cycles themselves.
\end{itemize}
Hence, we obtain $T(b, q) = n!/(b^{q} q!)$.
\end{proof}

%%%%%%%%%%%%%%%%%%%%%%%%%%%%%%%%%%%%
\subsection{Lower Bound for $N/T$}

A useful tool for computing the values of $N(b, q)/T(b, q)$ is the following theorem from \cite{Goupil98}.

\begin{thm}
\label{thm:goupil}
Let $\lambda = (\lambda_{1}, \dotsc, \lambda_{l})$ and $\mu = (\mu_{1}, \dotsc, \mu_{m})$
be partitions of $n$.
Define the genus associated with the pair $(\lambda, \mu)$ by

\begin{align}
g = \frac{n-(l+m)+1}{2},
\end{align}
and assume that $g \in \Zz$.

Let $c_{\lambda \mu}^{n}$ denote the number of solutions $(\sigma, \rho) \in C_{\lambda} \times C_{\mu}$
to the equation $\sigma \rho = \pi$,
where $\pi$ is a fixed $n$-cycle in $S_{n}$.
Then $c_{\lambda \mu}^{n}$ is given by

\begin{align}
c_{\la \mu}^{n} = \f{n}{z_{\la}z_{\mu}2^{2g}}\sum_{\substack{g_{1},g_{2}\ge 0\\ g_{1}+g_{2}=g}}(l+2g_{1}-1)! (m+2g_{2}-1)!
\sum_{\substack{(i_{1},\dotsc,i_{l})\vDash g_{1}\\(j_{1},\dotsc,j_{m})\vDash g_{2}}}
\prod_{k=1}^{l}\binom{\la_{k}}{2i_{k}+1}\prod_{k=1}^{m}\binom{\mu_{k}}{2j_{k}+1}.
\end{align}

Here $p \vDash n$ denotes a composition of $n$, that is, a finite sequence of non-negative integers
summing to $n$, where the order of the terms matters. We adopt the convention that $\binom{a}{b}=0$ if $b>a$.

Moreover, for a partition $\lambda = 1^{\A_{1}}\cdots n^{\A_{n}}$, where $\A_{i}$ denotes the multiplicity of $i$, we define
$z_{\lambda} = \prod_{i} \A_{i}! \, i^{\A_{i}}$.
\end{thm}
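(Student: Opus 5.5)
This is the result of Goupil and Schaeffer~\cite{Goupil98}, and the paper only quotes it. If I had to reprove it, I would use the character-theoretic route via the Frobenius formula rather than a bijective argument. The formula gives
\begin{align}
c_{\la\mu}^{n} = \f{\lvert C_{\la}\rvert \lvert C_{\mu}\rvert}{n!}\sum_{\chi}\f{\chi(\la)\chi(\mu)\overline{\chi(\pi)}}{\chi(1)}.
\end{align}
By the Murnaghan--Nakayama rule, an irreducible character is nonzero on an $n$-cycle only for the hooks $(n-k,1^{k})$, where it takes the value $(-1)^{k}$. Moreover $\chi^{(n-k,1^{k})}(1)=\binom{n-1}{k}$. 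Also $\lvert C_{\la}\rvert = n!/z_{\la}$. The sum therefore reduces to
\begin{align}
c_{\la\mu}^{n}=\f{n!}{z_{\la}z_{\mu}}\sum_{k=0}^{n-1}(-1)^{k}\f{\chi_{k}(\la)\chi_{k}(\mu)}{\binom{n-1}{k}},
\end{align}
where $\chi_{k}$ is the $k$-th hook character.

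Next I would use the standard generating function for hook characters:
\begin{align}
\sum_{k}\chi_{k}(\la)\,y^{k}=\f{\prod_{i}\bigl(1-(-y)^{\la_{i}}\bigr)}{1+y}.
\end{align}
I would then replace $1/\binom{n-1}{k}$ by the Beta integral $n\int_{0}^{1}t^{k}(1-t)^{n-1-k}\,dt$. To do this, I would write the two character sums as coefficient extractions in the variables $y$ and $w$, and specialise them so that the double sum over $k$ collapses into a single integral. Concretely, with $y=-t/(1-t)$ the $\la$-factor becomes
\begin{align}
(1-t)^{-\lvert\la\rvert+1}\prod_{i}\bigl((1-t)^{\la_{i}}-t^{\la_{i}}\bigr),
\end{align}
and similarly for $\mu$. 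The aim is to reach a constant-term expression of the form $n\int_{0}^{1}P_{\la}(t)P_{\mu}(t)\,dt$, up to explicit factors, where each $P$ is a product over parts of differences $(1-t)^{a}-t^{a}$. I expect to need an extra auxiliary variable to separate the roles of $\la$ and $\mu$, because the product $\chi_{k}(\la)\chi_{k}(\mu)$ is a Hadamard product rather than a Cauchy product.

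The key symmetrisation is the substitution $t=\tfrac12+s$. It gives
\begin{align}
\bigl(\tfrac12-s\bigr)^{a}-\bigl(\tfrac12+s\bigr)^{a}=-2\sum_{j}\binom{a}{2j+1}\bigl(\tfrac12\bigr)^{a-2j-1}s^{2j+1},
\end{align}
which produces exactly the odd binomials $\binom{\la_{k}}{2i_{k}+1}$. Expanding the product over the $l$ parts, the $s$-exponent is $l+2g_{1}$ with $(i_{1},\dots,i_{l})\vDash g_{1}$, and similarly $m+2g_{2}$ for $\mu$. The remaining one-variable integrals are elementary moments, and these should yield the factorials $(l+2g_{1}-1)!$ and $(m+2g_{2}-1)!$. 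The powers of $\tfrac12$ should assemble into $2^{-2g}$, using $\lvert\la\rvert=\lvert\mu\rvert=n$. The parity constraint $g_{1}+g_{2}=g$, with $2g=n+1-l-m$, should come from matching total degree: only one homogeneous component survives the coefficient extraction.

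The main obstacle is this middle step: handling the Hadamard product of the two hook generating functions correctly. I need the coefficient extraction to give precisely $(l+2g_{1}-1)!\,(m+2g_{2}-1)!$ with the stated prefactor $n/(z_{\la}z_{\mu}2^{2g})$, and not a single Beta integral of the combined product. My first attempt would be to represent $1/\binom{n-1}{k}$ through $k!\,(n-1-k)!/(n-1)!$. I would absorb $k!$ into the $\la$-side via a Laplace/Borel transform and $(n-1-k)!$ into the $\mu$-side. The two symmetrised expansions then produce factorials independently, and the constraint $g_{1}+g_{2}=g$ comes from the requirement that the two exponents add up to $n-1$. Small cases such as $\la=\mu=(n)$, or $\mu=1^{n}$ with genus $0$, would serve to calibrate the constants.
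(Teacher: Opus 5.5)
The paper gives no proof of this statement; it only cites \cite[Theorem~2.1]{Goupil98}. So your outline has to stand on its own, and as written it does not yet: the central step is missing.

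Your opening is sound. The Frobenius formula, the hook values on an $n$-cycle, the reduction to $c_{\lambda\mu}^{n}=\frac{n!}{z_\lambda z_\mu}\sum_k(-1)^k\chi_k(\lambda)\chi_k(\mu)/\binom{n-1}{k}$, the hook generating function, and the odd-binomial expansion at $t=\frac12+s$ are all correct. There is one slip. With $y=-t/(1-t)$ the $\lambda$-factor is $(1-t)^{1-n}(1-2t)^{-1}\prod_i\bigl((1-t)^{\lambda_i}-t^{\lambda_i}\bigr)$. The factor $(1-2t)^{-1}=-(2s)^{-1}$, which you dropped, is what turns the exponents $l+2g_1$ and $m+2g_2$ into $l+2g_1-1$ and $m+2g_2-1$.

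\textbf{The gap.} It is the step you flag yourself, and it carries the whole content of the theorem.
\begin{itemize}
\item A single Beta integral of the product of the two substituted generating functions computes a Cauchy-type sum. It is the wrong sum, because $\chi_k(\lambda)\chi_k(\mu)$ is a Hadamard product.
\item Splitting $k!\,(n-1-k)!$ between the two sides by Borel/Laplace transforms only rewrites the same sum as the coefficient of $z^{n-1}$ in a product of two transformed polynomials. It does not evaluate it.
\item ``Matching total degree'' does not give the selection rule. After symmetrisation, both expansions contain all $g_1$ and all $g_2$. The cross terms with $g_1+g_2\neq g$ are really present and must be shown to cancel.
\end{itemize}
Your outline therefore derives neither this cancellation nor the factorials $(l+2g_1-1)!\,(m+2g_2-1)!$.

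\textbf{The missing lemma.} It is invariant-theoretic. Put $N=n-1$. For binary forms $f=\sum_k f_kx^{N-k}y^k$ and $h=\sum_k h_kx^{N-k}y^k$, the pairing
\[
\langle f,h\rangle=\sum_k(-1)^kf_kh_{N-k}\Big/\binom{N}{k}
\]
is the $N$-th transvectant $\frac{1}{(N!)^2}(\partial_{x_1}\partial_{y_2}-\partial_{y_1}\partial_{x_2})^N f(x_1,y_1)\,h(x_2,y_2)$. Computing it in new linear coordinates $(P,D)=M(x,y)$ therefore only introduces the factor $\det(M)^N$. Equivalently, this is the orthogonality of Krawtchouk polynomials.

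\textbf{How it finishes your argument.} Take $f(x,y)=\prod_i\bigl(x^{\lambda_i}-(-y)^{\lambda_i}\bigr)/(x+y)$, so $f_k=\chi_k(\lambda)$. Let $h$ be the analogous form for $\mu$ with $x$ and $y$ swapped, so $h_{N-k}=\chi_k(\mu)$. Then your sum is exactly $\langle f,h\rangle$. Use $P=(x-y)/2$ and $D=(x+y)/2$, a change of determinant $\frac12$. Your symmetrisation then gives
\[
f=2^{l-1}\sum_{g_1}\Lambda_{g_1}P^{n-l-2g_1}D^{l+2g_1-1},\qquad h=(-1)^{n-m}2^{m-1}\sum_{g_2}M_{g_2}P^{n-m-2g_2}D^{m+2g_2-1},
\]
where $\Lambda_{g_1}$ and $M_{g_2}$ are the inner sums over compositions. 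Since $\langle P^{N-k}D^k,P^{N-j}D^j\rangle=(-1)^k\delta_{j,N-k}/\binom{N}{k}$, only the pairs with $g_1+g_2=g$ survive. Each contributes $(-1)^{l-1}\Lambda_{g_1}M_{g_2}(l+2g_1-1)!\,(m+2g_2-1)!/(n-1)!$. The powers of two combine as $2^{-(n-1)}2^{l-1}2^{m-1}=2^{-2g}$. The overall sign is $(-1)^{n-m+l-1}=(-1)^{2g}=1$. This gives exactly the stated formula.

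Supplying this lemma, with its proof, is what turns your sketch into a proof.
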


\begin{proof}
See \cite[Theorem~2.1]{Goupil98}.
\end{proof}
\HL

Using this theorem, we establish a lower bound for $N(b, q)/T(b, q)$.

\begin{thm}
\label{thm:MN}
\thmNT
\end{thm}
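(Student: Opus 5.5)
The plan is to compute $N(b,q)$ exactly with \thmref{thm:goupil}, divide by $T(b,q)$ from \propref{prop:tbq}, and bound the resulting average by convexity (Jensen's inequality).

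\textbf{Step 1: express $N(b,q)$ as a Goupil count.} A pair $(y,w)$ with $y\in T(b,q)$, $w=xy$ an $n$-cycle is the same as a factorization $x=w\,y^{-1}$ with $w\in C_{(n)}$ and $y^{-1}\in C_{(b^q)}$. Hence $N(b,q)=c^{n}_{\lambda\mu}$ with $\lambda=(n)$, $\mu=(b^{q})$, $\pi=x$. Here $l=1$ and $m=q$, so the genus is $g=(n-q)/2$. This is a non-negative integer precisely because $n\equiv q \pmod 2$. We also have $z_\lambda=n$ and $z_\mu=b^{q}q!$.

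In the formula of \thmref{thm:goupil}, the composition of $g_1$ has a single part. The factor $(2g_1)!\binom{n}{2g_1+1}$ simplifies to $n!/\bigl((2g_1+1)(q+2g_2-1)!\bigr)$, using $n-2g_1-1=q+2g_2-1$. This cancels the factor $(q+2g_2-1)!$. Dividing by $T(b,q)=n!/(b^{q}q!)$ should give
\begin{align}
\f{N(b,q)}{T(b,q)}=\f{1}{4^{g}}\sum_{g_2=0}^{g}\f{S(g_2)}{2(g-g_2)+1},\qquad
S(g_2)=\sum_{(j_1,\dotsc,j_q)\vDash g_2}\prod_{k=1}^{q}\binom{b}{2j_k+1}.
\end{align}

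\textbf{Step 2: reinterpret the sum as an expectation.} Since $\sum_{j}\binom{b}{2j+1}=2^{b-1}$, we get $\sum_{g_2\ge0}S(g_2)=2^{(b-1)q}=2^{n-q}=4^{g}$. The restriction $g_2\le g$ loses nothing: each $j_k\le (b-1)/2$, so $g_2\le q(b-1)/2\le g$. Therefore the weights $S(g_2)/4^{g}$ form the distribution of $g_2=j_1+\dots+j_q$, where the $j_k$ are independent. Each $j_k$ is distributed as $(K-1)/2$, with $K$ a uniformly random odd-size subset size of a $b$-set. This gives
\begin{align}
\f{N}{T}=\mbE\!\left[\f{1}{2(g-g_2)+1}\right].
\end{align}

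\textbf{Step 3: apply Jensen.} The function $t\mapsto 1/(1+2t)$ is convex on $t\ge 0$, so
\begin{align}
\f{N}{T}\ \ge\ \f{1}{1+2(g-\mbE[g_2])}.
\end{align}
For $b\ge2$ we have $\sum_{k\text{ odd}}k\binom{b}{k}=b\,2^{b-2}$. Hence $\mbE[K]=b/2$, so $\mbE[j_k]=(b-2)/4$ and $\mbE[g_2]=q(b-2)/4$. Then
\begin{align}
g-\mbE[g_2]=\f{2(n-q)-q(b-2)}{4}=\f{n}{4},
\end{align}
and the bound becomes $1/(1+n/2)=2/(n+2)$.

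For $b=2$ we have $\binom{2}{2j+1}\neq0$ only for $j=0$. So $g_2\equiv 0$ is deterministic, Jensen's inequality is an equality, and $N/T=1/(2g+1)=1/(n-q+1)=2/(n+2)$ since $q=n/2$.

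The main obstacle is Step 1: specializing Goupil's formula correctly, keeping track of which partition plays which role and verifying the factorial cancellation. Once the expectation form is in hand, the Jensen argument and the mean computation are routine. A cruder bound that uses only the minimal weight $1/(n-q+1)$ would fail for $b\ge3$. That is why the averaging over $g_2$ is essential.
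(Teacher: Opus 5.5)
Your proposal is correct and follows essentially the same route as the paper. Both specialize Goupil's formula to $\lambda=(n)$, $\mu=(b^{q})$ and rewrite $N/T$ as $\mbE\bigl[1/(2(g-S_q)+1)\bigr]$, where $S_q$ is a sum of $q$ independent variables with law $\binom{b}{2j+1}/2^{b-1}$. Both then compute $\mbE[S_q]=q(b-2)/4$, apply Jensen's inequality, and note that the distribution is degenerate when $b=2$, which gives equality. Your explicit bijection $y\mapsto(xy,\,y^{-1})$, justifying $N(b,q)=c^{n}_{(n)(b^{q})}$, is a small step the paper leaves implicit.
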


\begin{proof}
In \thmref{thm:goupil}, for the passport $[n, b^{q}, n]$, we have $\la = (n^{1})$, $l = 1$, $\mu = (b^{q})$, and $m = q$.
Moreover, the genus is given by $g = (n-q)/2 = q(b-1)/2$. Hence,

\begin{align}
N(b, q) = c_{(n) (b^{q})}^{(n)} = \f{1}{b^{q}q!2^{2g}}\sum_{\substack{g_{1},g_{2}\ge 0\\ g_{1}+g_{2}=g}}(2g_{1})! (2g_{2}+q-1)!
\binom{n}{2g_{1}+1}\sum_{(j_{1},\dotsc,j_{q})\vDash g_{2}}
\prod_{k=1}^{q}\binom{b}{2j_{k}+1}.
\end{align}
Since $T(b, q) = n!/(b^{q}q!)$ by \propref{prop:tbq}, we obtain

\begin{align}
\f{N}{T} = \f{1}{2^{2g}n!}\sum_{\substack{g_{1},g_{2}\ge 0\\ g_{1}+g_{2}=g}}(2g_{1})! (2g_{2}+q-1)!\binom{n}{2g_{1}+1}
\sum_{(j_{1},\dotsc,j_{q})\vDash g_{2}}
\prod_{k=1}^{q}\binom{b}{2j_{k}+1}.
\end{align}
For the terms involving $g_{1}$ and $g_{2}$ in the outer summation, we have

\begin{align}
\f{1}{n!}(2g_{1})!(2g_{2}+q-1)!\binom{n}{2g_{1}+1} &= \f{1}{n!}(2g_{1})!(2g_{2}+q-1)!\f{n!}{(2g_{1}+1)!(n-2g_{1}-1)!} \\
&= \f{(2g_{2}+q-1)!}{(2g_{1}+1)(n-2g_{1}-1)!}.
\end{align}
Since $g_{1}+g_{2} = g = (n-q)/2$, we have $q = n-2g_{1}-2g_{2}$. Hence,

\begin{align}
\f{(2g_{2}+q-1)!}{(2g_{1}+1)(n-2g_{1}-1)!}
&= \f{(2g_{2}+(n-2g_{1}-2g_{2})-1)!}{(2g_{1}+1)(n-2g_{1}-1)!} \\
&= \f{1}{2g_{1}+1} = \f{1}{2(g - g_{2})+1}.
\end{align}
Therefore,

\begin{align}
\f{N}{T} = \f{1}{2^{2g}}\sum_{g_{2}=0}^{g}\f{1}{2(g-g_{2})+1}
\sum_{(j_{1},\dotsc,j_{q})\vDash g_{2}}
\prod_{k=1}^{q}\binom{b}{2j_{k}+1} \q \left(g = \f{n-q}{2}\right).
\end{align}

Let

\begin{align}
A_{g_{2}} \ceq \sum_{(j_{1},\dotsc,j_{q})\vDash g_{2}}
\prod_{k=1}^{q}\binom{b}{2j_{k}+1},
\end{align}
then

\begin{align}
\f{N}{T} = \f{1}{2^{2g}}\sum_{g_{2}= 0}^{g}\f{A_{g_{2}}}{2(g-g_{2})+1}.
\end{align}

Consider the polynomial

\begin{align}
P_{b}(x) \ceq \sum_{j=0}^{\lfloor (b-1)/2 \rfloor}\binom{b}{2j+1}x^{j}.
\end{align}
Then we may write

\begin{align}
P_{b}(x)^{q} = \sum_{g_{2}=0}^{q\lfloor (b-1)/2 \rfloor}A_{g_{2}}x^{g_{2}}, \q A_{g_{2}} = [x^{g_{2}}]\,P_{b}(x)^{q},
\end{align}
where $[x^{g_{2}}]\,P_{b}(x)^{q}$ denotes the coefficient of $x^{g_{2}}$ in $P_{b}(x)^{q}$.

Moreover,

\begin{align}
P_{b}(1) &= \sum_{j=0}^{\lfloor (b-1)/2 \rfloor}\binom{b}{2j+1} = \sum_{\substack{l=1\\ l\colon \text{odd}}}^{b}\binom{b}{l}
= \f{1}{2}\left(\sum_{l=0}^{b}\binom{b}{l} - \sum_{l=0}^{b}(-1)^{l}\binom{b}{l}\right) \\
\label{eq:pb1}
&= \f{1}{2}((1+1)^{b} - (1-1)^{b}) = 2^{b-1}.
\end{align}
Therefore,

\begin{align}
\sum_{g_{2}=0}^{q\lfloor (b-1)/2 \rfloor}A_{g_{2}} &= P_{b}(1)^{q} = 2^{q(b-1)} = 2^{2g}.
\end{align}
If we define $\pi(g_{2}) \ceq A_{g_{2}}/2^{2g}$, then

\begin{align}
\sum_{g_{2}=0}^{q\lfloor (b-1)/2 \rfloor}\pi(g_{2}) = 1.
\end{align}
Hence, $\pi(g_{2})$ can be regarded as a probability mass function.

Since

\begin{align}
\f{N}{T} = \f{1}{2^{2g}}\sum_{g_{2}=0}^{g}\f{A_{g_{2}}}{2(g-g_{2})+1}
= \sum_{g_{2}=0}^{g}\f{\pi(g_{2})}{2(g-g_{2})+1},
\end{align}
if a random variable $X$ follows the probability distribution $\mbP(X = g_{2}) = \pi(g_{2})$,
then $N/T$ can be expressed in terms of its expected value as

\begin{align}
\label{eq:PX}
\f{N}{T} &= \mbE\left[\f{1}{2(g-X)+1}\right].
%\mbP(X = g_{2}) &=
%\begin{dcases}
%\f{1}{2^{2g}}\sum_{\substack{(j_{1},\dotsc,j_{q})\vDash g_{2}\\2j_{k}+1\le b}}
%\prod_{k=1}^{q}\binom{b}{2j_{k}+1} & (0 \le g_{2} \le g) \\
%0 & (\otherwise)
%\end{dcases}
\end{align}
On the other hand, since

\begin{align}
\sum_{j=0}^{\lfloor (b-1)/2 \rfloor} \binom{b}{2j+1} = 2^{b-1},
\end{align}
we can define a random variable $J$ by

\begin{align}
\label{eq:pjj}
\mbP(J=j) =
\begin{dcases}
\f{\binom{b}{2j+1}}{2^{b-1}} & (0 \le j \le \lfloor (b-1)/2 \rfloor) \\
0 & (\otherwise)
\end{dcases}.
\end{align}

%The probability generating function is

%\begin{align}
%p_{b}(x) \ceq \mbE[x^{J}] = \sum_{j}\mbP(J=j)x^{j} = \sum_{j=0}^{\lfloor (b-1)/2 \rfloor}\f{\binom{b}{2j+1}}{2^{b-1}}x^{j}
%\ \left(= \f{P_{b}(x)}{2^{b-1}}\right).
%\end{align}

Let $J_{1}, \dotsc, J_{q}$ be independent random variables following this distribution, and define
$S_{q} \ceq J_{1} + \cdots + J_{q}$. Then

\begin{align}
%\mbE[x^{S_{q}}] &= \mbE[x^{J_{1}+\cdots+J_{q}}] = p_{b}(x)^{q} = \sum_{g_{2}=0}^{g}A_{g_{2}}x^{g_{2}}, \\
\mbP(S_{q} = g_{2}) &= \sum_{j_{1}+\cdots+j_{q}=g_{2}}\prod_{k=1}^{q}\mbP(J_{k}=j_{k})
%= \sum_{j_{1}+\cdots+j_{q}=g_{2}}\prod_{k=1}^{q}\f{\binom{b}{2j+1}}{2^{b-1}} \\
= \f{1}{2^{q(b-1)}}\sum_{j_{1}+\cdots+j_{q}=g_{2}}\prod_{k=1}^{q}\binom{b}{2j+1} = \f{A_{g_{2}}}{2^{2g}} = \pi(g_{2}).
\end{align}
Therefore, $S_{q}$ follows the same probability distribution as in \eqref{eq:PX}.  Then we have

\begin{align}
\label{eq:gsq}
\f{N}{T} &= \mbE\left[\f{1}{2(g-S_{q})+1}\right].
\end{align}

Here,

\begin{align}
\mbE[J] &= \sum_{j\ge 0}j\cdot \mbP(J=j) = \f{1}{2^{b-1}}\sum_{j=0}^{\lfloor (b-1)/2 \rfloor}\binom{b}{2j+1} \\
&= \f{1}{2^{b-1}}\sum_{\substack{k=1\\ k\colon \text{odd}}}\f{k-1}{2}\binom{b}{k} \qquad (k = 2j+1) \\
\label{eq:EJ}
&= \f{1}{2^{b}}\sum_{\substack{k=1\\ k\colon \text{odd}}}^{b}k\binom{b}{k} - \f{1}{2^{b}}\sum_{\substack{k=1\\ k\colon \text{odd}}}^{b}\binom{b}{k}.
\end{align}
From \eqref{eq:pb1}, 

\begin{align}
\label{eq:pb2}
\sum_{\substack{k=1\\ k\colon \text{odd}}}^{b}\binom{b}{k} = 2^{b-1}.
\end{align}
Moreover, starting from

\begin{align}
(1 + x)^{b} = \sum_{k=0}^{b}\binom{b}{k}x^{k},
\end{align}
differentiating both sides with respect to $x$ and then multiplying by $x$ gives

\begin{align}
bx(1 + x)^{b-1} = \sum_{k=0}^{b}k\binom{b}{k}x^{k} = \sum_{k=1}^{b}k\binom{b}{k}x^{k}.
\end{align}
Substituting $x = 1$ and $x = -1$ respectively, we obtain

\begin{align}
2^{b-1}b &= \sum_{k=1}^{b}k\binom{b}{k}
= \sum_{\substack{k=1\\k\colon \text{odd}}}^{b}k\binom{b}{k} + \sum_{\substack{k=1\\k\colon \text{even}}}^{b}k\binom{b}{k}, \\
0 &= \sum_{k=1}^{b}k\binom{b}{k}(-1)^{k}
= -\sum_{\substack{k=1\\k\colon \text{odd}}}^{b}k\binom{b}{k} + \sum_{\substack{k=1\\k\colon \text{even}}}^{b}k\binom{b}{k}.
\end{align}
Subtracting these two equations and dividing by 2, we obtain

\begin{align}
\label{eq:pb3}
\sum_{\substack{k=1\\k\colon \text{odd}}}^{b}k\binom{b}{k} &= \f{2^{b-1}b-0}{2} = 2^{b-2}b.
\end{align}
From \eqref{eq:EJ}, \eqref{eq:pb2} and \eqref{eq:pb3}, we obtain

\begin{align}
\label{eq:ej}
\mbE[J] &= \f{2^{b-2}b}{2^{b}} - \f{2^{b-1}}{2^{b}} = \f{b-2}{4}.
\end{align}
Hence,

\begin{align}
g - \mbE[S_{q}] &= g - q\mbE[J] = g - \f{q(b-2)}{4} = \f{q(b-1)}{2} - \f{q(b-2)}{4} = \f{bq}{4} = \f{n}{4}.
\end{align}

For \eqref{eq:gsq}, consider the function $f(x) = 1/(2(g-x)+1) \ (0 \le x \le g)$. Then,

\begin{align}
f''(x) = \f{8}{(2(g-x)+1)^{3}} > 0.
\end{align}
Hence, $f(x)$ is a convex function.
By Jensen's inequality, we have

\begin{align}
\f{N}{T} &= \mbE[f(S_{q})] \ge f(\mbE[S_{q}]) = \f{1}{2(g-\mbE[S_{q}])+1} \\
&= \f{1}{2\cdot \f{n}{4}+1} = \f{2}{n+2}.
\end{align}

In particular, when $b = 2$, it follows from \eqref{eq:ej} that $\mbE[J] = 0$. As the random variable $J$ is always non-negative
by \eqref{eq:pjj}, in this case we conclude that $J \equiv 0$.

Consequently, $S_{q} = J_{1} + \cdots + J_{q} \equiv 0$. Therefore, by \eqref{eq:gsq},

\begin{align}
\f{N}{T} &= \mbE\left[\f{1}{2(g-S_{q})+1}\right]  = \f{1}{2g+1} = \f{1}{2\cdot\f{q(b-1)}{2}+1}
= \f{1}{q+1} = \f{2}{2q+2} = \f{2}{n+2}.
\end{align}
Therefore, the inequality is sharp, and equality holds precisely when $b = 2$.
\end{proof}

%%%%%%%%%%%%%%%%%%%%%%%%%%%%%%%%%%%%
\subsection{Computation of $I_{m}$}

In order to find an upper bound for $I(b, q)/T(b, q)$, we divide the problem according to the divisors
$m$ of $n$ with $2 \le m < n$.

\begin{lemma}
\label{lem:xres}
Let $G \le S_{n}$ be a subgroup containing $x = \sigman$.
If the natural action of $G$ on
$E = \{1, \dotsc, n\}$ is imprimitive, then every nontrivial block of $G$
is of the form
\begin{align}
B_{m,j} = \left\{ j,\ j+m,\ j+2m,\ \dotsc,\ j+\left(\f{n}{m}-1\right)m \right\}
\end{align}
for some divisor $m$ of $n$ with $2 \le m < n$ and some $1 \le j \le m$,
where the elements are taken modulo~$n$.

Moreover, the cyclic subgroup $H = \langle x \rangle$ has all of the above sets $B_{m, j}$ as its nontrivial blocks.
\end{lemma}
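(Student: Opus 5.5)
The plan is to reduce everything to the cyclic subgroup $H=\langle x\rangle$: a block for $G$ is in particular a block for $H$, so it suffices to classify the nontrivial blocks of $H$ on $E=\{1,\dotsc,n\}$. We then show they are exactly the sets $B_{m,j}$, which proves both assertions at once.

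\emph{Step 1 (blocks of $G$ are blocks of $H$).} Let $S$ be a nontrivial block for $G$. Since $H\le G$, for every $h\in H$ we have either $h\cdot S=S$ or $(h\cdot S)\cap S=\emptyset$. Hence $S$ is a block for $H$. The group $H$ is transitive on $E$, and $S$ remains nontrivial as a block for $H$, because nontriviality is a condition on $S$ alone.

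\emph{Step 2 (classifying blocks of the regular cyclic action).} Let $S$ be a nontrivial block for $H$. Fix $j\in S$ and put
\[
K=\{h\in H \mid h\cdot S=S\},
\]
the setwise stabilizer, which is a subgroup of $H$. Each $s\in S$ has the form $s=x^{k}\cdot j$, and then $x^{k}\cdot S$ meets $S$ in $s$, so $x^{k}\in K$. This gives $S\subseteq K\cdot j$. Conversely, $K\cdot j\subseteq S$ by the definition of $K$, so $S=K\cdot j$.

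Because $H$ acts freely on $E$, we get $|S|=|K|$. Every subgroup of the cyclic group $H\cong C_{n}$ is $\langle x^{m}\rangle$ for a unique divisor $m$ of $n$, and this subgroup has order $n/m$. Therefore
\[
S=\{j, j+m, \dotsc, j+(n/m-1)m\} \pmod n,
\]
which is $B_{m,j}$ after replacing $j$ by its residue in $\{1,\dotsc,m\}$. The condition $1<|S|<n$ forces $2\le m<n$.

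\emph{Step 3 (the converse for $H$).} Each $B_{m,j}$ is a residue class modulo $m$, and $x$ maps residue classes modulo $m$ to residue classes modulo $m$, since $m\mid n$ makes addition of $1$ modulo $n$ well defined modulo $m$. So every $x^{k}$ either fixes $B_{m,j}$ setwise or sends it to a disjoint class, and $B_{m,j}$ is a block for $H$. It is nontrivial because its size $n/m$ satisfies $1<n/m<n$.

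Nothing here is a real obstacle. The only care needed is in Step 2, in checking that $S=K\cdot j$ via the block property, and in tracking the indexing modulo $n$ against the representatives $1\le j\le m$.
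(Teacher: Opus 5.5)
Your proof is correct and follows essentially the paper's route. In both, the key point is that any power $x^{d}$ carrying a point of the block into the block must stabilize it, so the block is an orbit of a subgroup $\langle x^{m}\rangle$ of $H$; the converse is then checked by noting that $x$ permutes the residue classes modulo $m$. The only difference is packaging: you identify that subgroup at once as the setwise stabilizer and use the subgroup structure of $C_{n}$, whereas the paper builds it up by an iterative gcd procedure ($m_{0}=\gcd(n,d)$, then $m_{1}=\gcd(m_{0},d')$, and so on), which your stabilizer argument streamlines.
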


\begin{proof}
In the following, the elements of $E$ are taken modulo $n$.

Let $B \subset E$ be a nontrivial block of $G$. Since $B$ has at least two elements by
\defref{def:block}, we may choose $j,\ j+d \in B$ with $j \in E$ and $0 < d < n$.
Then $x^{d} \in G$ and $x^{d} \cdot j = j+d \in B$.
Hence, since $(x^{d} \cdot B) \cap B \ne \emptyset$,
it follows from \defref{def:block} that $x^{d} \cdot B = B$.

Applying $x^{d}$ once more, we obtain
$x^{d} \cdot (j+d) = j+2d \in B$.
Repeating this argument, it follows that

\begin{align}
\{j + kd \mid k \in \Zz \} \subset B.
\end{align}

Let $m_{0} = \gcd(n, d)$. Working modulo $n$, we have

\begin{align}
\{ j + kd \mid k \in \Zz \} &= \{j + km_{0} \mid k \in \Zz \} \\
&= \left\{ j,\ j+m_{0},\ j+2m_{0},\ \dotsc,\ j+\left(\f{n}{m_{0}}-1\right)m_{0} \right\} \\
&= B_{m_{0}, j}.
\end{align}
Hence,

\begin{align}
B_{m_{0}, j} \subset B.
\end{align}

If $B_{m_{0}, j} \subsetneq B$, we may choose $j+d' \in B \setminus B_{m_{0}, j}$ with
$m_{0} \nmid d'$. Applying the above argument again, we obtain

\begin{align}
\{ j + kd' \mid k \in \Zz \} \subset B,
\end{align}
and hence
\begin{align}
B_{m_{0}, j} \cup \{ j + kd' \mid k \in \Zz \} \subset B.
\end{align}
Let $m_{1} = \gcd(m_{0}, d')$. Then $m_{1} \mid n$ and we obtain

\begin{align}
B_{m_{0}, j} \cup \{ j + kd' \mid k \in \Zz \} &= \{ j + km_{0} + k'd' \mid k, k' \in \Zz \} \\
&= \{ j + km_{1} \mid k \in \Zz \} = B_{m_{1}, j}.
\end{align}
Hence,
\begin{align}
B_{m_{0}, j} \subsetneq B_{m_{1}, j} \subset B.
\end{align}

This procedure can be repeated for $m_{i}$ ($i \ge 0$), and since $B \subset E$ is finite,
the process terminates. Consequently, we eventually obtain

\begin{align}
B_{m, j} = B
\end{align}
for some divisor $m$ of $n$ with $m < n$.

If $m = 1$, then $B = E$, which is not a nontrivial block.
Therefore, we must have  $2 \le m < n$.

Since $x \in H$, the nontrivial blocks of $H$ are also restricted to those of the form $B_{m,j}$.
Conversely, let $m$ and $j$ satisfy $m \mid n$, $2 \le m < n$, and $1 \le j \le m$.
For any $x^{d} \in H$, we have

\begin{align}
x^{d} \cdot B_{m, j} = \left\{ j+d,\ j+m+d,\ j+2m+d,\ \dotsc,\ j+\left(\f{n}{m}-1\right)m+d \right\} = B_{m, j+d}.
\end{align}

If $m \mid d$, then $B_{m,j+d} = B_{m,j}$, since for each $0 \le k \le n/m-1$,
there exists $0 \le k' \le n/m-1$ such that $j+km+d = j+k'm$.
Hence $x^{d} \cdot B_{m,j} = B_{m,j}$.

If $m \nmid d$, then for each $0 \le k \le n/m-1$,
we have $j+km+d \neq j+k'm$ for any $0 \le k' \le n/m-1$.
Hence $(x^{d} \cdot B_{m,j}) \cap B_{m,j} = \emptyset$.

Therefore, each $B_{m,j}$ satisfies the defining condition of a block of $H$.
Since $1 < \lvert B_{m,j} \rvert = n/m < n$, every $B_{m,j}$ is a nontrivial block of $H$.
\end{proof}

By the above lemma, $G = \langle x, y \rangle$ is imprimitive if and only if
$\langle y \rangle$ admits the residue classes modulo some $m \mid n$
($2 \le m < n$) as a block system.
Equivalently, $y \in I_m$ (see Section~\ref{sec:tbq}).

We now consider what kinds of such $y$ can occur, taking the case $(n, b, q, m) = (18, 3, 6, 6)$ as an example.
In this case, the blocks are the residue classes of $\{1, \dotsc, 18\}$ modulo 6.
Let these blocks be $B_{0}, \dotsc, B_{5}$ (where $B_{0} = \{6, 12, 18\}$).
The permutations $y$ that preserve these blocks fall into the following three types:

\begin{enumerate}
\item\label{itm:block1} $(B_{1}\ B_{1}\ B_{1})(B_{2}\ B_{2}\ B_{2})(B_{3}\ B_{3}\ B_{3})(B_{4}\ B_{4}\ B_{4})(B_{5}\ B_{5}\ B_{5})(B_{0}\ B_{0}\ B_{0})$ \\
\eg $(1\ 7\ 13)(2\ 14\ 8)(3\ 9\ 15)(4\ 10\ 16)(5\ 17\ 11)(6\ 12\ 18)$
\item\label{itm:block2} $(B_{1}\ B_{2}\ B_{3})(B_{1}\ B_{2}\ B_{3})(B_{1}\ B_{2}\ B_{3})(B_{4}\ B_{5}\ B_{0})(B_{4}\ B_{5}\ B_{0})(B_{4}\ B_{5}\ B_{0})$ \\
\eg $(1\ 2\ 15)(7\ 8\ 9)(13\ 14\ 3)(4\ 5\ 6)(10\ 17\ 12)(16\ 11\ 18)$ \\
and those obtained by permuting the blocks.
\item\label{itm:block3} $(B_{1}\ B_{1}\ B_{1})(B_{2}\ B_{2}\ B_{2})(B_{3}\ B_{3}\ B_{3})(B_{4}\ B_{5}\ B_{0})(B_{4}\ B_{5}\ B_{0})(B_{4}\ B_{5}\ B_{0})$ \\
\eg $(1\ 7\ 13)(2\ 8\ 14)(3\ 15\ 9)(10\ 5\ 6)(4\ 17\ 12)(16\ 11\ 18)$ \\
and those obtained by permuting the blocks.
\end{enumerate}

In case \ref{itm:block1}, all permutations are performed within the same blocks.
In case \ref{itm:block2}, the six blocks are divided into two groups of three, and permutations are performed within each group according to a fixed order.
Case \ref{itm:block3} represents a mixed type of the previous two.

These correspond to the partitions of $m = 6$:
\ref{itm:block1} $1 \times 6 = 6$, \ref{itm:block2} $3 \times 2 = 6$, and \ref{itm:block3} $1 \times 3 + 3 \times 1 = 6$.
If there are $t_{i}$ cycles, each involving $d_{i}$ distinct blocks, then $\sum_{i} d_{i} t_{i} = m$.
For the permutation to cycle within each group of $d_{i}$ blocks, it must hold that $d_{i} \mid b$.
Furthermore, since each block (of which there are $n/m = bq/m$) must rotate its elements in units of $b/d_{i}$, it must also hold
$b/d_{i} \mid bq/m$, \ie $m \mid d_{i} q$.
The partitions of $6$ satisfying these conditions are
$\{(d_{i}, t_{i})\} = \{(1, 6)\}, \{(3, 2)\}, \{(1, 3), (3, 1)\}$,
corresponding precisely to the three types described above.

\begin{thm}
\label{thm:imbq}
\thmITa

\begin{align}
\label{eq:diti}
\thmITb
\end{align}
\end{thm}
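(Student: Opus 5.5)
The plan is a direct count: organize every admissible $y$ by the permutation it induces on the blocks. By \lemref{lem:xres}, $x$ already permutes the residue classes $B_{m,1},\dotsc,B_{m,m}$ cyclically, so these classes form a block system for $G=\langle x,y\rangle$ exactly when $y$ maps each class onto some class. Hence $I_{m}(b,q)$ is the number of $y\in T(b,q)$ that preserve this partition. Write $s=n/m=bq/m$ for the block size. Each such $y$ induces a permutation $\bar y\in S_{m}$ of the $m$ blocks, and I would count the $y$ fibrewise over $\bar y$.

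\textbf{Step 1 (constraints on $\bar y$).} Suppose $\bar y$ has a cycle $(C_{1}\ C_{2}\ \dotsc\ C_{d})$ of length $d$.
\begin{itemize}
\item Every cycle of $y$ meeting $C_{1}\cup\dotsb\cup C_{d}$ stays inside this union and visits the blocks in cyclic order, so its length is a multiple of $d$. Thus $d\mid b$.
\item The map $y^{d}$ preserves $C_{1}$. A $b$-cycle of $y$ through $C_{1}$ meets $C_{1}$ in exactly $b/d$ points, on which $y^{d}$ acts as a single $(b/d)$-cycle. So $y$ restricted to the union has cycle type $(b^{\ast})$ if and only if $y^{d}|_{C_{1}}$ has cycle type $((b/d)^{sd/b})$.
\item This requires $(b/d)\mid s$, i.e.\ $m\mid dq$. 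Note that $sd/b=dq/m$.
\end{itemize}
Grouping the cycles of $\bar y$ by length gives a partition $\{(d_{i},t_{i})\}$ of $m$ with distinct $d_{i}$, satisfying exactly the conditions \eqref{eq:diti}. The number of $\bar y$ of that cycle type is $m!/\prod_{i} d_{i}^{t_{i}}t_{i}!$.

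\textbf{Step 2 (fibre count).} This is the key, and the only delicate, step. Fix one $d$-cycle of blocks.
\begin{itemize}
\item The bijections $y|_{C_{k}}\colon C_{k}\to C_{k+1}$ for $k=1,\dotsc,d-1$ may be chosen freely, giving $(s!)^{d-1}$ choices.
\item Given these, choosing the last bijection $C_{d}\to C_{1}$ is equivalent to choosing the composite $y^{d}|_{C_{1}}\in \Sym(C_{1})\cong S_{s}$. By Step 1 this composite must have cycle type $((b/d)^{dq/m})$.
\item By \propref{prop:tbq} the number of such composites is $s!/\bigl((b/d)^{dq/m}(dq/m)!\bigr)$.
\end{itemize}
So each $d$-cycle of blocks contributes $(s!)^{d}\,(d/b)^{dq/m}/(dq/m)!$. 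The choices on different block-cycles are independent.

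\textbf{Step 3 (assemble).} Multiplying over all block-cycles gives the following.
\begin{itemize}
\item The factors $(s!)^{d}$ combine to $(s!)^{\sum d_{i}t_{i}}=((n/m)!)^{m}$.
\item The powers of $b$ combine to $b^{-\sum_{i} t_{i}d_{i}q/m}=b^{-q}$, since $\sum_{i} d_{i}t_{i}=m$.
\item The remaining factors are $\prod_{i}\bigl(d_{i}^{d_{i}q/m}/(d_{i}q/m)!\bigr)^{t_{i}}$.
\end{itemize}
Together with the count of $\bar y$ from Step 1, summing over all admissible partitions gives exactly the stated formula for $I_{m}(b,q)$.

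The main thing to verify carefully is the equivalence in Step 1, namely that $y$ has all cycles of length $b$ on the union if and only if $y^{d}|_{C_{1}}$ has all cycles of length $b/d$. This holds because the $y$-orbits in the union correspond bijectively to the $y^{d}$-orbits in $C_{1}$, with lengths multiplied by $d$.
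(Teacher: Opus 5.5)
Your proposal is correct and follows essentially the same route as the paper: you count $y$ by the block permutation $\bar y$ it induces, which contributes the factor $m!/\prod_i d_i^{t_i}t_i!$. For each $d$-cycle of blocks you then get $((n/m)!)^{d-1}$ free bijections times $T(b/d,\,dq/m)$ choices for the return map, exactly as in the paper. Your parametrization of the last bijection by the composite $y^{d}|_{C_1}$, together with the orbit correspondence, makes the paper's somewhat informal fibre count precise, and it also avoids the paper's intermediate display, where $m!$ is misplaced inside the product over $i$.
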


\begin{proof}
By \lemref{lem:xres}, it suffices to count the number of elements $y$ of cycle type $(b^{q})$
whose blocks under the action of $\langle y \rangle$ are exactly the residue classes modulo $m$.
As discussed above, such a permutation $y$ determines a partition of $m$, as defined by \eqref{eq:diti}.

For each $i$, there are $t_{i}$ cycles of length $b$, each containing $d_{i}$ distinct blocks.
For these $d_{i}$ blocks to circulate within a cycle, we must have $d_{i} \mid b$.
Moreover, since each block (and there are $n/m = bq/m$ such blocks) rotates its elements in units of $b/d_{i}$,
we must have $b/d_{i} \mid bq/m$, that is, $m \mid d_{i} q$.

Fix a partition $\{(d_{i}, t_{i})\}$ of $m$.
First, determine the assignment of blocks corresponding to $d_{1} \cdot t_{1}, d_{2} \cdot t_{2}, \ldots$ and the order of each set of $d_{i}$ blocks (this corresponds to the block arrangement in the preceding example).
There are $m!$ ways to arrange the $m$ blocks in total.
For each pair $(d_{i}, t_{i})$, there are $d_{i}$ ways to represent the same permutation of $d_{i}$ distinct blocks, and $t_{i}!$ ways to order the $t_{i}$ types.
Hence, the number of possible assignments and orderings of blocks is given by

\begin{align}
\label{eq:mdt}
\f{m!}{\prod_{i}d_{i}^{t_{i}}t_{i}!}.
\end{align}

In a cycle of length $b$ containing $d_{i}$ distinct blocks, each cycle consists of $d_{i}$ blocks, with $b/d_{i}$ elements from each block.
The same type of cycle appears $(bq/m) / (b/d_{i}) = d_{i}q/m$ times.
Focusing on one of these $d_{i}$ blocks, since it contains $n/m = bq/m$ elements and these are arranged into $d_{i}q/m$ cycles, the number of such configurations is, by \propref{prop:tbq},

\begin{align}
T\left(\f{\f{bq}{m}}{\f{d_{i}q}{m}}, \ \f{d_{i}q}{m}\right) = \f{\left(\f{n}{m}\right)!}{\left(\f{b}{d_{i}}\right)^{\f{d_{i}q}{m}}\left(\f{d_{i}q}{m}\right)!}.
\end{align}

The number of ways to place the elements of the remaining $d_{i}-1$ blocks after those of the first block is $((n/m)!)^{d_{i}-1}$, since each block contains $n/m$ elements.
Hence, the total number of ways to arrange the elements of these $d_{i}$ blocks is

\begin{align}
 \f{\left(\left(\f{n}{m}\right)!\right)^{d_{i}}}{\left(\f{b}{d_{i}}\right)^{\f{d_{i}q}{m}}\left(\f{d_{i}q}{m}\right)!}.
\end{align}

There are $t_{i}$ such types of blocks, and the number of possible assignments and orderings of blocks is given by \eqref{eq:mdt}.
Summing over all partitions $\{(d_{i}, t_{i})\}$ of $m$, we obtain

\begin{align}
I_{m}(b, q) = \sum_{\{(d_{i}, t_{i})\}}
\prod_{i}\f{m!}{d_{i}^{t_{i}}t_{i}!}\left(\f{\left(\left(\f{n}{m}\right)!\right)^{d_{i}}}{\left(\f{b}{d_{i}}\right)^{\f{d_{i}q}{m}}\left(\f{d_{i}q}{m}\right)!}\right)^{t_{i}}
\q (n = bq),
\end{align}
where the summation is taken over all partitions $\{(d_{i}, t_{i})\}$ of $m$ satisfying \eqref{eq:diti}.

The total products of the terms $(n/m)!$ and $b$ are

\begin{align}
&\prod_{i}\left(\left(\f{n}{m}\right)!\right)^{d_{i}t_{i}} = \left(\left(\f{n}{m}\right)!\right)^{\sum_{i}d_{i}t_{i}} = \left(\left(\f{n}{m}\right)!\right)^{m}, \\
&\prod_{i}\left(\f{1}{b^{\f{d_{i}q}{m}}}\right)^{t_{i}} = \left(\f{1}{b}\right)^{\f{q}{m}\sum_{i}d_{i}t_{i}} = \f{1}{b^{q}}.
\end{align}

Also, $m!$ can be factored out. Then we obtain

\begin{align}
I_{m}(b, q) = \f{m!}{b^{q}}\left(\left(\f{n}{m}\right)!\right)^{m}\sum_{\{(d_{i}, t_{i})\}}
\prod_{i}\f{1}{d_{i}^{t_{i}}t_{i}!}\left(\f{d_{i}^{\f{d_{i}q}{m}}}{\left(\f{d_{i}q}{m}\right)!}\right)^{t_{i}}.
\end{align}
\end{proof}

%%%%%%%%%%%%%%%%%%%%%%%%%%%%%%%%%%%%%%%%%%%%%%%%
\section{Genus 0}

From this section onward, we study the regularity and automorphism groups of uniform \ddes by genus.

In the case of uniform \ddes of genus~0, the situation is simple.

\begin{prop}
\label{prop:reggenus0}
Every uniform \dde of genus~$0$ is regular, and its automorphism group is isomorphic to
its monodromy group and has order~$n$ $($the number of edges$)$.
\end{prop}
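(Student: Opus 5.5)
The plan is to classify all uniform passports of genus~$0$ via \eqref{eq:genus-g} and then check each family directly. With $n = pa = qb = rc$, genus~$0$ means $p+q+r = n+2$. Dividing by $n$ gives
\begin{align}
\f{1}{a} + \f{1}{b} + \f{1}{c} = 1 + \f{2}{n} > 1.
\end{align}
This is the classical spherical triangle condition, so up to permuting $(a,b,c)$ the triple is one of the following:
\begin{itemize}
\item $(1, k, k)$ with $n = k$;
\item $(2, 2, k)$ with $n = 2k$;
\item $(2, 3, 3)$ with $n = 12$;
\item $(2, 3, 4)$ with $n = 24$;
\item $(2, 3, 5)$ with $n = 60$.
\end{itemize}
In each case $n$ is forced by the displayed identity; for instance, $(2,2,k)$ gives $1 + 1/k = 1 + 2/n$. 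Since both the monodromy group and the automorphism group are unchanged when black vertices, white vertices and faces are permuted, it is enough to treat one ordering in each case.

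By \lemref{lem:order-n} and \propref{prop:regaut}, it suffices to show $\lvert G \rvert = n$ for every dessin with such a passport. Regularity then gives $\AD \cong G$ and $\lvert\AD\rvert = n$. The group $G = \langle x, y\rangle$ satisfies $x^{a} = y^{b} = (xy)^{c} = 1$, because $x$, $y$ and $z=(xy)^{-1}$ have cycle types $(a^p)$, $(b^q)$ and $(c^r)$. Hence $G$ is a quotient of the spherical triangle group
\begin{align}
\Delta(a,b,c) = \langle X, Y \mid X^{a} = Y^{b} = (XY)^{c} = 1\rangle.
\end{align}
This group is finite, of order $2/(1/a+1/b+1/c-1) = n$ by the identity above. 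The five cases give orders $k$, $2k$, $12$, $24$, $60$: cyclic, dihedral, $A_4$, $S_4$, $A_5$. Thus $\lvert G\rvert \le n$. Since $G$ acts transitively on $n$ edges, $\lvert G \rvert \ge n$, so $\lvert G\rvert = n$.

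The main obstacle is justifying $\lvert \Delta(a,b,c)\rvert = n$ without heavy machinery. I would cite the classical fact about spherical von Dyck groups, that $\Delta(a,b,c)$ is the rotation group of the corresponding tessellation of the sphere. If a self-contained argument is preferred, the cases $(1,k,k)$ and $(2,2,k)$ can be checked by hand. In $(1,k,k)$ we have $x = 1$, so $G = \langle y\rangle$ with $y$ a $k$-cycle. In $(2,2,k)$ we have $G = \langle x, y\rangle$ dihedral of order dividing $2k$. For the three exceptional cases, $\lvert G\rvert \le \lvert\Delta\rvert$ still follows from the standard coset enumeration of these presentations. An alternative route avoiding group orders is an Euler-characteristic argument: a uniform genus~$0$ dessin lifts to the universal regular dessin of type $(a,b,c)$, and the degree count forces the covering to be trivial. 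I would use the triangle-group citation as the primary route.
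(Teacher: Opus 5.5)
Your proof is correct, but after the shared first step it argues differently from the paper. Both proofs begin by classifying the triples through $1/a+1/b+1/c = 1+2/n$.

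The paper then simply asserts (``it is easy to show'') that each resulting passport carries exactly one dessin: the star, the polygon, the tetrahedron, the cube and the icosahedron. It then reads off the monodromy groups $C_{n}$, $D_{m}$, $A_{4}$, $S_{4}$, $A_{5}$.

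You never enumerate dessins. The surjection $\Delta(a,b,c)\twoheadrightarrow G$, the order $\lvert\Delta(a,b,c)\rvert = 2/(1/a+1/b+1/c-1) = n$, and transitivity together give $\lvert G\rvert = n$ for \emph{every} dessin with the passport at once. \lemref{lem:order-n} and \propref{prop:regaut} then finish the proof.

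Your route is more uniform and more rigorous. It even supplies the uniqueness that the paper leaves unjustified. A surjection between groups of the same finite order is an isomorphism, so $G\cong\Delta(a,b,c)$ acts regularly on the edges. Hence the pair $(x,y)$ is, up to relabelling the edges, left multiplication by $(X,Y)$ on $\Delta(a,b,c)$, and the dessin is determined.

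The cost is an explicit citation of the orders of the spherical von Dyck groups. The paper relies on the same facts tacitly when it names the groups. Your argument is also the natural spherical analogue of the triangle-group reasoning the paper itself uses in genus~$1$.

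One step needs tightening. For $a=1$ the identity reads $1/b+1/c=2/n$, i.e.\ $q+r=2$, and this is what forces $b=c=n$. State this explicitly: for a general spherical triple $(1,k,l)$ the order formula fails, since $\lvert\Delta(1,k,l)\rvert=\gcd(k,l)$.
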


\begin{proof}
Let a uniform \dde of genus~$0$ have the passport $[a^{p}, b^{q}, c^{r}]$ with $n = pa = qb = rc$.
Since regularity is invariant under permutations of black vertices, white vertices, and faces, we may assume
$c \ge a \ge b$ (and hence $r \le p \le q$).

Since the genus is $0$, by \eqref{eq:genus-g} we have
\begin{align}
p + q + r = \f{n}{a} + \f{n}{b} + \f{n}{c} = n + 2,
\end{align}
and hence

\begin{align}
\f{1}{a} + \f{1}{b} + \f{1}{c} = 1 + \f{2}{n} > 1.
\end{align}
Since $c \ge a \ge b$, it follows that $b < 3$, and therefore $b = 1$ or $b = 2$.

When $b = 1$, we have

\begin{align}
\f{1}{a} + \f{1}{c} = \f{2}{n},
\end{align}
which implies

\begin{align}
\f{p}{n} + \f{r}{n} = \f{2}{n},
\end{align}
and hence

\begin{align}
p = r = 1.
\end{align}
Therefore, the passport is $[n, 1^{n}, n]$.

When $b = 2$, the integer $n$ is even, and we have
\begin{align}
\f{1}{a} + \f{1}{c} = \f{1}{2} + \f{2}{n} > \f{1}{2}.
\end{align}
Since $2 \le a \le c$, it follows that $a = 2$ or $a = 3$.

\begin{itemize}
\item When $a = 2$, we have $c = n/2$, and hence the passport is $[2^{m}, 2^{m}, m^{2}]$ (with $n = 2m$).
\item When $a = 3$, we have $c = 6n/(n+12)$. The pairs $(n, c)$ satisfying this equation are $(12, 3)$, $(24, 4)$, and $(60, 5)$.
\end{itemize}

From the above, all uniform \ddes of genus~0 are included in the following:

\begin{align}
\label{g0-unip}
[n, 1^{n}, n],\ [2^{m}, 2^{m}, m^{2}]\ (n = 2m),\ [3^{4}, 2^{6}, 3^{4}],\ [3^{8}, 2^{12}, 4^{6}],\ [3^{20}, 2^{30}, 5^{12}].
\end{align}

The dessins corresponding to these passports are shown in \figref{fig:genus0}.
They correspond to the shapes of a star, an $n$-gon, a tetrahedron, a hexahedron, and an icosahedron,
respectively.

\begin{figure}[htbp]
\centering
\includegraphics[width=150mm]{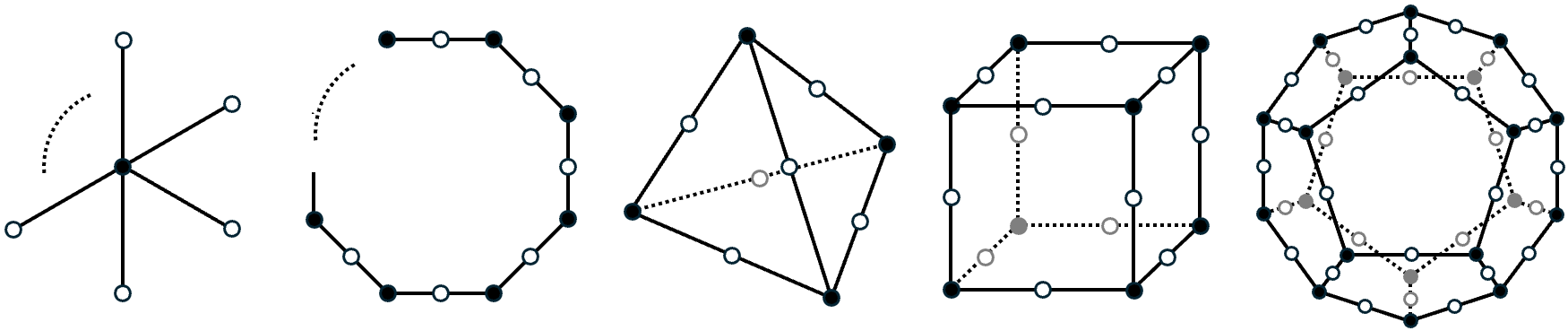} 
\caption{Uniform \ddes of genus~0}
\label{fig:genus0}
\end{figure}

It is easy to show that no other dessins exist for the passports listed in \eqref{g0-unip}, and that their monodromy groups and automorphism groups are isomorphic.
These groups are $C_{n}$ (the cyclic group of order~$n$), $D_{m}$ (the dihedral group of order~$2m$), $A_{4}$ (the alternating group of degree~$4$), $S_{4}$ (the symmetric group of degree~$4$), and $A_{5}$ (the alternating group of degree~$5$),
respectively\footnote{By exchanging black vertices and faces, the passport $[4^{6}, 2^{12}, 3^{8}]$ corresponds to
an octahedron and $[5^{12}, 2^{30}, 3^{20}]$ to a dodecahedron; their monodromy (and hence automorphism)
groups are $S_{4}$ and $A_{5}$, respectively.}.
Therefore, all uniform \ddes of genus~$0$ are regular.

Moreover, by \propref{prop:regaut} and \lemref{lem:order-n}, it follows that
the automorphism group of every uniform \dde of genus~$0$ is isomorphic to its
monodromy group and has order~$n$.
\end{proof}
\HL

Thus, when restricted to genus~0, regularity and uniformity are equivalent.

%%%%%%%%%%%%%%%%%%%%%%%%%%%%%%%%%%%%%%%%%%%%%%%%
\section{Genus 1}

An explicit analysis of regularity and automorphism groups of genus-1 dessins indicates that,
except for passports containing a partition of the form $n^{1}$, all such passports appear to admit
non-regular dessins, while trivial automorphism groups appear not to arise in genus~1.
In this section, we prove these statements.

In general, for a \dde $\msD$ whose valencies of the black vertices, the white vertices, and the faces have
least common multiples $a$, $b$, and $c$, respectively, we consider the triangle group

\begin{align}
\Delta = \Delta(a, b, c) = \langle X, Y, Z \mid X^{a} = Y^{b} = Z^{c} = XYZ = 1\rangle.
\end{align}
Then, for the monodromy group $G = \langle x, y\rangle$ of $\msD$ defined in \defref{def:monog},
one can define an action of $\Delta$ on $\msD$ via the group homomorphism

\begin{align}
\vp\colon X \mapsto x,\ Y \mapsto y,\ Z \mapsto (xy)^{-1},
\end{align}
so that $G$ may be viewed as a quotient of $\Delta$.

Let $\Delta_{e} \ceq \ker \vp$. Then $\Delta_{e}$ is the subgroup of $\Delta$ consisting of all elements that fix
every edge of $\msD$, and it satisfies

\begin{align}
&[\Delta : \Delta_{e}] < \infty.
\end{align}
Moreover,

\begin{align}
\label{eq:regd}
&\msD \text{ is regular} \ \Lra \ \Delta_{e} \triangleleft \Delta.
\end{align}
See \cite[Theorem~3.13]{Jones16}.

For a \dde $\msD$ with a uniform passport $[a^{p}, b^{q}, c^{r}]$ with $n = pa = qb = rc$, the corresponding triangle
group is $\Delta(a,b,c)$.
If $\msD$ has genus~$1$, then by \eqref{eq:genus-g} we have

\begin{align}
p + q + r = \f{n}{a} + \f{n}{b} + \f{n}{c} = n,
\end{align}
and hence

\begin{align}
\f{1}{a} + \f{1}{b} + \f{1}{c} = 1.
\end{align}
This case is referred to as the Euclidean case of triangle groups. In this situation, $\Delta$ is infinite, and
$\Delta_{e}$ is torsion-free (that is, it contains no nontrivial element of finite order).

Assuming $a \le b \le c$, the only possible triples are
\begin{align}
(a, b, c) = (2, 4, 4),\ (3, 3, 3),\ (2, 3, 6).
\end{align}
Hence the uniform passports of genus~1 are exactly

\begin{align}
[2^{2m}, 4^{m}, 4^{m}],\ [3^{m}, 3^{m}, 3^{m}],\ [2^{3m}, 3^{2m}, 6^{m}]\q (m \in \Zp),
\end{align}
together with the ones obtained from these by permuting the three parts.

For these passports, if we set

\begin{align}
d \ceq \max\{ a, b, c \},
\end{align}
then $d=4,3,6$, respectively, and the number of edges of the dessin
(equivalently, the degree of the associated \belyi function) is always $md$.

Consider a triangle $ABC$ in the Euclidean plane with angles
$\angle A = \pi/a$, $\angle B = \pi/b$, and $\angle C = \pi/c$.
The generators $X$, $Y$, $Z$ of the triangle group $\Delta$ correspond to
the following rotations of the whole plane:

\begin{align}
&X\colon \text{rotation by }\f{2\pi}{a}\text{ about the point }A,\\
&Y\colon \text{rotation by }\f{2\pi}{b}\text{ about the point }B,\\
&Z\colon \text{rotation by }\f{2\pi}{c}\text{ about the point }C.
\end{align}

Moreover, $\Delta$ contains as a normal subgroup the group $T\cong \Z^{2}$
consisting of all translations of the plane.

We fix the vertices as $A=(0,0)$ and $B=(1,0)$, and choose the remaining vertex $C$,
the translation subgroup $T$, and the quotient $\Delta/T$ as follows:

\begin{itemize}
\item $\Delta = \Delta(2, 4, 4)$:
\begin{align}
\label{eq:deft4}
C\colon (0, 1), \q T\colon (u, v) \mapsto (u + 2k,\ v + 2l) \ (k, l \in \Z), \q \Delta / T \cong C_{4}
\end{align}
\item $\Delta = \Delta(3, 3, 3)$:
\begin{align}
\label{eq:deft3}
&C\colon \left(\f{1}{2}, \f{\sqrt{3}}{2}\right), \q T\colon (u, v) \mapsto \left(u + \f{3}{2}(k+l),\ v + \f{\sqrt{3}}{2}(k-l)\right) \ (k, l \in \Z),\\
&\Delta / T \cong C_{3}
\end{align}
\item $\Delta = \Delta(2, 3, 6)$:
\begin{align}
\label{eq:deft6}
&C\colon \left(0, \sqrt{3}\right), \q T\colon (u, v) \mapsto (u + 3(k+l),\ v + \sqrt{3}(k-l)) \ (k, l \in \Z), \\
&\Delta / T \cong C_{6}
\end{align}
\end{itemize}

In each case, the elements of $\Delta$ fall into the following categories:

\begin{enumerate}
\item\label{itm:delta1} the identity transformation;
\item\label{itm:delta2} rotations about a point;
\item\label{itm:delta3} translations (elements of $T \setminus \{ \id \}$);
\item\label{itm:delta4} rotations about a point composed with a translation,
which are equivalently rotations about some other point.
\end{enumerate}

Since the elements in \ref{itm:delta1}, \ref{itm:delta2}, and \ref{itm:delta4} have finite order,
every torsion-free subgroup of $\Delta$ must be contained in $T$.
Hence $\Delta_{e} \le T$, and therefore

\begin{align}
[\Delta : \Delta_{e}] = [\Delta : T] \cdot [T : \Delta_{e}] = d \cdot [T : \Delta_{e}].
\end{align}
On the other hand, it is known that $[\Delta : \Delta_{e}]$ equals the number of edges of the dessin. Hence

\begin{align}
[\Delta : \Delta_{e}] = md.
\end{align}
Combining these equalities yields

\begin{align}
\label{eq:tdm}
[T : \Delta_{e}] = m.
\end{align}

Using the results above, we obtain the following proposition.

\begin{prop}
\label{prop:reggenus1}
A uniform passport of genus~$1$ admits a non-regular dessin if and only if it does not contain an element of type $n^{1}$.

In other words, the genus-$1$ uniform passports
$[2^{2m}, 4^{m}, 4^{m}]$,
$[3^{m}, 3^{m}, 3^{m}]$,
$[3^{2m}, 2^{3m}, 6^{m}]$,
and those obtained by permuting their entries
admit a non-regular dessin if and only if $m \ge 2$.
\end{prop}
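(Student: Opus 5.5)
The plan is to work with the stabilizer of a single edge rather than with $\Delta_{e}$.

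\emph{Step 1 (edge stabilizers).} For a dessin $\msD$ with uniform passport of genus~$1$, $\Delta=\Delta(a,b,c)$ acts transitively on the $n=md$ edges through $\vp$. Let $H\le\Delta$ be the stabilizer of a fixed edge, so $[\Delta:H]=n$.

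Uniformity means every black vertex has valency exactly $a$, and similarly for $b$ and $c$. So no nontrivial power $X^{k}$ with $0<k<a$, or any conjugate of it, fixes an edge, and likewise for $Y$ and $Z$. Every elliptic element of $\Delta$ is conjugate to such a power, so $H$ contains no elements of finite order. Since all nontrivial elements of types \ref{itm:delta2} and \ref{itm:delta4} have finite order, $H\le T$, and $[T:H]=n/d=m$.

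Conversely, let $H\le T$ be any subgroup of index $m$. The action of $\Delta$ on $\Delta/H$ is transitive and has $md$ points. Every $\langle X\rangle$-orbit has length exactly $a$, because $H$ and all its conjugates (which lie in $T$) are torsion-free; the same holds for $\langle Y\rangle$ and $\langle Z\rangle$. Hence, by the correspondence between transitive two-generated permutation groups and dessins, this action gives a dessin with the required uniform passport.

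Finally, the kernel of the action is $\Delta_{e}=\bigcap_{g}gHg^{-1}$. Using \eqref{eq:regd}, I will check that $\msD$ is regular if and only if $H\triangleleft\Delta$. Indeed, $\Delta_{e}\triangleleft\Delta$ always holds, and $\lvert G\rvert=[\Delta:\Delta_{e}]=n$ is equivalent to $\Delta_{e}=H$ by \lemref{lem:order-n}. Conjugation by $\Delta$ acts on $T\cong\Z^{2}$ through $\Delta/T$, which is a rotation group $C_{4}$, $C_{3}$ or $C_{6}$. So regularity is equivalent to the sublattice $H$ being invariant under the rotation $R$ generating $\Delta/T$.

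\emph{Step 2 ($m=1$).} Here $H=T$, which is normal, so every dessin with the passport is regular. I will also verify that $m=1$ is exactly the case in which the passport contains $n^{1}$. For example, $4^{m}=n^{1}$ in $[2^{2m},4^{m},4^{m}]$ iff $m=1$, and $2^{3m}$, $3^{2m}$ never equal $n^{1}$. The same check works for the other two families and is unaffected by permuting the parts.

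\emph{Step 3 ($m\ge2$).} Write $T=\Z e_{1}\oplus\Z e_{2}$ using the basis from \eqref{eq:deft4}--\eqref{eq:deft6}, and take $H=\Z e_{1}\oplus m\Z e_{2}$, which has index $m$. The rotation $R$ (by $\pi/2$, $2\pi/3$ or $\pi/3$) sends $e_{1}$ to a vector whose $e_{2}$-coordinate is $\pm1$. In the square case $Re_{1}=e_{2}$; in the hexagonal cases I will write $R$ as an integer matrix in the basis $e_{1},e_{2}$ and read off the entry. Since $m\ge2$, $Re_{1}\notin H$, so $H$ is not normal and the corresponding dessin is non-regular.

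The step I expect to need the most care is Step 1: justifying that torsion-freeness of $H$ yields exactly the uniform valencies, and that regularity is equivalent to $H\triangleleft\Delta$. I will argue both via point stabilizers and cosets, as sketched above.
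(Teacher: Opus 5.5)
Your proposal is correct and follows essentially the same route as the paper: edge stabilizers lie in the translation subgroup $T$ with index $m$, the case $m=1$ forces $H=T\triangleleft\Delta$, and for $m\ge 2$ an explicit coordinate sublattice fails to be invariant under the rotation generating $\Delta/T$. Your treatment is also more careful in two places. You work with the edge stabilizer $H$ and take $\Delta_e=\bigcap_g gHg^{-1}$ as the kernel; this is the correct reading, since the paper defines $\Delta_e=\ker\vp$, which is always normal and makes \eqref{eq:regd} vacuous as literally stated. You also verify that every index-$m$ subgroup of $T$ actually realizes the uniform passport, a step the paper leaves tacit.
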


\begin{proof}
When $m = 1, $ \eqref{eq:tdm} implies $\Delta_{e} = T$. Since $T \triangleleft \Delta$, by \eqref{eq:regd},
the corresponding dessin is always regular.

When $m \ge 2$, \eqref{eq:tdm} shows that $\Delta_{e}$ is a subgroup of $T$ of index~$m$.
Thus, in each of \eqref{eq:deft4}, \eqref{eq:deft3}, and \eqref{eq:deft6}, we may take the subgroup $T'$
obtained by replacing $k$ with $mk'$ ($k' \in \Z$), regard it as $\Delta_{e}$, and consider the dessin
$\msD$ corresponding to this subgroup. In this situation, by \eqref{eq:regd} and the definition
of a normal subgroup, we have

\begin{align}
\msD\text{ is regular} \q \Lra \q T' \triangleleft \Delta
\q \Lra \q \text{for all }g \in \Delta \text{ and all } h \in T',\ ghg^{-1} \in T'.
\end{align}

Taking $g$ to be $Z$ (the rotation about the point $C$ by angle $2\pi/c$) and taking $h$ to be the element of
$T'$ obtained by setting $k=m$, $l=1$ in each of \eqref{eq:deft4}, \eqref{eq:deft3}, and \eqref{eq:deft6},
we compute $ghg^{-1}$ as follows:

\begin{itemize}
\item $\Delta = \Delta(2, 4, 4)$:
\begin{align}
&g\colon (u, v) \mapsto (-v+1,\ u+1), \\
&h\colon (u, v) \mapsto (u + 2m,\ v+2), \\
&ghg^{-1}\colon (u, v) \mapsto (u-2,\ v+2m)
\end{align}
Since $m\ge 2$, $u-2$ cannot be written as $u + 2mk'$ with $k' \in \Z$. Hence $ghg^{-1} \notin T'$.
\item $\Delta = \Delta(3, 3, 3)$:
\begin{align}
&g\colon (u, v) \mapsto \left(-\f{1}{2}u-\f{\sqrt{3}}{2}v+\f{3}{2},\ \f{\sqrt{3}}{2}u-\f{1}{2}v+\f{\sqrt{3}}{2}\right), \\
&h\colon (u, v) \mapsto \left(u + \f{3}{2}(m+1),\ v + \f{\sqrt{3}}{2}(m-1)\right), \\
&ghg^{-1}\colon (u, v) \mapsto \left(u-\f{3}{2}m,\ v+\f{\sqrt{3}}{2}(m+2)\right)
\end{align}
Thus the corresponding translation parameters satisfy $k+l=-m$ and $k-l=m+2$, hence $k=1$.
Since $m \ge 2$, this cannot be expressed in the form $k = mk'$ with $k' \in \Z$; therefore $ghg^{-1} \notin T'$.
\item $\Delta = \Delta(2, 3, 6)$:
\begin{align}
&g\colon (u, v) \mapsto \left(\f{1}{2}u-\f{\sqrt{3}}{2}v+\f{3}{2},\ \f{\sqrt{3}}{2}u+\f{1}{2}v+\f{\sqrt{3}}{2}\right), \\
&h\colon (u, v) \mapsto \left(u + 3(m+1),\ v + \sqrt{3}(m-1)\right), \\
&ghg^{-1}\colon (u, v) \mapsto \left(u+3,\ v+(2m+1)\sqrt{3}\right)
\end{align}
Here the translation satisfies $k+l=1$ and $k-l=2m+1$, hence $k=m+1$.
Since $m \ge 2$, this cannot be expressed in the form $k = mk'$ with $k' \in \Z$; therefore $ghg^{-1} \notin T'$.
\end{itemize}

In each case we conclude that $T'$ is not normal in $\Delta$. Hence, by \eqref{eq:regd}, the corresponding
dessin $\msD$ is not regular. This proves that for every $m \ge 2$, for each of \eqref{eq:deft4}, \eqref{eq:deft3},
and \eqref{eq:deft6}, a non-regular dessin of genus~$1$ with the given uniform passport exists.

Combining all the above, we conclude that the existence of a non-regular dessin is equivalent to
the condition $m \ge 2$.
\end{proof}

For $m \ge 2$, a regular dessin does not necessarily exist.
There are genus-1 passports for which only non-regular dessins exist; see \exref{ex:c15} below.

%%%%%%%%%%%%

For the automorphism group $\AD$ of a dessin $\msD$, it is known that

\begin{align}
\label{eq:nautd}
\Aut \msD \cong N_{\Delta}(\Delta_{e}) / \Delta_{e},
\end{align}
where

\begin{align}
N_{\Delta}(\Delta_{e}) \ceq \{ g \in \Delta \mid g\Delta_{e} g^{-1}=\Delta_{e} \}
\end{align}
is the normalizer of $\Delta_{e}$ in $\Delta$ (see \cite[Theorem 2.2]{Jones16}).

Using this, we obtain the following proposition.

\begin{prop}
\label{prop:autgenus1}
For any uniform \dde $\msD$ of genus~$1$, we have $\AD \ncong \{1\}$.
\end{prop}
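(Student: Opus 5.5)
The plan is to use the description $\Aut \msD \cong N_{\Delta}(\Delta_{e})/\Delta_{e}$ from \eqref{eq:nautd}, together with the facts established above for genus~$1$. These are that $\Delta_{e} \le T$ with $T \cong \Z^{2}$ the translation subgroup, and that $T$ is abelian and normal in $\Delta$. It then suffices to exhibit an element of $N_{\Delta}(\Delta_{e})$ that does not lie in $\Delta_{e}$.

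The key observation is that, since $T$ is abelian and $\Delta_{e} \le T$, every element $t \in T$ satisfies $t\Delta_{e}t^{-1} = \Delta_{e}$. Hence
\begin{align}
T \le N_{\Delta}(\Delta_{e}),
\end{align}
and therefore $T/\Delta_{e}$ embeds in $\Aut \msD$. By \eqref{eq:tdm} we have $[T:\Delta_{e}] = m$. So whenever $m \ge 2$ we get $\lvert \AD \rvert \ge m \ge 2$, and the automorphism group is nontrivial.

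The remaining case is $m = 1$. There $\Delta_{e} = T$ is normal in $\Delta$, so $\msD$ is regular by \eqref{eq:regd}. Then $\lvert \AD \rvert = n = d \ge 3$ by \propref{prop:regaut} and \lemref{lem:order-n}, so again $\AD \ncong \{1\}$. Alternatively, and uniformly in $m$, $\lvert \AD \rvert = [N_{\Delta}(\Delta_{e}):\Delta_{e}] \ge [T:\Delta_{e}] \cdot [N_{\Delta}(\Delta_{e}):T]$, which handles both cases once we know $N_{\Delta}(\Delta_{e}) \supseteq T$ and $\Delta_{e} \subseteq T$. For $m = 1$ the factor $[N_{\Delta}(\Delta_{e}):T]$ equals $d$.

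The argument is independent of which of the three Euclidean triangle groups occurs and of how the entries of the passport are permuted. This is because $\Aut \msD$ is invariant under such permutations, and the containment $\Delta_{e} \le T$ was derived for all three cases. The only step needing care is the claim $\Delta_{e} \le T$. It rests on $\Delta_{e}$ being torsion-free, which holds because $\Delta_{e}$ fixes every edge and so meets each finite cyclic (rotation) subgroup trivially; the preceding discussion already provides this. I therefore expect no serious obstacle beyond recording that $T$ is abelian.
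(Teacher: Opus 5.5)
Your proposal is correct and is essentially the paper's proof. The case $m=1$ is settled by regularity, and for $m\ge 2$ one uses $\Delta_{e}\le T\le N_{\Delta}(\Delta_{e})$, so that $T/\Delta_{e}$, of order $m$, embeds in $\Aut\msD\cong N_{\Delta}(\Delta_{e})/\Delta_{e}$.

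Your reason for $T\le N_{\Delta}(\Delta_{e})$, namely that $T$ is abelian and contains $\Delta_{e}$, is more accurate than the paper's wording that $T$ commutes with every element of $\Delta$. Two side remarks need repair, though. For $[\Delta:\Delta_{e}]=n$ and \eqref{eq:nautd} to hold, $\Delta_{e}$ must be the stabilizer of a single edge, not the subgroup fixing every edge. Its torsion-freeness therefore comes from uniformity: $hX^{k}h^{-1}$ fixes $e$ only if $x^{k}$ fixes $h^{-1}\cdot e$, which forces $a\mid k$.
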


\begin{proof}

As described above, the passports of genus-1 dessins are exactly
$[2^{2m}, 4^{m}, 4^{m}]$, $[3^{m}, 3^{m}, 3^{m}]$, and $[2^{3m}, 3^{2m}, 6^{m}]$
together with those obtained by permuting their entries, and they satisfy
$[T : \Delta_{e}] = m$.

For $m = 1$, \propref{prop:reggenus1} implies that $\msD$ is regular.
Therefore, by \propref{prop:regaut}, $\Aut \msD$ is isomorphic to the monodromy group of $\msD$.
Moreover, by \lemref{lem:order-n}, the order of the monodromy group equals the number of edges of the dessin.
Thus $\lvert \AD \rvert > 1$, that is, $\AD \ncong \{1\}$.

For $m \ge 2$, since $T$ commutes with every element of $\Delta$, we have
$g \Delta_{e} g^{-1} = \Delta_{e}$ for any $g \in T$. Hence

\begin{align}
T \le N_{\Delta}(\Delta_{e}).
\end{align}
Together with \eqref{eq:nautd}, this implies

\begin{align}
\Aut \msD \cong N_{\Delta}(\Delta_{e}) / \Delta_{e} \ge T / \Delta_{e}.
\end{align}
Since $\lvert T / \Delta_{e} \rvert = m \ge 2$, it follows that $\lvert \AD \rvert \ge 2$, and therefore
$\Aut \msD \not\cong \{ 1 \}$.

Thus, for all $m \ge 1$, we have $\Aut \msD \ncong \{1\}$.
\end{proof}

%%%%%%%%%%%%%%%%%%%%%%%%%%%%%%%%%%%%%%%%%%%%%%%%
\section{Regularity Criteria in Arbitrary Genus}

In this section, we describe two genus-independent criteria for the existence of regular dessins
associated with uniform passports.

In what follows, let $\sigma_{n} \ceq \sigman \in S_{n}$.

%%%%%%%%%%%%%%%%%%%%%%%%%%%%%%%%%%%%
\subsection{Tree Cases}

A passport $[\lambda_{0}, \lambda_{1}, \lambda_{\infty}]$ is called a \emph{tree} if at least one of the $\lambda_{i}$ is $n^{1}$.

\begin{thm}
\label{thm:trees}
\thmtrees
\end{thm}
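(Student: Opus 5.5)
The plan is to reduce everything to cyclic groups. I treat the passport $[a^{p}, b^{q}, n]$ first and obtain the other two orderings from the symmetry among $x$, $y$, $z=(xy)^{-1}$: both regularity and the monodromy group are unchanged when black vertices, white vertices and faces are permuted.

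\emph{Necessity.} Let $\msD$ be a regular dessin with passport $[a^{p}, b^{q}, n]$. By \lemref{lem:order-n}, its monodromy group $G$ has order $n$. The face permutation $z=(xy)^{-1}$ is an $n$-cycle, so it has order $n$, and therefore $G=\langle z\rangle\cong C_{n}$. Since $G$ acts regularly, we may identify $E$ with $\Z/n\Z$ and let $G$ act by translations.
\begin{itemize}
\item The permutation $x$ has $p$ cycles of length $a$, so as an element of $\Z/n\Z$ it has order $a$. Hence $x=pu$ with $\gcd(u,a)=1$.
\item Likewise $y=qv$ with $\gcd(v,b)=1$.
\item Since $xy$ has order $n$, the element $pu+qv$ is a unit modulo $n$.
\end{itemize}
Put $d=\gcd(p,q)$. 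Then $d$ divides both $pu+qv$ and $n$, so $d=1$.

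\emph{Sufficiency.} Assume $\gcd(p,q)=1$. I will find units $u$ modulo $a$ and $v$ modulo $b$ with $\gcd(pu+qv,n)=1$. Then I take $x,y$ to be the translations by $pu$ and $qv$ on $\Z/n\Z$.
\begin{itemize}
\item The group $\langle x,y\rangle$ contains the generator $xy$, so it is all of $C_{n}$, acting regularly.
\item By \cite[Theorem~3.6]{Scodro24}, this transitive pair gives a dessin, which is regular by definition.
\item Its passport is $[a^{p}, b^{q}, n]$, because a translation of order $k$ on $\Z/n\Z$ has cycle type $(k^{n/k})$.
\end{itemize}

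To choose $u$ and $v$, I work prime by prime over the primes $r\mid n$ and then glue the choices together by the Chinese remainder theorem. The congruence conditions for different primes are compatible, since a unit condition modulo $a$ is a condition at each prime dividing $a$, and the conditions we need at $r$ concern only residues modulo $r$.
\begin{itemize}
\item If $r\mid p$, then $r\nmid q$. From $n=qb$ we get $r\mid b$, so any unit $v$ modulo $b$ satisfies $r\nmid v$. Hence $r\nmid pu+qv$ automatically.
\item The case $r\mid q$ is symmetric.
\item If $r\nmid pq$, then $r$ divides both $a$ and $b$, so $u$ and $v$ must be nonzero modulo $r$. We need $pu+qv\not\equiv 0\pmod r$. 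For $r\ge 3$, fix $u\equiv 1$; among the $r-1\ge 2$ nonzero residues for $v$ we avoid the single bad value $-pq^{-1}$.
\end{itemize}

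The main obstacle is the prime $r=2$ with $p,q$ odd. There $pu+qv$ is a sum of two odd numbers, hence even. I rule this case out using the genus formula \eqref{eq:genus-g}, which gives $p+q+1\equiv n\pmod 2$. If $p$ and $q$ are both odd, then $n$ is odd, so $2\nmid n$ and the case never arises.

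Finally, because $C_{n}$ is abelian, the same construction handles the other two orderings. Reordering the passport amounts to relabelling which of $x$, $y$, $z$ plays which role, and the $n$-cycle can be placed in any position.
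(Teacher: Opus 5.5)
Your proof is correct and follows essentially the same approach as the paper. Regularity forces $G\cong C_n$, and writing $x,y$ as multiples of $p,q$ in $\Z/n\Z$ gives necessity. For sufficiency you make a prime-by-prime choice, using the genus parity $p+q+1\equiv n \pmod{2}$ to exclude the troublesome prime $2$. The one difference is that you only ask $pu+qv$ to be a unit modulo $n$, whereas the paper normalizes $xy=\sigma_n$ and solves the B\'ezout identity $lp+mq=n+1$ along the family $(l_0-kq,\,m_0+kp)$; your relaxation decouples the local conditions and makes the Chinese remainder step more direct.
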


\begin{proof}
As regularity is symmetric with respect to black vertices, white vertices, and faces, it suffices to prove
the statement for $[a^{p}, b^{q}, n]$ ($p, q \ge 1$).

By \eqref{eq:genus}, the genus is $(n - (p + q) + 1)/2$. Hence $n - (p + q)$ is odd.

Since $xy$ and $(xy)^{-1}$ have the same cycle type, it follows that $xy$ is of cycle type $(n)$.
Hence we may assume $xy = \sigma_n$.
Since $\langle \sigma_{n} \rangle \cong C_{n}$ and $\langle \sigma_{n} \rangle \le G$, and since a regular dessin satisfies $\lvert G \rvert = n$ by \lemref{lem:order-n}, it follows that if the dessin is regular, then
$G = \langle \sigma_{n} \rangle \cong C_{n}$.
Consequently, both $x$ and $y$ must lie in $\langle \sigma_{n} \rangle$.

Therefore,

\begin{align}
\text{The dessin is regular} \Lra{}&x, y \in \langle \sigma_{n} \rangle, \ x\colon \text{type }(a^{p}), \ y\colon \text{type }(b^{q}),\ xy = \sigma_{n} \\
\Lra{}&\text{There exist }l, m \text{ such that }1 \le l < a, \ 1 \le m < b, \\
&x = \sigma_{n}^{lp}, \ \gcd(a, l) = 1, \ y = \sigma_{n}^{mq},\ \gcd(b, m) = 1, \\
\label{eq:reglm}
&lp+mq \equiv 1 \npmod{n}.
\end{align}

When the dessin is regular, since $lp + mq \equiv 1 \pmod{n}$ and $p, q \mid n$, it follows that $\gcd(p, q) = 1$.

Conversely, assume that $\gcd(p, q) = 1$.
Since $n = pa = qb$, we have $q \mid a$ and $p \mid b$.
Let $a = uq$ with $u \in \Zp$. Then we have $n = upq$ and $b = up$.

Moreover, since $\gcd(p, q) = 1$, there exist $\A, \B \in \Z$ such that $p\A + q\B = n + 1$.
For these, we have

\begin{align}
p(\A - kq) + q(\B + kp) = n + 1 \quad \text{for all } k \in \Z.
\end{align}

Let $k_{0}$ be the smallest $k$ such that $1 \le \A - kq < a$ and $1 \le \B + kp < b$, and set
$l_{0} = \A - k_{0}q$, $m_{0} = \B + k_{0}p$. Then we obtain

\begin{align}
l_{0}p + m_{0}q &= n+1, \\
(l_{0}-q)p + (m_{0}+p)q &= n+1, \\
&\cdots \\
(l_{0}-(u-1)q)p + (m_{0}+(u-1)p)q &= n+1,
\end{align}
that is, $u$ such equations.

The $u$ pairs of coefficients $(l_{0} - kq, m_{0} + kp)$ ($0 \le k \le u - 1$) all satisfy
$1 \le l_{0} - kq < a$ and $1 \le m_{0} + kp < b$.

To establish \eqref{eq:reglm}, it suffices to show that there exists some $k_{1}$ such that

\begin{align}
\gcd(a, l_{0} - k_{1}q) = \gcd(b, m_{0} + k_{1}p) = 1,\  0 \le k_{1} \le u - 1.
\end{align}

When $(l_{0} - kq)p + (m_{0} + kp)q = n + 1$, we have $(l_{0} - kq)p \equiv 1 \pmod{q}$, and hence $\gcd(q, l_{0} - kq) = 1$.
Since $a = uq$, if $\gcd(u, l_{0} - kq) = 1$, then $\gcd(a, l_{0} - kq) = 1$ also holds.

Similarly, if $\gcd(u, m_{0} + kp) = 1$, then we have $\gcd(b, m_{0} + kp) = 1$.

Therefore, it suffices to show that there exists $k$ such that

\begin{align}
\label{eq:reglm2}
\gcd(u, l_{0} - kq) = \gcd(u, m_{0} + kp) = 1, \quad 0 \le k \le u - 1.
\end{align}

First, when $u = 1$,
%we have $\gcd(1, l_{0}) = \gcd(1, m_{0}) = 1$, therefore
$k = 0$ satisfies \eqref{eq:reglm2}.

When $u \ge 2$, let the prime factors of $u$ be $p_{1}, \dotsc, p_{s}$ ($s \ge 1$, $p_{1} < \cdots < p_{s}$).
Since $\gcd(p, q) = 1$, no $p_{i}$ divides both $p$ and $q$. Hence, we can choose $0 \le k_{1} \le u - 1$ such that

\begin{align}
\text{for all } 1 \le i \le s, \quad
\label{eq:k1}
\begin{dcases}
k_{1} \not\equiv l_{0} q^{-1} \npmod{p_{i}} & (p_{i} \mid p,\ p_{i} \nmid q) \\
k_{1} \not\equiv -m_{0} p^{-1} \npmod{p_{i}} & (p_{i} \nmid p,\ p_{i} \mid q) \\
k_{1} \not\equiv l_{0} q^{-1},\ -m_{0} p^{-1} \npmod{p_{i}} & (p_{i} \nmid p, q)
\end{dcases}.
\end{align}

Note that if $p_{i} = 2$, then $u$ is even, and hence $n$ is even.
Since $n - (p + q)$ is odd, it follows that one of $p$ or $q$ must be even.
Therefore, if $p_{i} \nmid p, q$, we must have $p_{i} \ge 3$, and hence a $k_{1}$ satisfying
\eqref{eq:k1} can always be chosen.

For this $k_{1}$ and each $p_{i}$, we have:

\begin{itemize}
\item When $p_{i} \mid p$ and $p_{i} \nmid q$, \\
$l_{0} - k_{1} q \not\equiv l_{0} - l_{0} q^{-1} q \equiv 0 \pmod{p_{i}}$, \\
and from $(l_{0} - k_{1} q)p + (m_{0} + k_{1} p)q = n + 1$, we obtain $(m_{0} + k_{1} p)q \equiv 1 \pmod{p_{i}}$,
hence $m_{0} + k_{1} p \not\equiv 0 \pmod{p_{i}}$.
\item When $p_{i} \nmid p$ and $p_{i} \mid q$, \\
$m_{0} + k_{1} p \not\equiv m_{0} - m_{0} p^{-1} p \equiv 0 \pmod{p_{i}}$, \\
and from $(l_{0} - k_{1} q)p + (m_{0} + k_{1} p)q = n + 1$, we obtain $(l_{0} - k_{1} q)p \equiv 1 \pmod{p_{i}}$,
hence $l_{0} - k_{1} q \not\equiv 0 \pmod{p_{i}}$.
\item When $p_{i} \nmid p, q$, \\
$l_{0} - k_{1} q \not\equiv l_{0} - l_{0} q^{-1} q \equiv 0 \pmod{p_{i}}$, \\
$m_{0} + k_{1} p \not\equiv m_{0} - m_{0} p^{-1} p \equiv 0 \pmod{p_{i}}$.
\end{itemize}

Hence, for every prime divisor $p_{i}$ of $u$, we have $p_{i} \nmid l_{0} - k_{1} q$ and $p_{i} \nmid m_{0} + k_{1} p$,
thus $k = k_{1}$ satisfies \eqref{eq:reglm2}.

Therefore, when $\gcd(p, q) = 1$, a regular dessin exists.
\end{proof}

In genus~$0$, the tree case corresponds to $[n, 1^{n}, n]$.
Moreover, the following result holds from \thmref{thm:trees}.

\begin{cor}
The passport $[n, b^{q}, n]$ always admits a regular dessin.
On the other hand, the passport $[a^{p}, a^{p}, n]$ admits a regular dessin if and only if $p = 1$, equivalently $a = n$.
\end{cor}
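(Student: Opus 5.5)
The corollary follows directly from \thmref{thm:trees}, so I would simply specialize the exponents $p$ and $q$ in the passport $[a^{p}, b^{q}, n]$ and read off the condition $\gcd(p,q)=1$. No new construction is needed; the work is already done in the proof of \thmref{thm:trees}.

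For the first assertion, I regard $[n, b^{q}, n]$ as the tree passport $[a^{p}, b^{q}, n]$ with $a = n$ and $p = 1$. Since $\gcd(1, q) = 1$ for every $q \ge 1$, \thmref{thm:trees} gives a regular dessin with this passport. I would add a remark on the implicit hypothesis. Throughout the proof of \thmref{thm:trees}, a passport is assumed to satisfy the parity condition \eqref{eq:lambdal}, which here means $n - (1+q)$ is odd. This is the same as $n \equiv q \pmod 2$, the standing assumption already used in \thmref{thm:MN}. Under this assumption the expression \eqref{eq:genus} yields an integer genus, and the existence part of the proof applies verbatim. A sanity check: with $u = a/q = n/q = b$, the construction produces explicit $x = \sigma_n^{l}$ and $y = \sigma_n^{mq}$ with $l + mq \equiv 1 \pmod n$. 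This is consistent, since one may take $l \equiv 1 - mq$ with $m$ chosen coprime to $b$ so that $\gcd(n, 1 - mq) = 1$.

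For the second assertion, I take $b = a$ and $q = p$ in \thmref{thm:trees}. A regular dessin exists if and only if $\gcd(p,p) = p = 1$. Finally, $p = 1$ is equivalent to $a = n/p = n$, which gives the stated reformulation.

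The only step needing any care is confirming that the specialized passports satisfy the hypotheses under which \thmref{thm:trees} was proved, namely $p, q \ge 1$ and the parity condition. This is immediate, so I do not expect a genuine obstacle.
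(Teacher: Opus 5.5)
Your proposal is correct and matches the paper, which states the corollary as an immediate consequence of \thmref{thm:trees}: setting $(a,p)=(n,1)$ gives $\gcd(1,q)=1$ for the first claim, and setting $(b,q)=(a,p)$ gives $\gcd(p,p)=p$ for the second. Your remark that the parity condition ($n\equiv q \pmod 2$, which for $[a^{p},a^{p},n]$ forces $n$ odd) must hold for these to be genuine passports is a sensible addition, since the paper leaves this hypothesis implicit.
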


%%%%%%%%%%%%%%%%%%%%%%%%%%%%%%%%%%%%
\subsection{General Criteria}

The following holds regardless of whether the passport corresponds to a tree or not.

\begin{prop}
\label{prop:absgroup}
A uniform passport $[a^{p}, b^{q}, c^{r}]$, where $n = pa = qb = rc$, admits a regular dessin if and only if
there exists an $($abstract$)$ group $G$ of order~$n$ satisfying the following condition:

\begin{itemize}
\item There exist elements $x, y \in G$ such that $\ord(x) = a$, $\ord(y) = b$, $\ord(xy) = c$, and $G = \langle x, y \rangle$.
\end{itemize}
\end{prop}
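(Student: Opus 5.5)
The plan is to prove both directions by passing between a regular dessin and the left regular representation of its monodromy group, using \lemref{lem:order-n} and \propref{prop:reguni} together with the realization result \cite[Theorem~3.6]{Scodro24}.

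\emph{($\Rightarrow$).} Suppose $\msD$ is a regular dessin with passport $[a^{p}, b^{q}, c^{r}]$ and monodromy group $G = \langle x, y\rangle \le \Sym(E)$.
\begin{enumerate}
\item By \lemref{lem:order-n}, $\lvert G\rvert = n$, so $G$ acts regularly on $E$.
\item In a regular action every nonidentity element is fixed-point-free. Hence each element $g$ acts as a product of cycles all of length $\ord(g)$: the power $g^{k}$ fixes some point only if $g^{k}=1$, so every $\langle g\rangle$-orbit has size exactly $\ord(g)$.
\item Since $x$ has cycle type $(a^{p})$, this gives $\ord(x) = a$. In the same way $\ord(y) = b$. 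Also $xy$ has the same cycle type as $z = (xy)^{-1}$, namely $(c^{r})$, so $\ord(xy) = c$.
\item Regarding $G$ as an abstract group of order $n$ with generators $x, y$ gives the required group.
\end{enumerate}

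\emph{($\Leftarrow$).} Let $G$ be an abstract group of order $n$ with $G = \langle x, y\rangle$, $\ord(x)=a$, $\ord(y)=b$ and $\ord(xy)=c$.
\begin{enumerate}
\item Let $G$ act on itself by left multiplication, taking $E = G$. This action is faithful, transitive and free.
\item By the same orbit argument as above, the image of $x$ in $\Sym(G)$ has cycle type $(a^{n/a}) = (a^{p})$. Likewise $y$ has type $(b^{q})$, and $xy$, hence $(xy)^{-1}$, has type $(c^{r})$.
\item Since $\langle x, y\rangle = G$ acts transitively, \cite[Theorem~3.6]{Scodro24} (more precisely, the standard correspondence between transitive pairs of permutations and dessins) gives a dessin whose monodromy permutations are exactly these images of $x$ and $y$.
\item That dessin has passport $[a^{p}, b^{q}, c^{r}]$. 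Its monodromy group is the image of $G$, of order $n$, so it is regular by \lemref{lem:order-n}.
\end{enumerate}

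The main point needing care is step (3) of the converse. The result of \cite{Scodro24} is phrased as the monodromy group being \emph{isomorphic} to $G$, but we need the specific permutation pair, because we need the passport. So I would invoke the explicit construction: edges are the elements of $G$, black vertices are the $\langle x\rangle$-orbits, white vertices are the $\langle y\rangle$-orbits, and the cyclic orders are given by $x$ and $y$. The fact that $x$ and $y$ really are its monodromy permutations then follows from \defref{def:monog}.

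Everything else reduces to the orbit-size observation for free actions, which is routine.
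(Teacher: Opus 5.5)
Your proof is correct and follows essentially the same route as the paper: you use the left regular action of $G$, realize the resulting transitive permutation pair as a dessin via \cite[Theorem~3.6]{Scodro24}, and get regularity from \lemref{lem:order-n}. The only difference is that you read off the cycle types of $x$, $y$, $xy$ directly from freeness of the action, whereas the paper obtains the passport by combining the orders of $x$, $y$, $xy$ with \propref{prop:reguni}; your remark that the realization must use the given permutation pair, not merely some group isomorphic to $G$, makes explicit a point the paper leaves implicit.
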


\begin{proof}
Assume that a uniform passport $[a^{p}, b^{q}, c^{r}]$ admits a regular dessin.
As in \defref{def:monog}, we define permutations $x$ and $y$ acting on the set of edges of the dessin,
and set $G = \langle x, y \rangle$, the monodromy group.
Then $\ord(x) = a$, $\ord(y) = b$, and $\ord(xy) = \ord((xy)^{-1}) = c$.
By \lemref{lem:order-n}, we have $\lvert G \rvert = n$.

Conversely, suppose that a group $G$ and elements $x, y \in G$ satisfy $\lvert G \rvert = n$, $\ord(x) = a$, $\ord(y) = b$, $\ord(xy) = c$, and $G = \langle x, y \rangle$.
Take an arbitrary bijection $\psi\colon E \to G$ on the set of edges $E = \{1, \dotsc, n\}$, and define an action of $G$ on $E$ by
\begin{align}
\text{for all } g \in G \text{ and all } e \in E, \quad g \cdot e \ceq \psi^{-1}(g\psi(e)).
\end{align}
For any $g, g' \in G$ and $e \in E$, we have

\begin{align}
\id\cdot\, e &= \psi^{-1}(\id\psi(e)) = \psi^{-1}(\psi(e)) = e, \\
g' \cdot (g \cdot e) &= g' \cdot \psi^{-1}(g\psi(e)) = \psi^{-1}(g'\psi(\psi^{-1}(g\psi(e)))) = \psi^{-1}(g'g\psi(e)) = (g'g)\cdot e.
\end{align}
Therefore, this defines a group action of $G$ on $E$.

Moreover, for any $e_{1}, e_{2} \in E$, consider the element $\psi(e_{2})(\psi(e_{1}))^{-1} \in G$. Then

\begin{align}
\psi(e_{2})(\psi(e_{1}))^{-1} \cdot e_{1} = \psi^{-1}(\psi(e_{2})(\psi(e_{1}))^{-1}\psi(e_{1})) = \psi^{-1}(\psi(e_{2})) = e_{2}.
\end{align}
Hence this action is transitive.

Therefore, there exists a dessin whose monodromy group is $G$\cite[Theorem~3.6]{Scodro24}.
Since $\lvert G \rvert = n$, it follows from \lemref{lem:order-n} that the dessin is regular.
Moreover, the orders of $x$, $y$, and $xy$ are $a$, $b$, and $c$, respectively, and since the dessin is uniform by
\propref{prop:reguni}, its passport is $[a^{p}, b^{q}, c^{r}]$ with $n = pa = qb = rc$.
\end{proof}

\begin{exa}
A regular \dde with the passport $[3^{4}, 3^{4}, 3^{4}]$ ($n = 12$, genus~1) must satisfy $\ord(x) = \ord(y) = \ord(xy) = 3$. 

Furthermore, since the action of $G = \langle x, y \rangle$ must be transitive, we have $x \ne y$.
Also, as $x, y \ne 1$, it follows that $xy \ne x$ and $xy \ne y$.
Hence $x$, $y$, and $xy$ must all be distinct, and therefore $G$ must be of order~12 and contain at least three elements of order~3.

There are five abstract groups of order~12: the cyclic group $C_{12}$, the direct product $C_{6} \times C_{2}$,
the alternating group $A_{4}$, the dihedral group $D_{6}$, and the dicyclic group $Q_{12}$. Among these,
 only $A_{4}$ contains three or more elements of order~3 (in fact, four).
By examining the elements of $A_{4}$, for example, we find that

\begin{align}
x &= (1\ 2\ 3)(4\ 5\ 6)(7\ 8\ 9)(10\ 11\ 12), \\
y &= (1\ 4\ 7)(2\ 11\ 6)(3\ 8\ 10)(5\ 12\ 9), \\
xy &= (1\ 5\ 10)(2\ 12\ 7)(3\ 9\ 6)(4\ 8\ 11)
\end{align}
satisfy the required conditions, showing that a regular dessin exists in this passport.
\end{exa}

\begin{exa}
\label{ex:c15}
A regular \dde with the passport $[3^{5}, 3^{5}, 3^{5}]$ ($n = 15$, genus~1) must satisfy $\ord(x) = \ord(y) = \ord(xy) = 3$.
Since the only group of order~15 is the cyclic group $C_{15}$, if $\sigma$ is a generator of $C_{15}$, then $x$, $y$,
and $xy$ must each be one of $\sigma^{5}$ or $\sigma^{10}$. However, $\{ \sigma^{5}, \sigma^{10} \}$ does not generate
$C_{15}$.

Therefore, no regular dessin exists for the passport $[3^{5}, 3^{5}, 3^{5}]$.

By the same reasoning, no regular dessin exists for the passport $[5^{3}, 5^{3}, 5^{3}]$ ($n = 15$, genus~4) either.
\end{exa}

%%%%%%%%%%%%%%%%%%%%%%%%%%%%%%%%%%%%%%%%%%%%%%%%
% Force newpage
\needspace{5\baselineskip}
\section{Automorphism Groups in Genus $\ge 2$}

\label{sec:autgge2}
In this section, we prove the existence of a dessin $\msD$ with $\AD \cong \{ 1 \}$
for passports of genus at least~$2$ in which at least two of the entries are of type $n^{1}$.

%%%%%%%%%%%%%%%%%%%%%%%%%%%%%%%%%%%%
\subsection{Passports $[n, n, n]$}

\begin{prop}
\label{prop:nnn}
Every passport $[n, n, n]$ of genus~$\ge 2$ admits a dessin with a trivial automorphism group.
\end{prop}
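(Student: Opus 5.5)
We take $x = \sigma_{n}$, look for an $n$-cycle $y$ such that $xy$ is also an $n$-cycle and no nontrivial permutation commutes with both $x$ and $y$, and show such a $y$ exists by counting. First, the genus of $[n, n, n]$ is $g = (n-1)/2$ by \eqref{eq:genus}. So $n$ is odd, and $g \ge 2$ means $n \ge 5$. Any $y$ of this kind has $G = \langle x, y \rangle$ transitive because $x$ is an $n$-cycle, so by \cite[Theorem~3.6]{Scodro24} it gives a dessin with passport $[n, n, n]$, and its automorphism group is the centralizer $C_{S_{n}}(G)$.

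\textbf{Reduction.} Every automorphism commutes with $x$, so $\AD \le C_{S_{n}}(\langle x \rangle) = \langle x \rangle$. If $\AD$ is nontrivial, it contains an element of prime order $p$, and for some prime $p \mid n$ this element generates the same subgroup as $x^{n/p}$. Hence $y$ commutes with $x^{n/p}$. The centralizer of $x^{n/p}$, which consists of $n/p$ disjoint $p$-cycles, is the wreath product $C_{p} \wr S_{n/p}$, of order $p^{n/p}(n/p)!$. The $y$ that give a nontrivial automorphism group therefore number at most
\begin{align}
B(n) \ceq \sum_{p \mid n,\ p \text{ prime}} p^{n/p}\left(\f{n}{p}\right)!.
\end{align}

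\textbf{Lower bound for good candidates.} We apply \thmref{thm:MN} with $b = n$, $q = 1$; the hypothesis $n \equiv q \pmod 2$ holds since $n$ is odd. Together with $T(n,1) = (n-1)!$ from \propref{prop:tbq}, this gives
\begin{align}
N(n, 1) \ge \f{2\,(n-1)!}{n+2}.
\end{align}
So it suffices to prove $N(n,1) > B(n)$, i.e.
\begin{align}
\sum_{p \mid n} p^{n/p}\left(\f{n}{p}\right)! < \f{2\,(n-1)!}{n+2} \q \text{for all odd } n \ge 5.
\end{align}

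\textbf{Main obstacle: this inequality uniformly in $n$.} First we handle small $n$ by hand. For $n = 5$ the left side is $5$ and the right side is $48/7$. For $n = 7$ it is $7$ against $160$, and for $n = 9$ it is $162$ against about $7331$. For general $n$, each summand satisfies $p^{n/p}(n/p)! \le n^{n/3}(n/3)!$, because $n/p \le n/3$ for odd $p$ and $p \le n$. The number of prime divisors of $n$ is at most $\log_{3} n$. So the left side is at most $\log_{3} n \cdot n^{n/3}\,(n/3)!$, and this is $o((n-1)!/n)$ by Stirling's formula. To finish, we will make the crossover explicit, for instance by showing that the ratio of the two sides decreases once $n \ge 11$, and check the finitely many remaining odd $n$ directly. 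If that estimate turns out awkward, the first fallback is the sharper per-prime bound: the $p = n$ term equals $n$, and for $p < n$ one has $n/p \le n/3$ and $p^{n/p} \le 3^{n/3}\cdot(\text{mild factor})$. Once the inequality holds, some $y$ yields a dessin with passport $[n, n, n]$ and trivial automorphism group.
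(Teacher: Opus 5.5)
Your proof is correct, but it takes a different route from the paper.

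The paper argues constructively. With $x=\sigma_n$ it exhibits $y=(2\ 4\ \ldots\ n{-}1\ n\ n{-}2\ \ldots\ 3\ 1)$ and checks that $xy$ is an $n$-cycle. It then computes $x^{2}(xyxy)^{-1}=(1\ 2\ 3)$ and concludes $G=A_n$: an odd $n$-cycle and a $3$-cycle generate $A_n$, and $G\le A_n$ because $n$ is odd. Hence $C_{S_n}(G)=\{1\}$.

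You argue by counting instead. Since $\AD\le C_{S_n}(x)=\langle x\rangle$, any bad $y$ commutes with $x^{n/p}$ for some prime $p\mid n$, so there are at most $\sum_{p\mid n}p^{n/p}(n/p)!$ of them. On the other side, \thmref{thm:MN} with $(b,q)=(n,1)$ gives $N(n,1)\ge 2(n-1)!/(n+2)$; its hypotheses hold because $n$ is odd.

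What each route buys:
\begin{itemize}
\item The paper's route yields an explicit dessin, identifies its monodromy group, and needs only a standard generation fact.
\item Yours is non-constructive and relies on the Goupil--Schaeffer formula (\thmref{thm:goupil}) behind \thmref{thm:MN}. It follows the same counting idea the paper uses for $[n,b^{q},n]$ with $q\ge 2$, but with a much simpler bad set, $\bigcup_{p}C_{S_n}(x^{n/p})$, in place of the imprimitive $y$ of \thmref{thm:imbq}. That suffices here because $T(n,1)=(n-1)!$ dwarfs the centralizer orders.
\end{itemize}

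The crossover you leave open is routine. Since $t\mapsto(\log t)/t$ is decreasing for $t\ge e$, you get $p^{n/p}\le 3^{n/3}$ for every odd prime $p$, with no extra factor. So the left side is at most $\log_{3}n\cdot 3^{n/3}\,\Gamma(n/3+1)$.
\begin{itemize}
\item At $n=7$ this is already below $2(n-1)!/(n+2)$: about $64$ against $160$.
\item Passing from $n$ to $n+2$ multiplies the quotient of the two sides by a factor $O(n^{-4/3})$. That factor is already about $0.11$ at $n=7$, so the inequality persists for all larger odd $n$.
\item Only $n=5$ needs the direct check you already did.
\item Your cruder bound with $n^{n/3}$ also works from $n=11$ on: about $2.1\times10^{5}$ against $5.6\times10^{5}$.
\end{itemize}
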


\begin{proof}
By \eqref{eq:genus}, the genus is $(n-3)/2 + 1 = (n-1)/2$. Since it is an integer at least~$2$,
$n$ is odd and $n \ge 5$.

Since the monodromy group does not depend on the labeling of the edges, we may fix
$x = \sigman$.

Let

\begin{align}
y = (2\ 4\ 6\ \ldots\ n{-}1\ n\ n{-}2\ \ldots\ 3\ 1),
\end{align}
where the even numbers appear in ascending order, followed by the odd numbers in descending order. Then

\begin{align}
xy = (1\ 3\ 2\ 5\ 4\ \ldots\ n{-}2\ n{-}3\ n\ n-1),
\end{align}
which is of type $(n)$.

For any $i, j \in E = \{ 1, \dotsc, n \}$, we have $x^{j-i}\cdot i = j$. Hence, $G = \langle x, y \rangle$ acts
transitively on $E$.

Therefore, $G$ is the monodromy group of a dessin $\msD$ with passport $[n, n, n]$.
Since $n$ is odd, we have $G \le A_n$, the alternating group.

Computing $x^{2}(xyxy)^{-1}$ yields $(1\ 2\ 3)$. As is well known, the permutations $x$ and $(1\ 2\ 3)$
generate $A_n$ for odd $n \ge 5$, and hence $G = A_n$. Therefore,

\begin{align}
\AD \cong C_{S_{n}}(A_{n}) \cong \{1\}.
\end{align}
\end{proof}

%%%%%%%%%%%%%%%%%%%%%%%%%%%%%%%%%%%%
\subsection{Approach for the Passports $[n, b^{q}, n]\ (q \ge 2)$}

In this section, we describe our approach to proving that every uniform passport
$[a^{p}, b^{q}, c^{r}]$ of genus at least~2, with exactly two of $p$, $q$, $r$ equal to 1,
admits a \dde with trivial automorphism group.
The proof combines counting arguments, analytic estimates, and a finite verification of
exceptional cases.

Since the automorphism group is invariant under interchanging black vertices, white vertices,
and faces, it suffices to consider the case $[n, b^{q}, n]$, where $n = bq$ and $q \ge 2$.
Since the genus $(n-q)/2$ is an integer at least~$2$, we have $n - q \ge 4$, $n \equiv q \pmod{2}$,
and hence $b = n/q \ge 2$.

In this case, unlike in \propref{prop:nnn}, it is difficult to construct a general pair $(x, y)$ such that $\Aut \msD \cong \{1\}$.
Therefore, we exploit the relationship between the primitivity (see \defref{def:block} and \ref{def:prim})
of the monodromy group and the triviality of the automorphism group.

\begin{prop}
\label{prop:primaut}
Let $G = \langle x, y \rangle$ be the monodromy group of a dessin $\msD$ with $n$ edges.
If $n$ is not prime and $G$ is primitive, the automorphism group $\Aut \msD$ is trivial.
\end{prop}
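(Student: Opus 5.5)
The plan is to prove the contrapositive: assume $\AD$ is nontrivial and produce a nontrivial block for $G$, contradicting primitivity. Take $1 \ne c \in \AD = C_{\Sym(E)}(G)$ and consider its orbits on $E$. Since $c$ commutes with every $g \in G$, we have $g\cdot(c\cdot e) = c\cdot(g\cdot e)$. Hence $G$ permutes the $\langle c \rangle$-orbits: $g \cdot (\langle c\rangle e) = \langle c \rangle (g\cdot e)$. Two such orbits are either equal or disjoint, so each orbit $S = \langle c \rangle e$ satisfies the block condition of \defref{def:block}. Equivalently, we could take the $\AD$-orbits of the whole group; the argument is the same.

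Next we check that such a block is nontrivial unless something degenerate happens. Since $\AD$ acts freely on $E$ (a listed property), $c \ne 1$ has no fixed points. So every $\langle c \rangle$-orbit has size $\ord(c) \ge 2$, and the blocks are not singletons. Moreover, all orbits have the same size $\ord(c)$, which divides $n$. If $\ord(c) < n$, the orbit $S$ is a proper subset of $E$ of size at least $2$. It is therefore a nontrivial block, so $G$ is imprimitive, which is a contradiction.

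The remaining case is when every nontrivial element of $\AD$ acts as a single $n$-cycle, i.e.\ $\ord(c) = n$; this is where the hypothesis that $n$ is not prime enters. Since $n$ is composite, choose a divisor $d$ of $n$ with $1 < d < n$ and replace $c$ by $c' = c^{n/d}$. This element still lies in $\AD$, is nontrivial, and has order $d$ strictly between $1$ and $n$. Its orbits, each of size $d$, then form a nontrivial block system by the previous paragraph. Again $G$ is imprimitive, a contradiction, so $\AD$ must be trivial.

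The only delicate point is this last step, namely making sure some nontrivial automorphism has order strictly less than $n$. Compositeness of $n$ guarantees it via powers of an element of order $n$. In general one could instead argue with the $\AD$-orbits directly, but when $\lvert \AD \rvert = n$ the $\AD$-orbit is all of $E$, so passing to a proper cyclic subgroup is necessary. The prime case genuinely fails, as shown by $C_p$ acting regularly: it is primitive, and its automorphism group has order $p$.
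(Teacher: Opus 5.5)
Your proof is correct, and its first half is the same as the paper's: both show that the $\langle c\rangle$-orbits are blocks for $G$, so primitivity forces them to be singletons or all of $E$. The routes differ only in the case where $c$ is an $n$-cycle.

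The paper argues via the centralizer. For an $n$-cycle $c$ one has $C_{S_n}(c)=\langle c\rangle$, so $G\le\langle c\rangle$. Transitivity then gives $G=\langle c\rangle\cong C_n$. A transitive cyclic group of composite degree is imprimitive (cf.\ the blocks $B_{m,j}$ in \lemref{lem:xres}), which gives the contradiction.

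You instead stay inside $\AD$. Since it is a group, $c^{n/d}$ is a nontrivial automorphism whose orbits have size $d$ with $1<d<n$, and the same block argument applies again. Note that freeness is not even needed here: an $n$-cycle's power $c^{n/d}$ visibly has cycle type $(d^{n/d})$.

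Your version is more uniform and self-contained. It avoids the centralizer-of-an-$n$-cycle fact and the separate imprimitivity statement for cyclic groups. It also shows that any automorphism of prime order $p$ yields a block system with blocks of size $p$, so a primitive $G$ with $\AD\neq 1$ forces $n=p$. Your use of semiregularity of $\AD$ is elementary: a centralizer element fixing one point fixes all points, by transitivity. The paper's version does not need semiregularity at all. It also extracts a bit more structure in the degenerate case, namely that $G$ itself would have to be cyclic of order $n$.
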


\begin{proof}
Assume that $n$ is not prime, $G$ is primitive, and let $c \in \Aut \msD$. Then, for every $g \in G$, we have $cg = gc$.

For $e \in \{1, \ldots, n\}$, let $O_e = \langle c \rangle \cdot e$ denote the orbit of $e$ under the action of
 $\langle c \rangle$. For any $g \in G$, we have

\begin{align}
g \cdot O_{e} = g \cdot (\langle c \rangle \cdot e) = (g \langle c \rangle) \cdot e = (\langle c \rangle g) \cdot e = \langle c \rangle \cdot (g \cdot e) = O_{g \cdot e}.
\end{align}

Assume that $(g \cdot O_{e}) \cap O_{e} \ne \emptyset$.
Since $g \cdot O_{e} = O_{g \cdot e}$ and each element of $O_{e}$ can be written as $c^{m} \cdot e$, while each element of $O_{g \cdot e}$ can be written as $c^{l} g \cdot e$, there exist integers $l, m$ such that $c^{l} g \cdot e = c^{m} \cdot e$.
Then $g \cdot e = c^{m-l} \cdot e \in O_{e}$, and hence $g \cdot O_{e} \subset O_{e}$.
Similarly, since $e = c^{l-m}g \cdot e \in O_{g \cdot e}$, we have $O_{e} \subset O_{g \cdot e}$.
Therefore, $g \cdot O_{e} = O_{g \cdot e} = O_{e}$.

Since this holds for all $g$ and all $e$, we have, for every $g$ and $e$, that either $g \cdot O_{e} = O_{e}$
or $(g \cdot O_{e}) \cap O_{e} = \emptyset$.
Hence, each element of the family $\{ O_{e} \}$ is a block for the action of $G$.

Furthermore, since $G$ acts transitively, all blocks are mapped to one another under
the action of $G$ and therefore have the same size $d$.
Since the size of the orbit of $c$ is $d$, the cycle type of $c$ is $(d^{n/d})$.

If $1 < d < n$, then $G$ has nontrivial blocks of size $d$, contradicting the assumption that $G$ is primitive.

If $d = n$, then $c$ has cycle type $(n)$.
It is well known that $C_{S_{n}}(c) = \langle c \rangle$.
Since $cg = gc$ for every $g \in G$, it follows that $G \le \langle c \rangle$,
and hence $G$ is cyclic.
A cyclic group acts primitively only when $n$ is prime; since $n$ is not prime,
this yields a contradiction.

Consequently, we have $d = 1$, and the cycle type of $c$ is $(1^{n})$; that is, $c = \id$. Therefore, $\Aut \msD \cong \{1\}$.
\end{proof}

For the passport $[n, b^{q}, n]$ with $b \ge 2$, fix $x = \sigman \in S_{n}$.
As described in the proof of \propref{prop:nnn}, the group $G = \langle x, y \rangle$ acts transitively on the edges
$\{ 1, \dotsc , n \}$ for any $y \in S_{n}$. Therefore, if $y$ has cycle type $(b^{q})$ and
$xy$ has cycle type $(n)$, then $G$ is the monodromy group of a dessin corresponding to this passport.

We define the following subsets of $S_{n}$:

\begin{itemize}
\item $T(b, q)$: the set of elements of $S_{n}$ with cycle type $(b^{q})$;
\item $N(b, q)$: the set of elements $y \in T(b, q)$ such that $xy$ has cycle type $(n)$;
\item $I(b, q)$: the set of elements $y \in T(b, q)$ such that $G = \langle x, y \rangle$ is imprimitive;
\item $C(b, q)$: the set of elements $y \in T(b, q)$ such that $C_{S_{n}}(G)$ is nontrivial (equivalently, $\Aut \msD \ncong \{1\}$).
\end{itemize}

$T$, $N$, and $I$ are the same as those defined in Section~\ref{sec:tbq}.

Since $n = bq$ is not prime, it follows from \propref{prop:primaut} that $I \supset C$.

The existence of a dessin $\msD$ such that $\Aut \msD \cong \{1\}$ is equivalent to $N \cap C^{c} \ne \emptyset$
 (the gray region in \figref{fig:tnic}).
To prove this, however, it is sufficient to show that $N \cap I^{c} \ne \emptyset$.
Furthermore, to establish this, it suffices to show that $\lvert N \rvert > \lvert I \rvert$.

\begin{figure}[htbp]
\centering
\includegraphics[width=66mm]{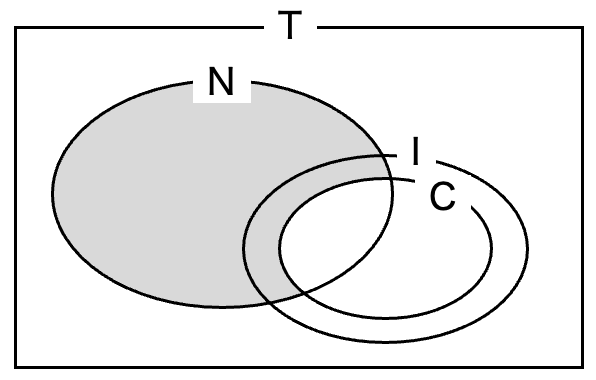} 
\caption{The region corresponding to trivial automorphism groups}
\label{fig:tnic}
\end{figure}

Hereafter, we use the same notation to denote the number of elements in a set (for example,
the number of elements of $T(b, q)$ is denoted by the function $T(b, q)$).

For a passport $[n, b^{q}, n] \ (b \ge 2)$ of genus at least~2, we show that

\begin{align}
\label{eq:itnt}
\f{I(b, q)}{T(b, q)} < \f{N(b, q)}{T(b, q)},
\end{align}
which implies that there always exists a dessin $\msD$ whose automorphism group $\Aut \msD$ is trivial.

By directly computing the automorphism groups of dessins belonging to the corresponding passports,
we find that most of them are in fact trivial, and that this tendency becomes more pronounced as $n$ increases.
Accordingly, we allow a slightly coarse upper bound for $I/T$,
and in the remaining cases where \eqref{eq:itnt} cannot be established for finitely many small pairs $(b, q)$,
we complete the proof by explicitly exhibiting pairs $(x, y)$ for which the automorphism group is trivial.

As in Section~\ref{sec:tbq}, let $I_{m}(b, q)$, where $m \mid n$ and $2 \le m < n$, denote the subset of
$I(b, q)$ consisting of elements $y$ such that $G = \langle x, y \rangle$ is imprimitive
with respect to the residue classes modulo~$m$.

\propref{prop:tbq} and \thmref{thm:imbq} give an explicit formula for $I_{m}/T$ and provide
an upper bound for $I/T$:

\begin{align}
\label{eq:imbqtbq}
\f{I_{m}(b, q)}{T(b, q)} &= \f{q!}{n!}m!\left(\left(\f{n}{m}\right)!\right)^{m}\sum_{\{(d_{i}, t_{i})\}}
\prod_{i}\f{1}{d_{i}^{t_{i}}t_{i}!}\left(\f{d_{i}^{\f{d_{i}q}{m}}}{\left(\f{d_{i}q}{m}\right)!}\right)^{t_{i}}, \\
\label{eq:ibqtbq}
\f{I(b, q)}{T(b, q)} &\le \f{q!}{n!}\sum_{\substack{2\le m<n\\ m\mid n}}m!\left(\left(\f{n}{m}\right)!\right)^{m}\sum_{\{(d_{i}, t_{i})\}}
\prod_{i}\f{1}{d_{i}^{t_{i}}t_{i}!}\left(\f{d_{i}^{\f{d_{i}q}{m}}}{\left(\f{d_{i}q}{m}\right)!}\right)^{t_{i}},
\end{align}
where the summations over $\{(d_{i}, t_{i})\}$ are taken over all partitions $\{(d_{i}, t_{i})\}$ of $m$ satisfying
\eqref{eq:diti}, that is:

\begin{align}
\thmITb
\end{align}

Since there exist pairs $(x, y)$ that admit blocks corresponding to multiple values of $m$, the total sum on the right-hand side of the latter generally exceeds $I/T$.

Since 

\begin{align}
\sum_{i} t_{i} \f{d_{i} q}{m} = \f{q}{m} \sum_{i} d_{i} t_{i} = q,
\end{align}
 it follows from the logarithmic convexity of the factorial
(equivalently, Gamma function) that the sum of the terms $\log (d_{i} q/m)$, each counted with multiplicity $t_{i}$, is not
smaller than the sum of $m$ terms of $\log (q/m)$, where $m$ is the total number of terms. Hence,

\begin{align}
&\log\prod_{i}\left(\left(\f{d_{i}q}{m}\right)!\right)^{t_{i}} = \sum_{i}t_{i}\log\left(\f{d_{i}q}{m}\right)! \ge m\log\left(\f{q}{m}\right)!
\end{align}
where, since $q/m$ is not necessarily an integer, we define $(q/m)! \ceq \Gamma(q/m + 1)$.

Therefore,

\begin{align}
&\prod_{i}\left(\f{1}{\left(\f{d_{i}q}{m}\right)!}\right)^{t_{i}} \le \f{1}{\left(\left(\f{q}{m}\right)!\right)^{m}}.
\end{align}
This yields

\begin{align}
\label{eq:imconv}
\f{I_{m}(b,q)}{T(b,q)} \le \f{q!}{n!}m!\left(\f{\left(\f{n}{m}\right)!}{\left(\f{q}{m}\right)!}\right)^{m}\sum_{\{(d_{i}, t_{i})\}}
\prod_{i}\f{d_{i}^{\left(\f{d_{i}q}{m}-1\right)t_{i}}}{t_{i}!}.
\end{align}
Applying Stirling's formula

\begin{align}
\label{eq:stirling}
\sqrt{2\pi M}\left(\f{M}{e}\right)^{M}e^{\f{1}{12M+1}} \le M! \le \sqrt{2\pi M}\left(\f{M}{e}\right)^{M}e^{\f{1}{12M}} \quad (M > 0),
\end{align}
we obtain

\begin{align}
\f{q!}{n!}m!\left(\f{\left(\f{n}{m}\right)!}{\left(\f{q}{m}\right)!}\right)^{m}
&\le m!\cdot \f{\sqrt{2\pi q}\left(\f{q}{e}\right)^{q}\exp\left(\f{1}{12q}\right)}{\sqrt{2\pi n}\left(\f{n}{e}\right)^{n}\exp\left(\f{1}{12n+1}\right)}\cdot
\f{\left(\sqrt{2\pi\f{n}{m}}\right)^{m}\left(\f{n}{me}\right)^{n}\exp\left(\f{m}{12\f{n}{m}}\right)}
{\left(\sqrt{2\pi\f{q}{m}}\right)^{m}\left(\f{q}{me}\right)^{q}\exp\left(\f{m}{12\f{q}{m}+1}\right)} \\
&= m!\,b^{\f{m-1}{2}}m^{-q(b-1)}\exp\left(\f{1}{12q}+\f{m^{2}}{12n}-\f{1}{12n+1}-\f{m^{2}}{12q+m}\right).
\end{align}

If we regard the exponent of $\exp$ as a function of $m$ ($2 \le m \le n$), denoted by $f(m)$, then

\begin{align}
f(m) &= \f{1}{12q}+\f{m^{2}}{12n}-\f{1}{12n+1}-\f{m^{2}}{12q+m}, \\
f'(m) &= m\left(\f{1}{6n}-\f{m+24q}{(m+12q)^{2}}\right).
\end{align}
Let $g(m) = -(m + 24q)/(m + 12q)^{2}$. Then $g'(m) = (m+36q)/(m+12q)^{3} > 0$,
thus $g(m)$ is monotonically increasing. Hence,

\begin{align}
f'(m) &\le f'(n) = n\left(\f{1}{6n}-\f{n+24q}{(n+12q)^{2}}\right) = bq\left(\f{1}{6bq}-\f{bq+24q}{(bq+12q)^{2}}\right) \\
&= b\left(\f{1}{6b}-\f{b+24}{(b+12)^{2}}\right) = - \f{5b^{2}+120b-144}{6(b+12)^{2}} < 0,
\end{align}
where the last inequality follows from $b \ge 2$.

Therefore, since $f(m)$ is monotonically decreasing, we have

\begin{align}
f(m) \le f(2) &= \f{1}{12q} + \f{1}{3n} - \f{1}{12n+1} - \f{2}{6q+1} < \f{1}{12q} + \f{1}{3n} - \f{2}{6q+1} \\
&= \f{1}{12q} + \f{1}{3bq} - \f{2}{6q+1} = \f{-18bq+b+24q+4}{12bq(6q+1)} = - \f{18\left(b-\f{4}{3}\right)\left(q-\f{1}{18}\right)-\f{16}{3}}{12bq(6q+1)} \\
&\le - \f{18\left(2-\f{4}{3}\right)\left(2-\f{1}{18}\right)-\f{16}{3}}{12bq(6q+1)} < 0,
\end{align}
where the last inequality follows from $b, q \ge 2$.

Hence, $\exp(f(m)) < 1$, and therefore

\begin{align}
\label{eq:factqnm}
&\f{q!}{n!}m!\left(\f{\left(\f{n}{m}\right)!}{\left(\f{q}{m}\right)!}\right)^{m} \le m!\,b^{\f{m-1}{2}}m^{-q(b-1)},
\end{align}
and then,

\begin{align}
\label{eq:imconv2}
&\f{I_{m}(b,q)}{T(b,q)} \le m!\,b^{\f{m-1}{2}}m^{-q(b-1)}S(b, q, m), \quad S(b, q, m) \ceq \sum_{\{(d_{i}, t_{i})\}}
\prod_{i}\f{d_{i}^{\left(\f{d_{i}q}{m}-1\right)t_{i}}}{t_{i}!},
\end{align}
where the summation in $S(b, q, m)$ is taken over all partitions $\{(d_{i}, t_{i})\}$ of $m$ satisfying \eqref{eq:diti}.

Applying \eqref{eq:stirling} to $m!$ as well, we obtain

\begin{align}
\f{I_{m}(b,q)}{T(b,q)} &\le \sqrt{2\pi m}\left(\f{m}{e}\right)^{m}e^{\f{1}{12m}}b^{\f{m-1}{2}}m^{-q(b-1)}S(b, q, m) \\
\label{eq:imconv3}
&= \sqrt{2\pi}\,b^{\f{m-1}{2}}m^{m-q(b-1)+\f{1}{2}}e^{-m+\f{1}{12m}}S(b, q, m).
\end{align}

In the following, we prove \eqref{eq:itnt} by dividing the argument into two cases: $b = 2$, where the dessins are
called \emph{clean} and which turns out to be the critical case, and $b \ge 3$.

%%%%%%%%%%%%%%%%%%%%%%%%%%%%%%%%%%%%
\subsection{The Case $b = 2$}

\begin{prop}
\label{prop:beq2}
Every passport $[n, 2^{q}, n]\ (n = 2q)$ of genus~$\ge 2$ admits a dessin with a trivial automorphism group.
\end{prop}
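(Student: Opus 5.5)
Throughout, $b=2$, so $n=2q$ and the genus is $(n-q)/2=q/2$. The hypothesis genus $\ge 2$ therefore means that $q$ is even and $q\ge 4$, so $n\ge 8$ and $n$ is not prime. By \propref{prop:primaut} and the discussion following it, it suffices to prove \eqref{eq:itnt}, that is $I(2,q)/T(2,q)<N(2,q)/T(2,q)$. The right-hand side is known exactly: by \thmref{thm:MN}, equality holds for $b=2$, so
\begin{align}
\f{N(2,q)}{T(2,q)}=\f{2}{n+2}=\f{1}{q+1}.
\end{align}
So the whole task is to show $I(2,q)/T(2,q)<1/(q+1)$ for all sufficiently large even $q$, and to treat the remaining small $q$ by explicit examples.

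For the upper bound I will use \eqref{eq:ibqtbq} together with \eqref{eq:imconv2}. When $b=2$ the admissible partitions in \eqref{eq:diti} are very restricted: $d_i\in\{1,2\}$, with $d_i=1$ allowed only if $m\mid q$, and $d_i=2$ requiring $m\mid 2q$. Hence $S(2,q,m)$ is a sum over $t_2$ with $t_1+2t_2=m$, with general term $2^{(2q/m-1)t_2}/t_2!$. This gives
\begin{align}
\f{I_m(2,q)}{T(2,q)}\le m!\,2^{\f{m-1}{2}}m^{-q}\,S(2,q,m).
\end{align}
I will split the divisors $m$ of $n$ into small and large ones.

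For small $m$ (say $m\le q/2$), the factor $m^{-q}$ dominates. For instance, $m=2$ contributes roughly $2\sqrt2\cdot 2^{-q}\cdot 2^{q}/1!$, which needs care. Here I would instead use the exact formula \eqref{eq:imbqtbq}, which for $m=2$ gives something like $q!\,2\,(q!)^2/(2q)!\cdot(\dots)$, of order $\binom{2q}{q}^{-1}$ times a polynomial factor, hence exponentially small. For large $m$ (with $q\le m<n$), the bound $m!\,m^{-q}$ is at most $m!/m^{m/2}$. I will handle these directly with the exact formula \eqref{eq:imbqtbq} as well, since $S$ is small there: the exponent $2q/m-1$ is at most $1$.

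Summing over the $O(\tau(n))$ divisors, I expect a total bound of the shape $C\cdot q^{c}\,2^{-q}$ or better. This beats $1/(q+1)$ once $q$ exceeds an explicit threshold $q_0$.

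The main obstacle is making the divisor-by-divisor estimates uniform. In particular, the terms with $m=2$ and with $m=q$ (and $m=n/2$ more generally) nearly compete with $1/(q+1)$ for moderate $q$, because $S(2,q,m)$ can carry a factor close to $2^{q}$ that cancels $m^{-q}$. My first attempt there is to bound these specific $m$ using the exact count \eqref{eq:imbqtbq} rather than the convexity bound \eqref{eq:imconv}, which is lossy exactly when the $d_iq/m$ are unequal.

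Finally, for the finitely many even $q$ with $4\le q<q_0$, I will exhibit explicit $y$ of type $(2^{q})$ with $xy$ an $n$-cycle and $\langle x,y\rangle$ primitive, for example by checking that $G\supseteq A_n$ or simply that no residue-class block system modulo $m$ is preserved (\lemref{lem:xres}). Alternatively, I will compute the centralizer directly to confirm $\AD\cong\{1\}$.
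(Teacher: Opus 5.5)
Your plan is essentially the paper's proof. You reduce via Proposition~\ref{prop:primaut} to showing $I/T<N/T=1/(q+1)$, bound $I/T\le\sum_m I_m/T$, and treat $m=2$ with the exact formula, since the convexity bound \eqref{eq:imconv2} gives only a constant there. You then control $3\le m\le q$ analytically and settle the even $q$ with $4\le q\le 20$ by computed examples; the paper does the middle step uniformly via $S(2,q,m)\le[x^m]\,e^{x+\alpha x^2}\le e^{R+\alpha R^2}/R^m$ and convexity in $m$, which works for $q\ge 22$.

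Three corrections to your heuristics:
\begin{itemize}
\item $I_2/T=\binom{2q}{q}^{-1}\bigl(\binom{q}{q/2}+2^{q}\bigr)\approx\sqrt{\pi q}\,2^{-q}$ is $\binom{2q}{q}^{-1}$ times an exponential factor, not a polynomial one. This is consistent with your later $Cq^{c}2^{-q}$.
\item $m=q$ is superexponentially small and not a real competitor.
\item The divisor $m=2q/3$ (when $3\mid q$) falls between your ``small'' and ``large'' ranges, so it needs to be covered as well.
\end{itemize}
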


\begin{proof}
By \eqref{eq:genus}, the genus is

\begin{align}
 \f{2q - (1 + q + 1)}{2} + 1 = \f{q}{2} \ge 2. 
\end{align}
Therefore, $q$ must be an even integer with $q \ge 4$.

By \thmref{thm:MN},

\begin{align}
\f{N}{T} = \f{2}{n+2} = \f{2}{2q+2} = \f{1}{q\left(1+\f{1}{q}\right)} \ge \f{1}{q\left(1+\f{1}{4}\right)}
= \f{4}{5q}.
\end{align}
Therefore, it suffices to show that

\begin{align}
\f{I(b, q)}{T(b, q)} < \f{4}{5q}.
\end{align}

Let $\tau(n)$ denote the number of positive divisors of $n$.
Then the number of values of $m$ appearing in \eqref{eq:ibqtbq} is $\tau(n) - 2$.
Since $\tau(n) \le 2\sqrt{n} = 2\sqrt{2q}$, it suffices to show that for each
$m$ with $m \mid n$ and $2 \le m < n$,

\begin{align}
\f{I_{m}}{T} < \f{4}{5q}\cdot\f{1}{2\sqrt{2q}} = \f{\sqrt{2}}{5q^{\f{3}{2}}}.
\end{align}
Since $n = 2q$, the largest divisor $m$ of $n$ with $m < n$ is $m = q$.
Therefore, it suffices to show that

\begin{align}
\label{eq:logneg}
\log \left(\f{5}{\sqrt{2}}q^{\f{3}{2}}\f{I_{m}}{T}\right) < 0
\end{align}
holds for all $2 \le m \le q$.

Since $d_{i} \mid 2$ in \eqref{eq:diti}, the only possible values of $d_{i}$ are $1$ and $2$.

We now divide the analysis into the cases $m = 2$ and $3 \le m \le q$, and obtain an upper bound for
the left-hand side of \eqref{eq:logneg} in each case.

%%%%%%%%%%%%%%%%%%%%%%%%%%%%%%%%%%%%%%%%
\HL

\noindent
(i) The case $m=2$.

Since $q$ is even, the partitions $\{(d_{i}, t_{i})\}$ of $2$ satisfying \eqref{eq:diti} are $\{(1,2)\}$ and  $\{(2,1)\}$.
Hence, by \eqref{eq:imbqtbq},

\begin{align}
\f{I_{2}}{T} &= \f{q!}{(2q)!}2!(q!)^{2}\left(\f{1}{2}\left(\f{1}{\left(\f{q}{2}\right)!}\right)^{2} + \f{1}{2}\cdot\f{2^{q}}{q!}\right)
= \f{(q!)^{2}}{(2q)!}\left(\f{q!}{\left(\left(\f{q}{2}\right)!\right)^{2}} + 2^{q}\right).
\end{align}
Using Stirling's formula \eqref{eq:stirling}, we obtain

\begin{align}
\f{I_{2}}{T} &\le \f{2\pi q \left(\f{q}{e}\right)^{2q}e^{\f{1}{6q}}}{\sqrt{2\pi\cdot 2q}\left(\f{2q}{e}\right)^{2q}e^{\f{1}{24q+1}}}
\left(\f{\sqrt{2\pi q}\left(\f{q}{e}\right)^{q}e^{\f{1}{12q}}}{2\pi\cdot \f{q}{2}\left(\f{q}{2e}\right)^{q}e^{\f{2}{6q+1}}} + 2^{q}\right) \\
&= \f{\sqrt{\pi q}}{2^{q}}\exp\left(\f{1}{6q} - \f{1}{24q+1}\right)\left(\sqrt{\f{2}{\pi q}}\exp\left(\f{1}{12q}-\f{2}{6q+1}\right) + 1\right).
\end{align}

Here, the first exponential term is decreasing in $q$, hence we bound it by its value at $q = 4$.
The argument of the second exponential term is always negative, and thus we bound it by $e^{0} = 1$.
Hence,

\begin{align}
\label{eq:bm2}
\f{I_{2}}{T} &\le  \f{e^{\f{73}{2328}}}{2^{q}}\left(\sqrt{2} + \sqrt{\pi q}\right).
\end{align}
Therefore,

\begin{align}
\label{eq:Gb2}
\log \left(\f{5}{\sqrt{2}}q^{\f{3}{2}}\f{I_{2}}{T}\right) &\le \f{73}{2328} - q\log 2 + \log(\sqrt{2}+\sqrt{\pi q}) + \log 5 - \f{1}{2}\log 2 + \f{3}{2}\log q.
\end{align}
Differentiating the right-hand side with respect to $q$ gives

\begin{align}
- \log 2 + \f{1}{2}\sqrt{\f{\pi}{q}}\f{1}{\sqrt{2}+\sqrt{\pi q}} + \f{3}{2q} \le - \log 2 + \f{1}{2}\sqrt{\f{\pi}{4}}\f{1}{\sqrt{2}+\sqrt{4\pi}} + \f{3}{2\cdot 4} < 0 \q (q \ge 4).
\end{align}

Therefore, the right-hand side of \eqref{eq:Gb2} is decreasing for all $q \ge 4$, and a direct evaluation shows that
it becomes negative for $q \ge 10$.

%%%%%%%%%%%%%%%%%%%%%%%%%%%%%%%%%%%%%%%%
\HL

\noindent
(ii) The case $3 \le m \le q$.

Substituting $b = 2$ into \eqref{eq:imconv3}, we obtain

\begin{align}
\label{eq:qnm}
\f{I_{m}}{T} &\le 2^{\f{m}{2}}\sqrt{\pi}m^{m-q+\f{1}{2}}e^{-m+\f{1}{12m}}S(2, q, m).
\end{align}

Regarding \eqref{eq:diti}, $d_i$ must be $1$ or $2$ since $d_i \mid 2$.
For $d_i = 1$, the condition $m \mid d_i q$ holds only if $m \mid q$.
For $d_i = 2$, the condition $m \mid d_i q$ always holds, since $m \mid n\ (= 2q)$.
Furthermore, if $m \nmid q$, then $m$ must be even.
In this case, the unique partition satisfying \eqref{eq:diti} is $\{(2, m/2)\}$.

Therefore, the partitions of $m$ satisfying \eqref{eq:diti} are

\begin{align}
&\left\{ \{(1, m-2j), (2, j)\} \relmiddle{|} 0 \le j \le \left\lfloor\f{m}{2} \right\rfloor \right\} \q (m \mid q), \\
&\left\{ \left\{ \left(2,  \f{m}{2}\right) \right\}\right\} \q (m \nmid q).
\end{align}
As the second set is contained in the first, in \eqref{eq:qnm} we obtain the inequality

\begin{align}
\label{eq:s2qm}
S(2, q, m) = \sum_{\{(d_{i}, t_{i})\}}\prod_{i}\f{d_{i}^{\left(\f{d_{i}q}{m}-1\right)t_{i}}}{t_{i}!}
\le  \sum_{j=0}^{\left\lfloor\f{m}{2}\right\rfloor}\f{2^{\left(\f{2q}{m}-1\right)j}}{(m-2j)!j!}.
\end{align}

Let $\A = 2^{2q/m-1}$. Then

\begin{align}
e^{x+\A x^{2}} = e^{x}e^{\A x^{2}} = \sum_{k=0}^{\infty}\f{x^{k}}{k!}\sum_{j=0}^{\infty}\f{\A^{j}x^{2j}}{j!}
= \sum_{k=0}^{\infty}\sum_{j=0}^{\infty}\f{\A^{j}x^{k+2j}}{k!j!}.
\end{align}
Extracting the coefficient of $x^{m}$ on the right-hand side,
only the terms satisfying $k + 2j = m$ (\ie $k = m - 2j$) contribute. Hence we obtain

\begin{align}
[x^{m}]\, e^{x + \A x^{2}} = \sum_{j=0}^{\left\lfloor\f{m}{2}\right\rfloor}\f{\A^{j}}{(m-2j)!j!},
\end{align}
where the left-hand side denotes  the coefficient of $x^{m}$ in $e^{x + \A x^{2}}$.

Since the right-hand side of \eqref{eq:s2qm} coincides with this expression,
we may bound $S(2, q, m)$ using the generating function $e^{x + \A x^{2}}$ as

\begin{align}
\label{eq}
S(2, q, m) \le [x^{m}]\, e^{x + \A x^{2}}.
\end{align}
Let $F(x) = e^{x+\A x^{2}}$. By Cauchy's integral formula, we have

\begin{align}
S(2, q, m) &\le [x^{m}]\,F(x) = \f{1}{2\pi i}\int_{\lvert z \rvert=R}\f{F(z)}{z^{m+1}}\,dz \q (R>0).
\end{align}
Hence, for any $R > 0$,

\begin{align}
S(2, q, m) = \lvert S(2, q, m) \rvert &\le \f{1}{2\pi}\cdot 2\pi R\cdot \f{\max_{\lvert z \rvert=R}\lvert F(z) \rvert}{R^{m+1}} = \f{\max_{\lvert z \rvert=R}\lvert F(z) \rvert}{R^{m}}.
\end{align}

For fixed $\lvert z \rvert = R$, the quantity $\lvert F(z) \rvert$ attains its maximum at $z = R$ (a positive real number). Thus,

\begin{align}
\label{eq:smxm}
S(2, q, m) \le \f{e^{R+\A R^{2}}}{R^{m}} \q \text{for all }R > 0.
\end{align}
Viewing the right-hand side as a function of $R$, let $L(R)$ denote its logarithm. Then

\begin{align}
L(R) &= R + \A R^{2} - m \log R, \\
L'(R) &= 1 + 2\A R - \f{m}{R} = \f{1}{R}(2\A R^{2} + R - m).
\end{align}
Since the right-hand side of \eqref{eq:smxm} (the upper bound for $S(2, q, m)$) is minimized when
$L'(R) = 0$, \ie at $R = (\sqrt{\,1 + 8\A m} - 1)/(4\A)$, we choose a nearby value

\begin{align}
R = R_{0} \ceq \f{\sqrt{8\A m}}{4\A} = \sqrt{\f{m}{2\A}} \ (> 0).
\end{align}
Then

\begin{align}
S(2, q, m) &\le \f{e^{R_{0}+\A R_{0}^{2}}}{R_{0}^{m}} = \exp\left(\f{m}{2} + \sqrt{\f{m}{2\A}}\right)\left(\f{2\A}{m}\right)^{\f{m}{2}} \\
\label{eq:smupper}
&= \exp\left(\f{m}{2} + \sqrt{\f{m}{2^{\f{2q}{m}}}}\right)\left(\f{2^{\f{2q}{m}}}{m}\right)^{\f{m}{2}}
= 2^{q}m^{-\f{m}{2}}\exp\left(\f{m}{2} + 2^{-\f{q}{m}}\sqrt{m}\right).
\end{align}

From \eqref{eq:imconv2}, \eqref{eq:qnm}, and \eqref{eq:smupper}, we have

\begin{align}
\f{5}{\sqrt{2}}q^{\f{3}{2}}\f{I_{m}}{T} \le{}&\f{5}{\sqrt{2}}q^{\f{3}{2}}2^{\f{m}{2}+q}\sqrt{\pi}m^{\f{m+1}{2}-q}\exp\left(-\f{m}{2} + \f{1}{12m} + 2^{-\f{q}{m}}\sqrt{m}\right), \\
\log \left(\f{5}{\sqrt{2}}q^{\f{3}{2}}\f{I_{m}}{T}\right) \le{} &\f{3}{2}\log q + \log 5 + \left(\f{m}{2}+q-\f{1}{2}\right)\log 2 + \f{1}{2}\log \pi \\
&+ \left(\f{m+1}{2}-q\right)\log m - \f{m}{2} + \f{1}{12m} + 2^{-\f{q}{m}}\sqrt{m}.
\end{align}

Let the right-hand side be denoted by $G(q,m)$. For $q \ge 4$ and $3 \le m \le q$, we have

\begin{align}
\label{eq:b2ddG}
\pdv[2]{G}{m} &= \f{1}{2m} + \f{2q-1}{2m^{2}} + \f{1}{6m^{3}}
+ \f{2^{-\f{q}{m}}}{m^{\f{7}{2}}}\left(-\f{m^{2}}{4}-qm\log 2+(q\log 2)^{2}\right).
\end{align}

Let

\begin{align}
g(q, m) = -\f{m^{2}}{4}-qm\log 2+(q\log 2)^{2}.
\end{align}
Since $q \ge m$, we have

\begin{align}
\pdv{g}{q} = 2(\log 2)^{2}q - m\log 2 \ge 2(\log 2)^{2}m - m\log 2 = m\log 2(2\log 2-1) > 0.
\end{align}
Hence $g$ is increasing in $q$ for each fixed $m$, and therefore

\begin{align}
g(q, m) &\ge g(m, m) = -\f{m^{2}}{4}-m^{2}\log 2+(m\log 2)^{2} = - Cm^{2}, \\
C &\ceq  \log 2 + \f{1}{4} - (\log 2)^{2} > 0.
\end{align}
Therefore, in \eqref{eq:b2ddG} we have

\begin{align}
\f{2^{-\f{q}{m}}}{m^{\f{7}{2}}}\left(-\f{m^{2}}{4}-qm\log 2+(q\log 2)^{2}\right)
\ge - \f{2^{-\f{q}{m}}}{m^{\f{7}{2}}}Cm^{2}
\ge - \f{2^{-\f{m}{m}}}{m^{\f{7}{2}}}Cm^{2} = - \f{C}{2m^{\f{3}{2}}},
\end{align}
and it follows that

\begin{align}
\pdv[2]{G}{m} &\ge \f{1}{2m} + \f{2q-1}{2m^{2}} + \f{1}{6m^{3}} - \f{C}{2m^{\f{3}{2}}}
\ge \f{1}{2m} + \f{2m-1}{2m^{2}} + \f{1}{6m^{3}} - \f{C}{2m^{\f{3}{2}}} \\
&= \f{3}{2m} - \f{1}{2m^{2}} + \f{1}{6m^{3}} - \f{C}{2m^{\f{3}{2}}} >  \f{3}{2m} - \f{1}{2m^{2}} - \f{C}{2m^{\f{3}{2}}}
= \f{1}{2m^{\f{3}{2}}}\left(3\sqrt{m} - \f{1}{\sqrt{m}} - C\right) \\
&\ge \f{1}{2m^{\f{3}{2}}}\left(3\sqrt{3} - \f{1}{\sqrt{3}} - \log 2 - \f{1}{4} + (\log 2)^{2}\right) > 0.
\end{align}

Therefore, $G$ is convex in $m$. Hence its maximum on the interval $3 \le m \le q$ is attained at one of
the endpoints, $m = 3$ or $m = q$.

Let $H_{1}(q) \ceq G(q, 3)$ and $H_{2}(q) \ceq G(q, q)$. Then

\begin{align}
G(q, m) &\le \max\{H_{1}(q), H_{2}(q)\} \q (3 \le m \le q),
\end{align}
where

\begin{align}
H_{1}(q) &= \f{3}{2}\log q + \log 5 + (q+1)\log 2 + \f{1}{2}\log \pi + (2-q)\log 3 - \f{53}{36} + 2^{-\f{q}{3}}\sqrt{3}, \\
H_{1}'(q) &= \f{3}{2q} + \log 2 - \log 3- \f{2^{-\f{q}{3}}}{\sqrt{3}}\log 2 < \f{3}{2\cdot 4} + \log 2 - \log 3 < 0 \q (q \ge 4),
\end{align}
and

\begin{align}
H_{2}(q) &= \log 5 + \left(\f{3}{2}q-\f{1}{2}\right)\log 2 + \f{1}{2}\log \pi + \left(-\f{q}{2}+2\right)\log q - \f{q}{2} + \f{1}{12q} + \f{1}{2}\sqrt{q}, \\
H_{2}'(q) &= -\f{1}{2}\log q - 1 + \f{3}{2}\log 2 + \f{2}{q} - \f{1}{12q^{2}} + \f{1}{4\sqrt{q}} \\
&< -\f{1}{2}\log 4 - 1 + \f{3}{2}\log 2 + \f{2}{4} - 0 + \f{1}{4\sqrt{4}} < 0 \q (q \ge 4).
\end{align}

Thus both $H_{1}$ and $H_{2}$ are strictly decreasing for $q \ge 4$.
Direct computation shows that $H_{1}(q) < 0$ for $q \ge 22$, and $H_{2}(q) < 0$ for $q \ge 14$.
Consequently, $G(q, m)$ is negative for all $q \ge 22$ and all $m$ with $3 \le m \le q$.

Combining this with the case $m = 2$, we see that \eqref{eq:logneg} holds for all $q \ge 22$ and all $m$
with $m \mid n$, $2 \le m < n$.

For even integers $4 \le q \le 20$, \tabref{tab:exceptionbeq2} lists 
those permutations $y$ such that, with $x = \sigman$ fixed,
the automorphism group is trivial.

When $q = 4$, the monodromy group $G = \langle x, y \rangle$ has order~336 and
$C_{S_n}(G) \cong \{ 1 \}$.
On the other hand, when $6 \le q \le 20$, we have $\langle x, y \rangle = S_n$.
Since $C_{S_{n}}(S_{n}) = \{ 1 \}$,
it follows that for each such $q$ there exists a dessin $\msD$ with $\Aut \msD \cong \{ 1 \}$.

Consequently, every passport $[n, 2^{q}, n] \ (n = 2q)$ of genus~$\ge 2$
admits a dessin with a trivial automorphism group.
\end{proof}

%%%%%%%%%%%%
%%% table-beq2.tex: begin
\begin{table}[htbp]
  \centering
  \scriptsize
  \begin{tabular}{|c|c|c|c|c|p{11.1cm}|}
    \hline
    \multirow{2}{*}{$b$} &
    \multirow{2}{*}{$q$} &
    \multicolumn{4}{|l|}{$y$} \\ \cline{3-6}
    & & $G$ & $w(x,y)$ & $p_{w}$ & $w$ \\ \hline
    \multirow{2}{*}{$2$} &
    \multirow{2}{*}{$4$} &
    \multicolumn{4}{|p{7.5cm}|}{$(1\ 4)(2\ 5)(3\ 7)(6\ 8)$} \\ \cline{3-6}
     & & $336$ & & & \\ \hline
    \multirow{2}{*}{$2$} &
    \multirow{2}{*}{$6$} &
    \multicolumn{4}{|p{7.5cm}|}{$(1\ 4)(2\ 9)(3\ 6)(5\ 8)(7\ 11)(10\ 12)$} \\ \cline{3-6}
     & & $S_{12}$ & $xyxyx^{4}yx^{3}yx$ & $5$ & $(4\ 6\ 9\ 5\ 11)$ \\ \hline
    \multirow{2}{*}{$2$} &
    \multirow{2}{*}{$8$} &
    \multicolumn{4}{|p{7.5cm}|}{$(1\ 13)(2\ 10)(3\ 16)(4\ 6)(5\ 9)(7\ 14)(8\ 11)(12\ 15)$} \\ \cline{3-6}
     & & $S_{16}$ & $x^{3}yxyxyx^{3}$ & $13$ & $(1\ 15\ 8\ 7\ 16\ 10\ 11\ 2\ 3\ 5\ 6\ 12\ 13)$ \\ \hline
    \multirow{2}{*}{$2$} &
    \multirow{2}{*}{$10$} &
    \multicolumn{4}{|p{7.5cm}|}{$(1\ 15)(2\ 19)(3\ 12)(4\ 18)(5\ 8)(6\ 16)(7\ 10)(9\ 11)(13\ 20)(14\ 17)$} \\ \cline{3-6}
     & & $S_{20}$ & $yx^{7}yxyx^{2}$ & $17$ & $(1\ 10\ 8\ 3\ 4\ 15\ 5\ 6\ 12\ 9\ 17\ 2\ 11\ 19\ 20\ 13\ 16)$ \\ \hline
    \multirow{2}{*}{$2$} &
    \multirow{2}{*}{$12$} &
    \multicolumn{4}{|p{7.5cm}|}{$(1\ 16)(2\ 11)(3\ 9)(4\ 14)(5\ 20)(6\ 21)(7\ 15)(8\ 17)(10\ 24)(12\ 22)(13\ 18)(19\ 23)$} \\ \cline{3-6}
     & & $S_{24}$ & $yx^{2}yx^{4}yx^{2}yx^{4}$ & $19$ & $(1\ 9\ 20\ 18\ 23\ 19\ 11\ 8\ 21\ 24\ 12\ 5\ 22\ 4\ 2\ 6\ 3\ 10\ 13)$ \\ \hline
    \multirow{2}{*}{$2$} &
    \multirow{2}{*}{$14$} &
    \multicolumn{4}{|p{7.5cm}|}{$(1\ 27)(2\ 18)(3\ 20)(4\ 8)(5\ 22)(6\ 13)(7\ 26)(9\ 12)(10\ 15)(11\ 28)(14\ 17)(16\ 23)(19\ 25)(21\ 24)$} \\ \cline{3-6}
     & & $S_{28}$ & $x^{3}yx^{11}y$ & $23$ & $(1\ 18\ 9\ 19\ 7\ 15\ 27\ 12\ 6\ 24\ 11\ 3\ 23\ 4\ 28\ 8\ 13\ 17\ 22\ 26\ 5\ 25\ 21)$ \\ \hline
    \multirow{2}{*}{$2$} &
    \multirow{2}{*}{$16$} &
    \multicolumn{4}{|p{7.5cm}|}{$(1\ 26)(2\ 27)(3\ 29)(4\ 15)(5\ 20)(6\ 12)(7\ 14)(8\ 24)(9\ 19)(10\ 30)(11\ 17)(13\ 16)(18\ 23)(21\ 25)(22\ 32)(28\ 31)$} \\ \cline{3-6}
     & & $S_{32}$ & $x^{3}yx^{3}$ & $29$ & $(1\ 18\ 28\ 31\ 30\ 29\ 25\ 2\ 23\ 4\ 17\ 8\ 20\ 21\ 11\ 10\ 19\ 3\ 15\ 26\ 6\ 22\ 24\ 5\ 27\ 13\ 16\ 12\ 7)$ \\ \hline
    \multirow{2}{*}{$2$} &
    \multirow{2}{*}{$18$} &
    \multicolumn{4}{|p{7.5cm}|}{$(1\ 28)(2\ 9)(3\ 29)(4\ 7)(5\ 36)(6\ 24)(8\ 16)(10\ 12)(11\ 33)(13\ 21)(14\ 17)(15\ 35)(18\ 31)(19\ 25)(20\ 26)(22\ 32)(23\ 34)\allowbreak(27\ 30)$} \\ \cline{3-6}
     & & $S_{36}$ & $xyx^{2}yx^{2}yx^{2}yx$ & $29$ & $(1\ 15\ 26\ 17\ 35\ 21\ 14\ 4\ 16\ 18\ 7\ 22\ 25\ 20\ 28\ 34\ 13\ 23\ 12\ 5\ 24\ 29\ 10\ 9\ 31\ 32\ 6\ 33\ 36)$ \\ \hline
    \multirow{2}{*}{$2$} &
    \multirow{2}{*}{$20$} &
    \multicolumn{4}{|p{7.5cm}|}{$(1\ 39)(2\ 40)(3\ 10)(4\ 32)(5\ 35)(6\ 17)(7\ 14)(8\ 22)(9\ 19)(11\ 31)(12\ 38)(13\ 36)(15\ 26)(16\ 18)(20\ 28)(21\ 30)(23\ 33)\allowbreak(24\ 29)(25\ 27)(34\ 37)$} \\ \cline{3-6}
     & & $S_{40}$ & $x^{4}yxyxyxyx^{4}yx$ & $37$ & $(1\ 25\ 27\ 23\ 12\ 8\ 40\ 39\ 18\ 21\ 3\ 37\ 36\ 9\ 19\ 11\ 14\ 32\ 13\ 17\ 24\ 4\ 6\ 30\ 22\ 20\ 38\ 26\ 34\ 31\allowbreak16\ 33\ 35\ 29\ 15\ 28\ 2)$ \\ \hline
  \end{tabular} \\
  \vspace{1\baselineskip}
  \caption{Elements $y$ such that $\AD \cong \{ 1 \}$ for $b = 2$ ($x = (1\ 2\ \ldots\ n)$)}\label{tab:exceptionbeq2}
\end{table}
%%% table-beq2.tex: end
%%%%%%%%%%%%

To find elements $y$ such that $\langle x, y \rangle = S_{n}$ for $x = \sigman$, we make use of the following theorem.

\begin{thm}
\label{thm:pn-3}
Let $G \le S_{n}$ be a primitive group which contains a cycle $(p)$ of prime length $p$.
Then either $G \ge A_{n}$ or $n \le p+2$.
\end{thm}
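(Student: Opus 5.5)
This is Jordan's classical theorem on primitive groups containing a cycle of prime length; in its sharp form it appears as Jones, \emph{Primitive permutation groups containing a cycle} (Bull.\ Aust.\ Math.\ Soc.\ 2014). The most economical course is simply to cite it. If a self-contained argument is wanted, I would follow the standard route through \emph{Jordan sets}. Let $c \in G$ be the $p$-cycle, let $\Delta$ be its support, and let $\Gamma = E \setminus \Delta$. Assume $n \ge p+3$, so that $|\Gamma| \ge 3$. The pointwise stabilizer $G_{(\Gamma)}$ contains $\langle c \rangle$, which is transitive on $\Delta$. Hence $\Delta$ is a Jordan set of $G$, and since $p$ is prime, $\langle c\rangle$ is even primitive on $\Delta$.

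The key steps, in order, are as follows. First I would prove the expansion lemma. Because $G$ is primitive and $\Delta \ne E$, some conjugate $\Delta^{g}$ meets $\Delta$ without being equal to it; this uses that $\Delta$ cannot be a block, by primitivity. Then $H = \langle c, c^{g}\rangle$ is transitive on $\Delta \cup \Delta^{g}$ and fixes its complement pointwise. So $\Delta \cup \Delta^{g}$ is again a Jordan set, strictly larger than $\Delta$. Moreover $H$ is primitive on this union, since a transitive group of degree $< 2p$ containing a $p$-cycle is primitive: a block system would have blocks of size coprime to $p$, and the $p$-cycle would then force each block to be a singleton. Iterating, I obtain primitive Jordan sets of every size up to $n-1$. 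Second, I would apply Jordan's lemma: if a primitive $G$ has a primitive Jordan set with complement of size $k$, then $G$ is $(k+1)$-transitive. The proof is by induction on $k$, passing to point stabilizers $G_{\alpha}$ for $\alpha \in \Gamma$, which stay primitive on $E\setminus\{\alpha\}$ with $\Delta$ still a Jordan set. Third, I would combine this with the hypothesis $|\Gamma| \ge 3$. Choose the Jordan set to have complement of size exactly $3$ (possible by the expansion step) and work inside the group acting on a set of size $p+3$ or smaller. There I would produce a $3$-cycle as a commutator $[c, c^{g}]$ with $|\Delta \cap \Delta^{g}| = 1$; then $c$ and $c^{g}$ overlap in a single point, and their commutator is a $3$-cycle.

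Finally, a primitive group containing a $3$-cycle contains $A_{n}$ (Jordan), which finishes the proof.

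The main obstacle is the third step: arranging a conjugate $c^{g}$ whose support meets $\Delta$ in exactly one point. This is exactly where $n \ge p+3$ is needed, since the counterexamples with $n = p+1, p+2$ (for instance $\mathrm{PGL}_{2}(p)$ and $\mathrm{PSL}_{2}(p)$ on $p+1$ points) show it can fail otherwise. I would first try to use the high transitivity from Jordan's lemma, since $G$ is at least $4$-transitive, to move $\Delta$ by an element that fixes one point of $\Delta$ and sends the remaining $p-1$ points of $\Delta$ off $\Delta$ where possible. If $p$ is large relative to $n-p$ this is impossible directly. In that case I would instead take a commutator with small support, giving an element of prime order acting as a cycle on fewer points, and induct on $p$ down to $p = 3$. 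Because of this delicacy, in the paper I would state the result with the citation to Jones and use it only as a black box for verifying $\langle x, y\rangle = S_{n}$ in the tables.
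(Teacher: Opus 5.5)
Citing the theorem is exactly what the paper does: it refers to \cite[Theorem~3.3E]{Dixon96}, which is Jordan's classical result with a classification-free proof. Jones's 2014 paper is a more general result that contains it as a special case. So as a black box your proposal is fine.

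The self-contained route, however, fails at your third step. For any $g$ one has $\lvert\Delta\cap\Delta^{g}\rvert\ge 2p-n$. Hence a conjugate support meeting $\Delta$ in exactly one point can exist only if $n\ge 2p-1$. In the degree-$(p+3)$ situation you reduce to, this forces $p\le 3$, which is exactly where the theorem is immediate. Concretely, in degree $p+3$ at most three points of $\Delta$ can be moved off $\Delta$, so $\lvert\Delta\cap\Delta^{g}\rvert\ge p-3$ however transitive $G$ is.

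Your fallback is to take a commutator $[c,c^{g}]$ of small support, hope it is a cycle of smaller prime length, and induct on $p$. Nothing supports this. When the supports overlap in many points, the cycle type of $[c,c^{g}]$ is uncontrolled and in general is not a cycle at all. Also, high transitivity prescribes $g$ on only a few points, while $\Delta^{g}$ depends on $g$ on all of $\Delta$. Separately, the claim that iterating the expansion yields primitive Jordan sets of every size is unjustified, since $\lvert\Delta^{g}\setminus\Delta\rvert$ is not controlled. It is also unnecessary: Jordan's lemma applies directly to $\Delta$ with $k=n-p$.

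The missing idea is the Sylow--Frattini argument, which is the standard proof.
\begin{itemize}
\item Put $k=n-p\ge 3$ and $P=\langle c\rangle$. By your second step $G$ is $k$-transitive.
\item $G_{(\Gamma)}$ acts faithfully on the $p$ points of $\Delta$, so its order divides $p!$. Hence $P$ is a Sylow $p$-subgroup of $G_{(\Gamma)}$.
\item Given $\pi\in\mathrm{Sym}(\Gamma)$, pick $g\in G$ inducing $\pi$ on $\Gamma$. Then $gPg^{-1}\le G_{(\Gamma)}$, so $hgPg^{-1}h^{-1}=P$ for some $h\in G_{(\Gamma)}$. The element $hg\in N_{G}(P)$ still induces $\pi$ on $\Gamma$.
\item Thus $N_{G}(P)$ induces all of $\mathrm{Sym}(\Gamma)$. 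On $\Delta$ it induces a subgroup of $N_{\mathrm{Sym}(\Delta)}(P)\cong\mathrm{AGL}(1,p)$, whose derived subgroup lies in $P$.
\item Therefore $N_{G}(P)'\le P\times\mathrm{Alt}(\Gamma)$ and maps onto $\mathrm{Alt}(\Gamma)$. Since $k\ge 3$, it contains an element $ut$ with $u\in P$ and $t$ a $3$-cycle on $\Gamma$.
\item Then $(ut)^{p}=t^{p}$, which is a $3$-cycle when $p\ne 3$; when $p=3$, $c$ itself is a $3$-cycle.
\end{itemize}
Jordan's $3$-cycle theorem then gives $G\ge A_{n}$. This is where $n\ge p+3$ genuinely enters: it is what makes $\mathrm{Alt}(\Gamma)$ contain a $3$-cycle.
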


\begin{proof}
See \cite[Theorem~3.3E]{Dixon96}.
\end{proof}

By this theorem, for a permutation $y$ of cycle type $(2^{q})$, if $G = \langle x, y \rangle$ has no blocks with respect to any residue class of a divisor $m$ of $n = 2q$ ($2 \le m < n$), and if $G$ contains an element $w$ of cycle type $(p_{w})$ for a prime $p_{w} \le n - 3$, then $G = S_{n}$ (since $n$ is even, it cannot be $A_{n}$).

A list of such examples of $y$ and $w$ obtained by computation is shown in \tabref{tab:exceptionbeq2}.

In practice, we generated random permutations $y$ of cycle type $(2^{q})$, produced several tens of thousands of elements of $\langle x, y \rangle$, and stopped as soon as an element $w$ satisfying the required conditions was found; otherwise, we discarded $y$ and tried again.
By repeating this procedure, we completed the table.

When $(b, q) = (2, 4)$, the monodromy group never becomes $S_{8}$.
Instead, it can be a group of order~$336$, in which case the automorphism group is trivial.
A dessin realizing this case is shown in \figref{fig:g2-336}.

\begin{figure}[htbp]
\centering
\includegraphics[width=80mm]{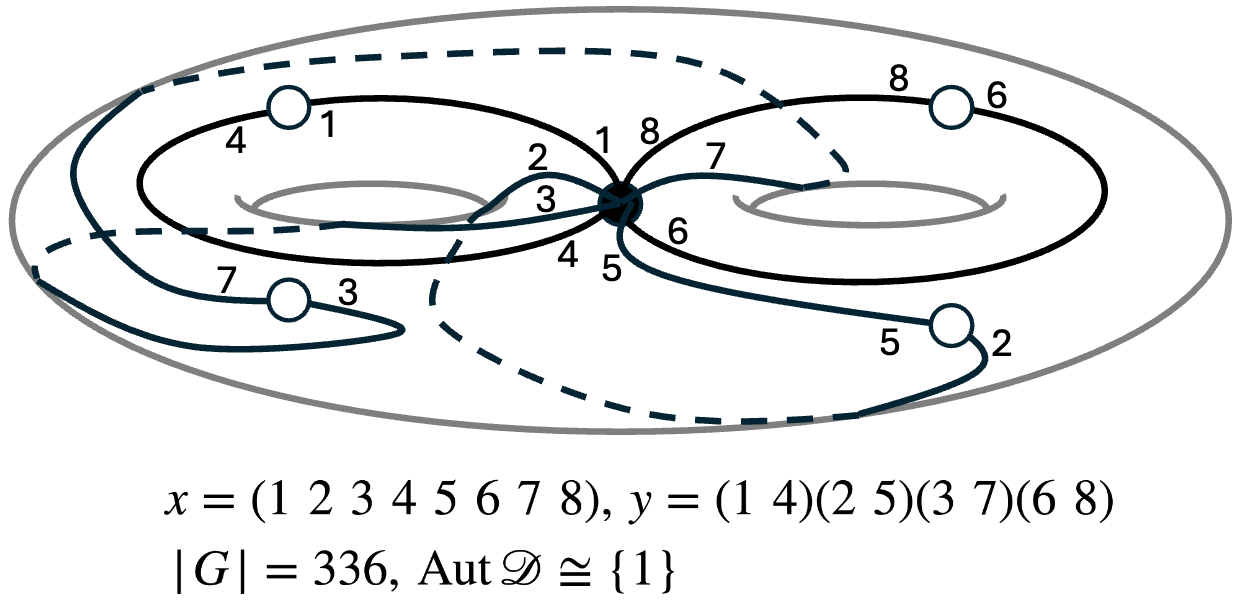}
\caption{A dessin with trivial automorphism group having passport $[8,2^{4},8]$}
\label{fig:g2-336}
\end{figure}

%%%%%%%%%%%%%%%%%%%%%%%%%%%%%%%%%%%%
\subsection{The Case $b \ge 3$}

\begin{prop}
\label{prop:bge3}
Every passport $[n, b^{q}, n]$ of genus~$\ge 2$ with $b \ge 3$ and $q \ge 2$ admits a dessin with a trivial automorphism group.
\end{prop}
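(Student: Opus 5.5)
We follow the same strategy as in \propref{prop:beq2}. We show that $I(b,q)/T(b,q) < N(b,q)/T(b,q)$ for all but finitely many pairs $(b,q)$, and we treat the remaining pairs by explicit computation. By \propref{prop:primaut} (note that $n=bq$ is composite), a $y \in N\setminus I$ yields a dessin with trivial automorphism group.

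The lower bound comes from \thmref{thm:MN}: $N/T \ge 2/(n+2)$. The genus condition forces $n-q\ge 4$ and $n\equiv q \pmod 2$, so \thmref{thm:MN} applies. Since the number of admissible $m$ is at most $\tau(n)-2 < 2\sqrt{n}$, it suffices to show, for every divisor $m$ of $n$ with $2\le m\le n/2$, that
\begin{align}
\f{I_{m}(b,q)}{T(b,q)} < \f{2}{(n+2)\cdot 2\sqrt{n}} = \f{1}{(n+2)\sqrt{n}}.
\end{align}

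For the upper bound we start from \eqref{eq:imconv2}, that is, $I_m/T \le m!\,b^{(m-1)/2}m^{-q(b-1)}S(b,q,m)$. We bound $S(b,q,m)$ by the same generating-function trick used in the case $b=2$. Each admissible $d_i$ divides $b$, so
\begin{align}
S(b,q,m) \le [x^{m}]\exp\Bigl(\sum_{d\mid b} d^{\,dq/m-1}x^{d}\Bigr) \le \f{1}{R^{m}}\exp\Bigl(\sum_{d\mid b} d^{\,dq/m-1}R^{d}\Bigr)
\end{align}
for every $R>0$, by the Cauchy estimate. The dominant term should be $d=b$, contributing roughly $b^{bq/m}R^{b}$. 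Choosing $R\approx (m/b)^{1/b}b^{-q/m}$ balances it against $R^{-m}$, which gives $S \lesssim b^{q}\,e^{O(m)}\,(b/m)^{m/b}$, with the smaller divisors absorbed by a crude factor $\tau(b)$. Combining this with $m!\approx (m/e)^m$, the logarithm of $I_m/T$ becomes roughly
\begin{align}
q\log b - q(b-1)\log m + m\log m + O(m+\log b).
\end{align}
For $b\ge 3$ and $m\ge 2$ the term $-q(b-1)\log m$ dominates $q\log b$ by a wide margin, because $(b-1)\log 2 > \log b$ comfortably. This is why $b\ge 3$ is easier than $b=2$. The competing term $m\log m$ is at most $(n/2)\log n$, which is also below $q(b-1)\log m$ once we split carefully.

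I would therefore define an explicit function $G(b,q,m)$ bounding $\log\bigl((n+2)\sqrt{n}\,I_m/T\bigr)$ and show that it is convex in $m$, as in the case $b=2$. The maximum is then attained at the endpoints $m=2$ and $m=n/2$. The case $m=2$ may again be handled exactly from \eqref{eq:imbqtbq}, where only $d\in\{1,2\}$ occurs. Finally, I would check that both endpoint functions are decreasing in $b$ and in $q$ and become negative beyond explicit thresholds.

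The main obstacle is the endpoint $m$ close to $n$ with $q$ small. When $q=2$ or $3$, the large-$m$ end loses its advantage: $dq/m$ is small and $m!$ is huge, so the convexity and monotonicity estimates must be made uniform in two variables $(b,q)$. I expect to split into $q$ small and $b$ large versus $q$ large, and to use the cruder bound $m!\,b^{(m-1)/2}m^{-q(b-1)}$ directly when $m\ge q$. For the finitely many remaining pairs $(b,q)$, I would proceed as in \tabref{tab:exceptionbeq2}: search randomly for $y$ of type $(b^q)$ with $xy$ an $n$-cycle, verify that no residue-class block system exists via \lemref{lem:xres}, and exhibit a prime cycle $w\in\langle x,y\rangle$ of length $p_w\le n-3$. \thmref{thm:pn-3} then gives $G\ge A_n$, so $\Aut\msD$ is trivial.
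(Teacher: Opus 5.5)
Your outline has the same architecture as the paper's proof: $N/T\ge 2/(n+2)$ from \thmref{thm:MN}, a union bound over fewer than $2\sqrt{n}$ moduli $m$, the estimate \eqref{eq:imconv2}, a Cauchy bound on the generating function of $S(b,q,m)$, convexity in $m$ with endpoint and monotonicity checks in $(b,q)$, and a computer search for the leftover pairs.

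The real difference is how you bound $S$. The paper keeps the constraint $m\mid dq$. With $g=\gcd(m,q)$ and $d_0=m/g$, the admissible $d$ are exactly $d_0r$ with $r\mid b/d_0$ and $r\le g$. This gives the cases (A-1) $d_0=b$ and (A-2) $m<b$, where $S=d_0^{q-g}/g!$ exactly, then (B-1), (B-2-1) $m\mid q$ and (B-2-2) $m\nmid q$. There the $r\ge 2$ terms are controlled through $p_0$-adic chains, and $m=b$ is added as a junction endpoint.

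You instead sum over all $d\mid b$ and let $d=b$ dominate. This is simpler and, I believe, still yields a bound that is convex in $m$ and negative outside a finite set of $(b,q)$. However, the heuristic needs correcting before you can fix constants.
\begin{itemize}
\item For $m<b$ the $d=b$ term cannot contribute to $[x^m]$, so it is not dominant there; the Cauchy bound is valid but lossy.
\item More seriously, you dropped $b^{(m-1)/2}$. Its logarithm is not $O(m+\log b)$: at $m=n/2$ it is about $\frac{bq}{4}\log b$, the same order as the main terms. Keeping it, the endpoint $m=n/2$ comes out near $q(2-\frac{b}{4})\log b-\frac{q(b-1)}{2}\log\frac{q}{2}-\frac{bq}{2}$ plus lower-order terms. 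That is still negative outside a finite set, so the approach survives, but the exceptional list must be recomputed from your own constants.
\item If you treat $m=2$ exactly, the convexity argument must run over $3\le m\le n/2$, because your crude bound at $m=2$, $b=3$ only wins for fairly large $q$.
\item If ``use $m!\,b^{(m-1)/2}m^{-q(b-1)}$ directly'' means discarding $S$, that is not allowed: at $m=n/2$ with $b$ odd, $S=b^{q/2}/(q/2)!$.
\end{itemize}

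There is also a concrete gap in your endgame. The pair $(b,q)=(3,2)$ cannot be settled by counting at all. Here $I_2/T=1/10$, $I_3/T=1/5$, and $I_2\cap I_3=\emptyset$ (a $y$ preserving both partitions fixes every point). Since $N/T=3/10$, you get $|I|=|N|=12$.

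For this pair the certificate you propose does not exist. The permutation $x$ is odd, so $G\ne A_6$, and the paper states that $G$ is never $S_6$. Its primitive example has order $120$, i.e.\ $\mathrm{PGL}(2,5)$ acting on six points, which contains no transposition and no $3$-cycle. So there is no $w$ of prime length $p_w\le n-3=3$, and \thmref{thm:pn-3} cannot be applied.

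The repair is already contained in your plan. Once you have checked that $y$ preserves none of the residue-class partitions, \lemref{lem:xres} shows $G$ is primitive, and \propref{prop:primaut} ($n$ composite) gives $\AD\cong\{1\}$ without identifying $G$. For instance, $y=(1\ 2\ 4)(3\ 5\ 6)$ gives $xy=(1\ 3\ 6\ 4\ 2\ 5)$ and preserves neither the classes mod~$2$ nor those mod~$3$. So the criterion to verify for the exceptional pairs is primitivity; \thmref{thm:pn-3} is only a way to identify $G$.
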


\begin{proof}
By \eqref{eq:genus}, the genus is

\begin{align}
 \f{n - (1 + q + 1)}{2} + 1 = \f{n-q}{2} \ge 2. 
\end{align}
Therefore, we have $n - q \ge 4$ and $n \equiv q \pmod{2}$, and hence $n \ge 6$.

Our goal is to show that 

\begin{align}
\f{I(b,q)}{T(b,q)} < \f{N(b, q)}{T(b,q)}.
\end{align}

By \thmref{thm:MN},

\begin{align}
\f{N(b,q)}{T(b,q)} \ge \f{2}{n+2} = \f{2}{n\left(1+\f{2}{n}\right)} \ge \f{2}{n\left(1+\f{2}{6}\right)} = \f{3}{2n}.
\end{align}
Thus it suffices to prove that

\begin{align}
\f{I(b, q)}{T(b, q)} < \f{3}{2n}.
\end{align}
Since there are $\tau(n) - 2$ integers $m$ such that $m \mid n$ and $2 \le m < n$ (where $\tau(n)$
denotes the divisor-counting function), it will be enough to show that for each such $m$,

\begin{align}
\f{I_{m}}{T} < \f{3}{2n\tau(n)}.
\end{align}
Since $\tau(n) \le 2\sqrt{n}$, we have

\begin{align}
\f{3}{2n\tau(n)} < \f{3}{4n^{\f{3}{2}}} = \f{3}{4(bq)^{\f{3}{2}}}.
\end{align}
Thus it suffices to show

\begin{align}
\label{eq:dest3}
F(b, q, m) \ceq \f{I_{m}}{T}\cdot \f{4}{3}(bq)^{\f{3}{2}} < 1
\end{align}
for all $m$ such that $m \mid n$ and $2 \le m < n$.

By \eqref{eq:imconv2},

\begin{align}
\label{eq:fbqm1}
F(b, q, m) &\le \f{4}{3}m!\,b^{\f{m}{2}+1}q^{\f{3}{2}}m^{-q(b-1)}S(b, q, m),
\end{align}
where

\begin{align}
\label{eq:sbqm}
S(b, q, m) &= \sum_{\{(d_{i}, t_{i})\}}\prod_{i}\f{d_{i}^{\left(\f{d_{i}q}{m}-1\right)t_{i}}}{t_{i}!}.
\end{align}
Furthermore, from \eqref{eq:imconv3},

\begin{align}
F(b, q, m) \le \f{4\sqrt{2\pi}}{3}b^{\f{m}{2}+1}q^{\f{3}{2}}m^{m-q(b-1)+\f{1}{2}}e^{-m+\f{1}{12m}}S(b, q, m).
\end{align}
Since $m \ge 2$, we have $e^{1/(12m)} \le e^{1/24}$, and therefore

\begin{align}
\label{eq:fbqm2}
F(b, q, m) \le C_{0}e^{-m}b^{\f{m}{2}+1}q^{\f{3}{2}}m^{m-q(b-1)+\f{1}{2}}S(b, q, m),
\q C_{0} \ceq \f{4\sqrt{2\pi}}{3}e^{\f{1}{24}}.
\end{align}
Taking logarithms gives

\begin{align}
\log F(b, q, m) \le{} &\log C_{0} - m + \left(\f{m}{2}+1\right)\log b + \f{3}{2}\log q + \left(m-q(b-1)+\f{1}{2}\right)\log m \\
\label{eq:fbqm3}
&+ \log S(b, q, m).
\end{align}
What we want to show is that $\log F(b, q, m) < 0$.

For $S(b, q, m)$ in \eqref{eq:sbqm}, let us define the set of admissible $d_{i}$ for \eqref{eq:diti} by

\begin{align}
\label{eq:c12}
D = \{ d \in \Zp \mid d \le m,\ d \mid b,\ m \mid dq \}.
\end{align}
Here,

\begin{align}
\exp\left(\sum_{d\in D}d^{\f{dq}{m}-1}x^{d}\right) = \prod_{d \in D}\exp\left(d^{\f{dq}{m}-1}x^{d}\right)
= \prod_{d \in D}\sum_{t=0}^{\infty}\f{d^{\left(\f{dq}{m}-1\right)t}x^{dt}}{t!}.
\end{align}

When we extract the coefficient of $x^{m}$ on the right-hand side, the remaining terms are precisely the
products and sums over all $\{(d_{i}, t_{i})\}$ satisfying $\sum_{i} d_{i} t_{i} = m$.
Since each $d_{i} \in D$, these satisfy \eqref{eq:diti}.
Thus, the expression coincides with \eqref{eq:sbqm}.
Therefore, by an estimate similar to that used in \eqref{eq:smxm}, we obtain

\begin{align}
\label{eq:c13}
S(b, q, m) &= [x^{m}]\,\exp\left(\sum_{d \in D}d^{\f{qd}{m}-1}x^{d}\right),
\end{align}
and furthermore,

\begin{align}
\label{eq:c05}
S(b, q, m) &\le \f{\exp\left(\sum_{d \in D}d^{\f{qd}{m}-1}R^{d}\right)}{R^{m}} \q \text{for all }R > 0.
\end{align}

Define

\begin{align}
g \ceq \gcd(m, q), \q d_{0} \ceq \f{m}{g}, \q q' \ceq \f{q}{g}, \q b' \ceq \f{b}{d_{0}}.
\end{align}
Since $m \mid n\ (= bq)$, we have $d_{0} \mid b$, and hence $b' \in \Zp$.

As $m \mid d_{i}q \Lra d_{0} \mid d_{i}$, 
the set of all $d_{i}$ satisfying \eqref{eq:diti} coincides with the set of integers of the form

\begin{align}
d_{i} = d_{0}r, \ 1 \le r \le g, \ r \mid b'.
\end{align}
Thus, \eqref{eq:c12} can be written as

\begin{align}
D = \left\{ d_{0}r \mid 1 \le r \le g, \ r \mid b' \right\}.
\end{align}
Focusing on the values of $r$, define

\begin{align}
D_{r} = \left\{ r \mid 1 \le r \le g, \ r \mid b' \right\}.
\end{align}
Then \eqref{eq:c13} becomes

\begin{align}
S(b, q, m) &= [x^{m}]\,\exp\left(\sum_{r \in D_{r}}(d_{0}r)^{\f{qd_{0}r}{m}-1}x^{d_{0}r}\right)
= [x^{m}]\,\exp\left(\sum_{r\in D_{r}}(d_{0}r)^{\f{qmr}{mg}-1}x^{d_{0}r}\right) \\
\label{eq:c13Dr}
&= [x^{m}]\,\exp\left(\sum_{r\in D_{r}}(d_{0}r)^{q'r-1}x^{d_{0}r}\right).
\end{align}

Furthermore, for the parameters required in estimating $S(b, q, m)$, set

\begin{align}
\label{eq:ABdef}
\A \ceq d_{0}^{q'-1}, \q \B \ceq \sum_{r \in D_{r}, r\ge 2}(d_{0}r)^{q'r-1}.
\end{align}

In what follows, we proceed by considering the following cases according to the values of $b$, $q$, and $m$.
\HL

\noindent
(A) $\lvert D_{r} \rvert = 1$ (\ie $D_{r}=\{1\}$)
\begin{itemize}[leftmargin=3.5em]
\item[(A-1)] $d_0=b$
\item[(A-2)] $m<b$
\end{itemize}

\medskip
\noindent
(B) $\lvert D_{r} \rvert \ge 2$
\begin{itemize}[leftmargin=3.5em]
\item[(B-1)] $\B < g/2$
\item[(B-2)] $\B \ge g/2$
  \begin{itemize}
  \item[(B-2-1)] $m \mid q$
  \item[(B-2-2)] $m \nmid q$
  \end{itemize}
\end{itemize}
\HL

In each case, we proceed as follows:

\begin{enumerate}
\item Find an upper bound $G(b, q, m)$ satisfying $\log F(b, q, m) \le G(b, q, m)$ and containing no factorials, summations, or products.
\item Show that $\partial^{2} G/\partial m^{2} > 0$.
Thus, for each pair $(b, q)$, the function $G$ is convex in $m$, and consequently its maximum occurs at
one of the endpoints of the admissible range of $m$.
\item Determine the range of $(b, q)$ for which $G$ is negative at all relevant boundary points of $m$
(namely, its minimum and maximum, and any junction points arising from case distinctions).
\end{enumerate}

In this way, we show that $G < 0$ holds except for finitely many exceptional cases.

For the finitely many pairs $(b, q)$ for which $G \ge 0$, we proceed as in \propref{prop:beq2} and explicitly
find permutations $x$ and $y$ such that $\Aut \msD \cong \{1\}$.
\HL

%%%%%%%%%%%%%%%%%%%%%%%%%%%%%%%%%%%%%%%%
\noindent
(A) The case $\lvert D_{r} \rvert = 1$ (\ie $D_{r}= \{ 1 \}$).

In this case, there is no $r$ satisfying both $2 \le r \le g$ and $r \mid b'$.

Moreover, since $d_{0} \mid b$, we have $d_{0} \le b$.

If $d_{0} = b$, then $b' = 1$, and hence $D_{r}=\{1\}$.

If $d_{0} < b$, assume $m \ge b$. Then taking $r = b' = b/d_{0}\ (> 1)$ yields $2\le r \le m/d_{0} = g$,
contradicting the assumption.

Therefore, when $D_{r} = \{ 1 \}$, we must have

\begin{align}
d_{0} = b \q \text{or} \q m < b.
\end{align}

Since $\lvert D_{r} \rvert = 1$, the only partition satisfying \eqref{eq:diti} is
$\{(d_{i}, t_{i})\} = \{(d_{0}, g)\}$. Hence,

\begin{align}
S(b, q, m) = \f{d_{0}^{\left(\f{d_{0}q}{m}-1\right)g}}{g!} = \f{d_{0}^{\left(\f{mq}{gm}-1\right)g}}{g!} = \f{d_{0}^{q-g}}{g!}.
\end{align}
Substituting this into \eqref{eq:fbqm1} and applying \eqref{eq:stirling}, we obtain

\begin{align}
F(b, q, m) &\le \f{4}{3}m!\,b^{\f{m}{2}+1}q^{\f{3}{2}}m^{-q(b-1)}\f{d_{0}^{q-g}}{g!} \\
&\le \f{\sqrt{2\pi m} \left(\f{m}{e}\right)^{m}e^{\f{1}{12m}}}{\sqrt{2\pi g} \left(\f{g}{e}\right)^{g}e^{\f{1}{12g+1}}}\cdot
\f{4}{3}b^{\f{m}{2}+1}q^{\f{3}{2}}m^{-q(b-1)}\left(\f{m}{g}\right)^{q-g} \\
&= \f{4}{3}b^{\f{m}{2}+1}q^{\f{3}{2}}m^{-q(b-1)}\left(\f{m}{e}\right)^{m-g}\left(\f{m}{g}\right)^{q+\f{1}{2}}
\exp\left(\f{1}{12m} - \f{1}{12g+1}\right).
\end{align}
Since $m \ge 2$ and $m \ge g$, we have

\begin{align}
\f{1}{12m} - \f{1}{12g+1} \le \f{1}{12m} - \f{1}{12m+1} = \f{1}{12m(12m+1)}\le \f{1}{12\cdot 2(12\cdot 2+1)} = \f{1}{600}.
\end{align}
Therefore,

\begin{align}
F(b, q, m) &\le C_{1}b^{\f{m}{2}+1}q^{\f{3}{2}}m^{-q(b-1)}\left(\f{m}{e}\right)^{m-g}\left(\f{m}{g}\right)^{q+\f{1}{2}}, \q C_{1} \ceq \f{4}{3}e^{\f{1}{600}}.
\end{align}
Let $G(b, q, m)$ denote the logarithm of the right-hand side. Then

\begin{align}
G(b, q, m) ={} &\log C_{1} + \left(\f{m}{2}+1\right)\log b + (m-g)(\log m - 1) + \f{1}{2}(\log m - \log g) \\
\label{eq:c17}
&+ \f{3}{2}\log q - q((b-2)\log m + \log g).
\end{align}
\HL

%%%%%%%%%%%%%%%%%%%%%%%%%%%%%%%%%%%%%%%%
\noindent
(A-1) The subcase $d_{0} = b$.

\noindent
\eg $(b, q, m) = (3, 2, 3), (5, 2, 5), (3, 4, 3), (3, 4, 6), (7, 2, 7)$

%$m \ge 3, d_{0} \ge 3, q' \ge 2$
\HL

In this case, since $d_{0} = m/g = b$, we have $g=m/b$ and $\log g = \log m - \log b$. Hence \eqref{eq:c17} becomes

\begin{align}
G(b, q, m) ={} &\log C_{1} + \left(\f{m}{2}+1\right)\log b + m\left(1-\f{1}{b}\right)(\log m - 1) + \f{1}{2}\log b + \f{3}{2}\log q \\
& - q((b-1)\log m - \log b),
\end{align}
and

\begin{align}
\f{\partial^{2} G}{\partial m^{2}} = \f{b-1}{m}\left(\f{q}{m}+\f{1}{b}\right) \ge \f{3-1}{m}\left(\f{q}{m}+\f{1}{b}\right) > 0.
\end{align}

Thus, for fixed $b$ and $q$, the function $G$ is convex in $m$.
Since $g = m/b \ge 1$, we have $m \ge b$; moreover, from $m \mid n$ and $m < n$, it follows that
$m \le n/2 = bq/2$. Hence $b \le m \le bq/2$.

Setting $H_{1}(b, q) \ceq G(b, q, b)$ and $H_{2}(b, q) \ceq G(b, q, bq/2)$, we obtain

\begin{align}
G(b, q, m) &\le \max\{H_{1}(b, q), H_{2}(b, q)\},
\end{align}
where

\begin{align}
H_{1}(b, q) &= \log C_{1} - b + 1 + \f{3}{2}\log q + \left(-(b-2)q+\f{3}{2}b+\f{1}{2}\right)\log b, \\
\f{\partial H_{1}}{\partial q} &= \f{3}{2q} - (b-2)\log b \le \f{3}{2\cdot 2} - (3-2)\log 3 < 0,
\end{align}
and

\begin{align}
H_{2}(b, q) ={} &\log C_{1} + \f{q}{2}(b-1)(\log 2-1) + \f{1}{4}(-bq+6q+6)\log b \\
&+ \f{1}{2}(-q(b-1)+3)\log q, \\
\f{\partial H_{2}}{\partial q} ={} &\f{3}{2q} + \f{1}{2}(b-1)(\log 2 - \log q) -b + 1 + \left(-\f{1}{4}b+\f{3}{2}\right)\log b \\
\le{} &\f{3}{2\cdot 2} + \f{1}{2}(b-1)(\log 2 - \log 2) -b + 1 + \left(-\f{1}{4}b+\f{3}{2}\right)\log b \\
={} &\f{3}{4} -b + 1 + \left(-\f{1}{4}b+\f{3}{2}\right)\log b.
\end{align}

Let the right-hand side be $g(b)$. Then

\begin{align}
g'(b) = - \f{5}{4} - \f{1}{4}\log b + \f{3}{2b} \le - \f{5}{4} - \f{1}{4}\log 3 + \f{3}{2\cdot 3} < 0 \q (b \ge 3).
\end{align}
Hence $g(b)$ is decreasing in $b$. Therefore

\begin{align}
\f{\partial H_{2}}{\partial q} \le g(3) = \f{3}{4} - 3 + 1 + \left(-\f{1}{4}\cdot 3+\f{3}{2}\right)\log 3 < 0.
\end{align}
Hence both $H_{1}(b, q)$ and $H_{2}(b, q)$ are decreasing in $q$. Therefore, neither exceeds
their value at $q = 2$.

Since $H_{1}(b, 2) = H_{2}(b, 2) = G(b, 2, b)$, we obtain

\begin{align}
G(b, q, m) &\le G(b, 2, b) = \log C_{1} + \f{3}{2}\log 2  - b + 1 - \f{b-9}{2}\log b.
\end{align}
Differentiating with respect to $b$, we have

\begin{align}
\dv{b}G(b, 2, b) &= - \f{3}{2} - \f{\log b}{2} + \f{9}{2b} \le - \f{3}{2} - \f{\log 3}{2} + \f{9}{2\cdot 3} < 0 \q (b \ge 3).
\end{align}
Thus $G(b, 2, b)$ is strictly decreasing for $b \ge 3$, and direct computation shows that it becomes negative
for $b \ge 6$.
Hence, in this range, we have $G(b, q, m) < 0$ for all $q$ and $m$.

For $3 \le b \le 5$, taking into account that $q$ must be even whenever $b$ is even, direct computation shows that

\begin{align}
&H_{1}(3, k)\ (2 \le k \le 5), H_{1}(4, 2), H_{1}(5, 2) > 0, \\
&H_{1}(3, 6), H_{1}(4, 4), H_{1}(5, 3) < 0, \\
&H_{2}(3, k)\ (2 \le k \le 4), H_{2}(4, 2), H_{2}(5, 2) > 0, \\
&H_{2}(3, 5), H_{2}(4, 4), H_{2}(5, 3) < 0.
\end{align}

Since both $H_{1}$ and $H_{2}$ are decreasing in $q$, $H_{1}$ or $H_{2}$ becomes non-negative only for

\begin{align}
\label{eq:exA1}
(b, q) = (3, 2), (3, 3), (3, 4), (3, 5), (4, 2), (5, 2).
\end{align}

Therefore, when $\lvert D_{r} \rvert = 1$ and $d_{0} = b$, inequality \eqref{eq:dest3} holds except for these pairs.
\HL

%%%%%%%%%%%%%%%%%%%%%%%%%%%%%%%%%%%%%%%%
\noindent
(A-2) The subcase $m < b$.

\noindent
\eg $(b, q, m) = (3, 2, 2), (5, 2, 2), (3, 4, 2), (6, 2, 3), (6, 2, 4), (7, 2, 2)$

%$b' \ge 2$
\HL

Differentiating \eqref{eq:c17} with respect to $g$ gives

\begin{align}
-(\log m - 1) - \f{1}{2g} - \f{q}{g} < - (\log 2 - 1) - 0 - 1 < 0,
\end{align}
therefore \eqref{eq:c17} is strictly decreasing in $g$.

Hence, letting $G_{2}(b, q, m)$ denote the expression obtained by substituting $g = 1$, we have

\begin{align}
G(b, q, m) \le{} &G_{2}(b, q, m) \\
={} &\log C_{1} + \left(\f{m}{2}+1\right)\log b + (m-1)(\log m - 1) + \f{3}{2}\log q \\
&- \left(q(b-2) - \f{1}{2}\right)\log m.
\end{align}
Moreover,

\begin{align}
\pdv[2]{G_{2}}{m} &= \f{1}{m} + \f{q(b-2) + \f{1}{2}}{m^{2}} \ge \f{1}{m} + \f{q(3-2) + \f{1}{2}}{m^{2}} > 0.
\end{align}

Thus $G_{2}$ is convex in $m$.
Since $2 \le m < b$, its maximum is attained at the endpoints of this interval.

Setting

\begin{align}
H_{1}(b, q) \ceq G_{2}(b, q, 2), \q H_{2}(b, q) \ceq G_{2}\left(b, q, b\right),
\end{align}
we obtain

\begin{align}
G(b, q, m) &\le G_{2}(b, q, m) \le \max\left\{H_{1}(b, q), H_{2}\left(b, q\right)\right\},
\end{align}
where

\begin{align}
H_{1}(b, q) &= \log C_{1} + \left(\f{3}{2} - q(b-2)\right)\log 2 - 1 + 2\log b + \f{3}{2}\log q , \\
\pdv{H_{1}}{b} &= -q\log 2+\f{2}{b} \le -2\log 2 + \f{2}{3} < 0, \\
\pdv{H_{1}}{q} &= - (b-2)\log 2 + \f{3}{2q}
\begin{dcases}
= - (3-2)\log 2 + \f{3}{2\cdot 2} > 0 & (b = 3, q = 2) \\
\le - (3-2)\log 2 + \f{3}{2\cdot 3} < 0 & (b = 3, q \ge 3) \\
\le - (4-2)\log 2 + \f{3}{2\cdot 2} < 0 & (b \ge 4)
\end{dcases}, \\
\end{align}
and

\begin{align}
H_{2}(b, q) &= \log C_{1} - b + 1 + \left(\f{3b+1}{2}-q(b-2)\right)\log b + \f{3}{2}\log q, \\
\pdv{H_{2}}{b} &= \f{1}{2} + \left(\f{3}{2}-q\right)\log b - q\left(1-\f{2}{b}\right) + \f{1}{2b} \\
&\le \f{1}{2} + \left(\f{3}{2}-2\right)\log 3 - 2\left(1-\f{2}{3}\right) + \f{1}{2\cdot 3} < 0, \\
\pdv{H_{2}}{q} &= -(b-2)\log b + \f{3}{2q} \le -(3-2)\log 3 + \f{3}{2\cdot 2} < 0.
\end{align}

Thus $H_{1}$ is strictly decreasing in $b$, and, except for the case $(b, q) = (3, 2)$, also strictly decreasing in $q$.
Similarly, $H_{2}$ is strictly decreasing in both $b$ and $q$.
Direct computation shows that

\begin{align}
&H_{1}(3, k)\ (2 \le k \le 8), H_{1}(4, 2), H_{1}(5, 2) > 0, \\
&H_{1}(3, 9), H_{1}(4, 4), H_{1}(5, 3) , H_{1}(6, 2) < 0, \\
&H_{2}(3, k)\ (2 \le k \le 5), H_{2}(4, 2), H_{2}(5, 2) > 0, \\
&H_{2}(3, 6), H_{2}(4, 4), H_{2}(5, 3) , H_{2}(6, 2) < 0.
\end{align}

Therefore, when $\lvert D_{r} \rvert = 1$ and $m < b$,
inequality \eqref{eq:dest3} holds except in the following cases:

\begin{align}
\label{eq:exA2}
(b, q) = (3, k) \ (2 \le k \le 8), (4, 2), (5, 2).
\end{align}

\HL

%%%%%%%%%%%%%%%%%%%%%%%%%%%%%%%%%%%%%%%%
\noindent
(B) The case $\lvert D_{r} \rvert \ge 2$.

In this case, since there exists an integer $r$ such that $2 \le r \le g$ and $r \mid b'$, we must have $g \ge 2$ and $b' \ge 2$.

Moreover, when $(b, q) = (3, 2)$, we have $n = 6$, and hence $m = 2$ or $3$. In this case,

\begin{align}
&m = 2\ \Ra\ g = 2,\ b' = 3\ \Ra\ D_{r} = \{ 1 \}, \\
&m = 3\ \Ra\ g = 1\ \Ra\ D_{r} = \{ 1 \}.
\end{align}
Thus, for $(b, q) = (3, 2)$, we always have $D_{r} = \{1\}$.
Therefore, this pair can be excluded from the present case.

From \eqref{eq:c13Dr} we have

\begin{align}
S(b, q, m) &= [x^{m}]\,\exp\left(\sum_{r\in D_{r}}(d_{0}r)^{q'r-1}x^{d_{0}r}\right) \\
&= [y^{g}]\,\exp\left(\sum_{r\in D_{r}}(d_{0}r)^{q'r-1}y^{r}\right) \q (y = x^{d_{0}}) \\
&\le \f{\exp\left(\sum_{r\in D_{r}}(d_{0}r)^{q'r-1}R^{r}\right)}{R^{g}} \q (\text{for all } R>0).
\end{align}
Therefore,

\begin{align}
\label{eq:d02}
\log S(b, q, m) &\le \sum_{r\in D_{r}}(d_{0}r)^{q'r-1}R^{r} - g \log R \q (\text{for all } R>0).
\end{align}
Using $\A$ and $\B$ defined in \eqref{eq:ABdef}, we obtain, for $0 < R \le 1$,

\begin{align}
\log S(b, q, m) &\le \A R + \B R^{2} - g\log R.
\end{align}
Differentiating the right-hand side with respect to $R$ yields $(2\B R^{2} + \A R - g)/R$,
and hence it attains its minimum at
$R = (-\A + \sqrt{\A^{2}+8\B g})/(4\B)$.
Taking a nearby point $R = \sqrt{8\B g}/(4\B) = \sqrt{g/(2\B)}$ and setting

\begin{align}
\label{eq:R0}
R_{0} \ceq \min\left\{\sqrt{\f{g}{2\B}},  \ 1\right\},
\end{align}
we have $0 < R_{0} \le 1$, and hence

\begin{align}
\label{eq:smr1}
\log S(b, q, m) &\le \A R_{0} + \B R_{0}^{2} - g\log R_{0}.
\end{align}

\HL

%%%%%%%%%%%%%%%%%%%%%%%%%%%%%%%%%%%%%%%%
\noindent
(B-1) The subcase $\B < g/2$.

\noindent
\eg $(b, q, m) = (4, 10, 20), (4, 12, 24), (4, 14, 28), (3, 19, 19), (3, 20, 20)$

%$b' \ge 2$
\HL

In this situation, we have $R_{0} = 1$ by \eqref{eq:R0}. Since the sum $\B$ contains at least one term with $r \ge 2$, we obtain

\begin{align}
 \B = \sum_{r \in D_{r}, r\ge 2}(d_{0}r)^{q'r-1} \ge 2^{2-1} = 2,
\end{align}
and since $\B < g/2$, we obtain $g \ge 5$, which implies $q, m \ge 5$.

Assume $g = 5$. Then:

\begin{itemize}
\item If $q$ is odd, then $b$ is also odd, and therefore every divisor $r \ge 2$ of $b'$ is at least~$3$. \\
Hence $\B \ge 3^{3-1} = 9$, but since $\B < g/2$, this would force $g \ge 19$, a contradiction.
\item If $q$ is even, then $q \ge 10$ since $5 \mid q$, and we have $q' = q/5 \ge 2$. \\
Therefore, $\B \ge 2^{2q'-1} \ge 2^{3} = 8 > 5/2 = g/2$, again a contradiction.
\end{itemize}
Thus we must have $g \ge 6$ (and in fact $g \ge 19$ when $q$ is odd).
Consequently, $q, m \ge 6$.

We have

\begin{align}
\f{\A}{\B} \le \f{d_{0}^{q'-1}}{(2d_{0})^{2q'-1}} = \f{1}{2^{2q'-1}d_{0}^{q'}} \le \f{1}{2},
\end{align}
and $R_{0} = 1$. Therefore, from \eqref{eq:smr1} we obtain

\begin{align}
\log S(b, q, m) \le \A + \B \le \f{3}{2}\B < \f{3}{4}g \le \f{3}{4}m.
\end{align}
Substituting this into the right-hand side of \eqref{eq:fbqm3}, we obtain the following upper bound,
which we denote by $G(b,q,m)$:

\begin{align}
G(b, q, m) &= \log C_{0} - \f{1}{4}m + \left(\f{m}{2}+1\right)\log b + \f{3}{2}\log q + \left(m-q(b-1)+\f{1}{2}\right)\log m,
\end{align}
and

\begin{align}
\pdv[2]{G}{m} &= \f{1}{m} + \f{q(b-1)-\f{1}{2}}{m^{2}}  \ge \f{1}{m} + \f{6(3-1)-\f{1}{2}}{m^{2}} > 0.
\end{align}
Thus $G$ is convex in $m$.
Since $6 \le m \le bq/2$, the maximum of $G$ over this range is attained at one of the two endpoints.

Let $H_{1}(b, q) \ceq G(b, q, 6),\ H_{2}(b, q) \ceq G(b, q, bq/2)$, so that

\begin{align}
G(b, q, m) &\le \max\{H_{1}(b, q), H_{2}(b, q)\},
\end{align}
where

\begin{align}
H_1(b, q) &= \left(- q \left(b - 1\right) + \f{13}{2}\right) \log 6 - \f{3}{2} + \log C_{0} + 4\log b + \f{3}{2}\log q, \\
\pdv{H_{1}}{b} &= - q \log 6 + \f{4}{b} \le - 6 \log 6 + \f{4}{3} < 0, \\
\pdv{H_{1}}{q} &= \left(1 - b\right) \log 6 + \f{3}{2 q} \le \left(1 - 3\right) \log 6  + \f{3}{2 \cdot 6} < 0,
\end{align}
and

\begin{align}
H_2(b, q) ={} &\left(-\f{b q}{4} + q + \f{3}{2}\right) \log b + \left(-\f{b q}{2} + q +2\right) \log q
+ \left(\f{bq}{2}-q-\f{1}{2}\right)\log 2 \\
&+ \log C_{0} - \f{bq}{8}, \\
\pdv{H_{2}}{b} ={} &\f{q}{8}\left(-2\log b - 4\log q - 3 + 4\log 2+ \f{8}{b}\right) + \f{3}{2b} \\
\le{} &\f{6}{8}\left(-2\log 3 - 4\log 6 - 3 + 4\log 2+ \f{8}{3}\right) + \f{3}{2\cdot 3} < 0, \\
 \pdv{H_{2}}{q} ={} &\left(1-\f{b}{4}\right)\log b + \left(1-\f{1}{2}b\right)\log q + b\left(\f{1}{2}\log 2 - \f{5}{8}\right) - \log 2 + 1 + \f{2}{q}\\
\le{} &\left(1-\f{3}{4}\right)\log 3 + \left(1-\f{1}{2}\cdot 3\right)\log 6 + 3\left(\f{1}{2}\log 2 - \f{5}{8}\right) - \log 2 + 1 + \f{2}{6} < 0.
\end{align}

Thus both $H_{1}$ and $H_{2}$ are strictly decreasing in $b$ and $q$.
Direct computation shows that

\begin{align}
&H_{1}(3, 6) < 0, \\
&H_{2}(3, 6), H_{2}(3, 7), H_{2}(3, 8) > 0, \\
&H_{2}(3, 9), H_{2}(4, 6) < 0.
\end{align}
Therefore, in the case $\lvert D_{r} \rvert \ge 2$ and $\B < g/2$, inequality \eqref{eq:dest3} holds except for
the following pairs:

\begin{align}
\label{eq:exB1}
(b, q) = &(3, 6), (3, 7), (3, 8).
\end{align}

\HL

%%%%%%%%%%%%%%%%%%%%%%%%%%%%%%%%%%%%%%%%
\noindent
(B-2) The Subcase $\B \ge g/2$.

In this situation, we have $R_{0} = \sqrt{g/(2\B)}$ by \eqref{eq:R0}. Since $D_{r}$ contains at least two elements not exceeding $g$, we have $g \ge 2$,
and since $b'$ has at least two divisors, we also have $b' \ge 2$.

Let $p_{0}$ be the smallest prime divisor of $b' = b/d_{0} = bg/m$ and write the prime factorization of $b'$ as

\begin{align}
b' = p_{0}^{e_{0}}\cdots p_{k-1}^{e_{k-1}} \q (k \ge 1,\ p_{0} < \cdots < p_{k-1},\ e_{i} \ge 1).
\end{align}
Then

\begin{align}
D_{r} &= \{ r \mid 1 \le r \le g,\ r \mid b' \} = \{ r = p_{0}^{f_{0}}\cdots p_{k-1}^{f_{k-1}} \mid 0 \le f_{i} \le e_{i},\ r \le g \}.
\end{align}
Fixing $f_{1}, \dotsc, f_{k-1}$, consider the $p_{0}$-adic chain

\begin{align}
\label{eq:chain}
l,\ p_{0}l,\ p_{0}^{2}l,\ \dotsc,\ p_{0}^{e_{0}}l \q (l = p_{1}^{f_{1}}\cdots p_{k-1}^{f_{k-1}}),
\end{align}
then every $r \in D_{r}$ lies in exactly one such chain.

For $\B$ defined by \eqref{eq:ABdef},
the ratio of two adjacent terms within the same chain
corresponding to $r = p_{0}^{f_{0}}l$ is

\begin{align}
\f{(d_{0}p_{0}^{f_{0}+1}l)^{q'p_{0}^{f_{0}+1}l-1}} {(d_{0}p_{0}^{f_{0}}l)^{q'p_{0}^{f_{0}}l-1}}
&= \left(\f{d_{0}p_{0}^{f_{0}+1}l} {d_{0}p_{0}^{f_{0}}l}\right)^{q'p_{0}^{f_{0}}l-1}
(d_{0}p_{0}^{f_{0}+1}l)^{q'(p_{0}-1)p_{0}^{f_{0}}l} \\
&= p_{0}^{q'p_{0}^{f_{0}}l-1} (d_{0}p_{0}^{f_{0}+1}l)^{q'(p_{0}-1)p_{0}^{f_{0}}l} \\
&\ge p_{0}^{q'-1} (d_{0}p_{0})^{q'(p_{0}-1)} = d_{0}^{q'(p_{0}-1)}p_{0}^{q'p_{0}-1} \\
&\ge 2^{2q'-1}d_{0}^{q'}.
\end{align}
Let this final expression be denoted by $\Lambda$.
If $M_{l}$ is the largest value of $(d_{0}r)^{q'r-1}$ within the chain corresponding to $l$,
then the total contribution $\B_{l}$ of that chain satisfies

\begin{align}
\B_{l} < M_{l}\f{\Lambda}{\Lambda-1}.
\end{align}

Let $\tau(b')$ denote the number of divisors of $b'$. Then we have

\begin{align}
\tau(b') = (e_{0} + 1) \cdots (e_{k-1} + 1).
\end{align}
Moreover, it is well known that $\tau(b') \le 2\sqrt{b'}$.

The number $c$ of the chains \eqref{eq:chain} is equal to the number of divisors of $b'$ that are not divisible by $p_{0}$.
Hence

\begin{align}
c = \f{\tau(b')}{e_{0}+1} \le \f{\tau(b')}{2} \le \sqrt{b'}.
\end{align}

Therefore, if $M$ denotes the largest value among the $M_{l}$, then

\begin{align}
\B \le \sum_{l}\B_{l} < \f{\Lambda}{\Lambda-1}\sum_{l}M_{l} \le M\sqrt{b'}\f{\Lambda}{\Lambda-1}.
\end{align}
Since $M = (d_{0}r)^{q'r-1}$ for some $r \in D_{r}$, and since every such $r$ satisfies
$r \le \min\{g, b'\} = \min\{g, bg/m\} = g \min\{1, b/m\}$, we obtain

\begin{align}
M &\le 
\begin{dcases}
(d_{0}g)^{q'g-1} = \left(\f{m}{g}\cdot g\right)^{\f{q}{g}\cdot g - 1} = m^{q-1} & (m \le b) \\
(d_{0}b')^{q'\cdot\f{bg}{m}-1} = \left(d_{0} \cdot \f{b}{d_{0}}\right)^{\f{q}{g}\cdot \f{bg}{m} - 1} = b^{\f{bq}{m}-1} & (m \ge b)
\end{dcases}.
\end{align}
Thus,

\begin{align}
\label{eq:beta-lambda}
\B &<
\begin{dcases}
\f{\Lambda}{\Lambda-1}\sqrt{b'}\,m^{q-1} = \f{\Lambda}{\Lambda-1}\sqrt{\f{bg}{m}}\,m^{q-1} & (m \le b) \\
\f{\Lambda}{\Lambda-1}\sqrt{b'}\,b^{\f{bq}{m}-1} = \f{\Lambda}{\Lambda-1}\sqrt{\f{bg}{m}}\,b^{\f{bq}{m}-1} & (m \ge b)
\end{dcases},
\end{align}
and in \eqref{eq:d02}, for $0 < R \le 1$,

\begin{align}
\log S(b, q, m) &\le \sum_{r\in D_{r}}(d_{0}r)^{q'r-1}R^{r} - g \log R \\
&\le \A R + \B R^{2} - g \log R,
\end{align}
where $\A$ and $\B$ are defined by \eqref{eq:ABdef}.

We can put $R = R_{0} = \sqrt{g/(2\B)}$, and therefore

\begin{align}
\log S(b, q, m) &\le \A R_{0} + \B R_{0}^{2} - g \log R_{0} \\
&= \A\sqrt{\f{g}{2\B}} + \f{g}{2}\left(1+\log\f{2\B}{g}\right) \\
\label{eq:B2lsm}
&= \A\sqrt{\f{g}{2\B}} + \f{g}{2}\left(1+\log 2 + \log \B - \log g\right).
\end{align}
\HL

%%%%%%%%%%%%%%%%%%%%%%%%%%%%%%%%%%%%%%%%
\noindent
(B-2-1) The subcase $m \mid q$.

\noindent
\eg $(b, q, m) = (4, 2, 2), (3, 3, 3), (3, 4, 4), (6, 2, 2), (3, 5, 5)$
\HL

In this case, we have $2 \le g = m \le q$, together with $d_{0} = m/g = 1$ and $b' = b/d_{0} = b$. Then

\begin{align}
&\Lambda = 2^{2q'-1}d_{0}^{q'} = 2^{2q'-1} \ge 2, \\
&\f{\Lambda}{\Lambda-1} \le \f{2}{2-1} = 2, \\
&\A\sqrt{\f{g}{2\B}} = d_{0}^{q'-1}\sqrt{\f{g}{2\B}} = \sqrt{\f{g}{2\B}} \le 1,
\end{align}
and by \eqref{eq:beta-lambda},

\begin{align}
\B &<
\begin{dcases}
2\sqrt{b}\,m^{q-1} & (m \le b) \\
2\sqrt{b}\,b^{\f{bq}{m}-1} & (m \ge b)
\end{dcases}.
\end{align}
Therefore, by \eqref{eq:B2lsm}, for $m \le b$ we have

\begin{align}
\log S(b, q, m) &\le 1 + \f{g}{2}\left(1+\log 2 + \log 2\sqrt{b}\,m^{q-1} - \log g\right) \\
&= 1 + \f{m}{2}\left(1+\log 2 + \log 2\sqrt{b}\,m^{q-1} - \log m\right) \\
&= 1 + \f{m}{2}\left(1 + 2\log 2 + \f{1}{2}\log b + (q-2)\log m\right),
\end{align}
and for $m \ge b$ we have

\begin{align}
\log S(b, q, m) &\le 1 + \f{g}{2}\left(1+\log 2 + \log 2\sqrt{b}\,b^{\f{bq}{m}-1} - \log g\right) \\
&= 1 + \f{m}{2}\left(1+\log 2 + \log 2\sqrt{b}\,b^{\f{bq}{m}-1} - \log m\right) \\
&= 1 + \f{m}{2}\left(1 + 2\log 2 + \left(\f{bq}{m}-\f{1}{2}\right)\log b - \log m\right).
\end{align}

Substituting these estimates into the right-hand side of \eqref{eq:fbqm3} and denoting the
resulting quantity by $G(b, q, m)$, we obtain

\begin{align}
G(b, q, m) ={} &\log C_{0} - m + \left(\f{m}{2}+1\right)\log b + \f{3}{2}\log q + \left(m-q(b-1)+\f{1}{2}\right)\log m \\
&+
\begin{dcases}
1 + \f{m}{2}\left(1 + 2\log 2 + \f{1}{2}\log b + (q-2)\log m\right) & (m \le b) \\
1 + \f{m}{2}\left(1 + 2\log 2 + \left(\f{bq}{m}-\f{1}{2}\right)\log b - \log m\right) & (m \ge b)
\end{dcases},
\end{align}
and

\begin{align}
\pdv[2]{G}{m} =
\begin{dcases}
 \f{q (2b  + m - 2) - 1}{2m^{2}} \ge \f{2 (2\cdot 3  + 2 - 2) - 1}{2m^{2}} > 0 & (m \le b) \\
 \f{2q (b-1)+m - 1}{2m^{2}} \ge \f{2\cdot 2 (3-1)+2 - 1}{2m^{2}}  > 0 & (m \ge b)
 \end{dcases}.
\end{align}

Therefore,  in both ranges $m \le b$ and $m \ge b$, $G$ is convex in $m$. Since $2 \le m \le q$,
we need only evaluate $G$ at
$m = 2$ and $m = q$ when $q < b$, and at $m = 2$, $m = b$, and $m = q$ when $q \ge b$.

Let $H_{1}(b, q) \ceq G(b, q, 2)$, $H_{2}(b, q) \ceq G(b, q, q)$, and $H_{3}(b, q) \ceq G(b, q, b)$. Then

\begin{align}
G(b, q, m) &\le
\begin{dcases}
\max\{H_{1}(b, q), H_{2}(b, q)\} & (q < b) \\
\max\{H_{1}(b, q), H_{2}(b, q), H_{3}(b, q)\} & (q \ge b)
\end{dcases}.
\end{align}
Since $m = 2$ always satisfies $m \le b$, we have

\begin{align}
H_{1}(b, q) &= \log C_{0} + \left(-bq +2q +\f{5}{2}\right)\log 2  + \f{5}{2}\log b + \f{3}{2}\log q, \\
\pdv{H_{1}}{b} &= - q \log 2 + \f{5}{2b} \le - 2 \log 2 + \f{5}{2\cdot 3} < 0, \\
\pdv{H_{1}}{q} &= -(b-2)\log 2 + \f{3}{2q} \le
\begin{dcases}
-(3-2)\log 2 + \f{3}{2\cdot 3} < 0 & (b = 3) \\
-(4-2)\log 2 + \f{3}{2\cdot 2} < 0 & (b \ge 4)
\end{dcases}.
\end{align}
The last inequality holds because $(b, q) \ne (3, 2)$.

When $q < b$, note that $(b, q) \ne (3,2)$ implies $b \ge 4$, and when $b = 4$ the integer $q$ must be even,
hence $q = 2$. Under these assumptions we have

\begin{align}
H_{2}(b, q) &= \log C_{0} + 1 + \left(\log 2 - \f{1}{2}\right)q + \left(\f{3}{4}q+1\right)\log b + \left(-bq+\f{1}{2}q^{2}+q+2\right)\log q, \\
\pdv{H_{2}}{b} &= q\left(\f{3}{4b}-\log q\right) + \f{1}{b} \le 2\left(\f{3}{4\cdot 3}-\log 2\right) + \f{1}{3} < 0, \\
\pdv{H_{2}}{q} &= (- b+q+1) \log q - b + \f{q}{2} + \f{3}{4}\log b + \f{1}{2} + \log 2 + \f{2}{q} \\
&\begin{dcases}
= (- 4+2+1) \log 2 - 4 + \f{2}{2} + \f{3}{4}\log 4 + \f{1}{2} + \log 2 + \f{2}{2} < 0 & (b = 4,\ q = 2) \\
= 0 \log q - b + \f{b-1}{2} + \f{3}{4}\log b + \f{1}{2} + \log 2 + \f{2}{b-1} < 0 & (b \ge 5,\ q = b-1) \\
\le - \log 2 - b + \f{b-2}{2} + \f{3}{4}\log b + \f{1}{2} + \log 2 + \f{2}{2} < 0 & (b \ge 5,\ q \le b-2)
\end{dcases}.
\end{align}

When $q \ge b$, we have

\begin{align}
H_{2}(b, q) &= \log C_{0} + 1 + \left(\log 2- \f{1}{2}\right)q + \left(\f{bq}{2}+\f{1}{4}q+1\right)\log b + \left(-bq+\f{3}{2}q+2\right)\log q, \\
\pdv{H_{2}}{b} &= \f{q}{2}\left(\log b - 2\log q + 1 + \f{1}{2b}\right) + \f{1}{b} \\
&\begin{dcases}
= \f{3}{2}\left(\log 3 - 2\log 3 + 1 + \f{1}{2\cdot 3}\right) + \f{1}{3} > 0 & (b = q = 3) \\
= \f{b}{2}\left(\log b - 2\log b + 1 + \f{1}{2b}\right) + \f{1}{b} < 0 & (4 \le b = q) \\
\le \f{b+1}{2}\left(\log b - 2\log (b+1) + 1 + \f{1}{2b}\right) + \f{1}{b} < 0 & (3 \le b < q)
\end{dcases}, \\
\pdv{H_{2}}{q} &= \left(\f{b}{2}+\f{1}{4}\right)\log b + \left(-b+\f{3}{2}\right)\log q - b + 1 + \log 2 + \f{2}{q} \\
&\le \left(\f{b}{2}+\f{1}{4}\right)\log b + \left(-b+\f{3}{2}\right)\log b - b + 1 + \log 2 + \f{2}{b} \\
&= \left(-\f{b}{2}+\f{7}{4}\right)\log b - b + 1 + \log 2 + \f{2}{b} \\
&\begin{dcases}
= \left(-\f{3}{2}+\f{7}{4}\right)\log 3 - 3 + 1 + \log 2 + \f{2}{3} < 0 & (b = 3) \\
\le \underbrace{\left(-\f{b}{2}+\f{7}{4}\right)\log b}_{<0} - 4 + 1 + \log 2 + \f{2}{4} < 0 & (b \ge 4)
\end{dcases},
\end{align}
and

\begin{align}
H_{3}(b, q) &= \log C_{0} + 1 + \left(\log 2- \f{1}{2}\right)b + \left(-\f{bq}{2}+\f{3}{4}b+q+\f{3}{2}\right)\log b + \f{3}{2}\log q, \\
\pdv{H_{3}}{b} &= \left(\f{3}{4}-\f{1}{2}q\right)\log b + q\left(\f{1}{b}-\f{1}{2}\right) + \f{1}{4} + \log 2 + \f{3}{2b} \\
&\begin{dcases}
= \left(\f{3}{4}-\f{1}{2}\cdot 3\right)\log 3 + 3\left(\f{1}{3}-\f{1}{2}\right) + \f{1}{4} + \log 2 + \f{3}{2\cdot 3} > 0 & (b = q = 3) \\
\le \left(\f{3}{4}-\f{1}{2}\cdot 4\right)\log 3 + 4\left(\f{1}{3}-\f{1}{2}\right) + \f{1}{4} + \log 2 + \f{3}{2\cdot 3} < 0 & (b = 3,\ q > b) \\
\le \left(\f{3}{4}-\f{1}{2}\cdot 4\right)\log 4 + 4\left(\f{1}{4}-\f{1}{2}\right) + \f{1}{4} + \log 2 + \f{3}{2\cdot 4} < 0 & (b \ge 4)
\end{dcases}, \\
\pdv{H_{3}}{q} &= \left(1-\f{b}{2}\right)\log b + \f{3}{2q} \le \left(1-\f{3}{2}\right)\log 3 + \f{3}{2\cdot 3} < 0.
\end{align}
Therefore, except for $H_{2}$ and $H_{3}$ in the case $b = q = 3$, all of $H_{1}$, $H_{2}$, and $H_{3}$
are decreasing in both $b$ and $q$.

Taking into account that $q$ must be even whenever $b$ is even, direct computation gives

\begin{align}
&H_{1}(3, k)\ (3 \le k \le 13), H_{1}(4, 2), H_{1}(4, 4), H_{1}(4, 6), \\
&\hspace{1em}H_{1}(5, 2), H_{1}(5, 3), H_{1}(5, 4), H_{1}(6, 2), H_{1}(7, 2), H_{1}(8, 2)  > 0, \\
&H_{1}(3, 14), H_{1}(4, 8), H_{1}(5, 5), H_{1}(6, 4), H_1(7, 3), H_{1}(9, 2) < 0, \\
&H_{2}(3, k)\ (3 \le k \le 7), H_{2}(4, 2), H_{2}(4, 4), H_{2}(4, 6), \\
&\hspace{1em}H_{2}(5, 2), H_{2}(5, 3), H_{2}(5, 4), H_{2}(5, 5), H_{2}(6, 2), H_{2}(7, 2), H_{2}(8, 2) > 0, \\
&H_{2}(3, 8), H_{2}(4, 8), H_{2}(5, 6), H_{2}(6, 4), H_{2}(7, 3), H_{2}(9, 2) < 0, \\
&H_{3}(3, k)\ (3 \le k \le 20), H_{3}(4, 4), H_{3}(4, 6), H_{3}(4, 8), H_{3}(5, 5) > 0, \\
&H_{3}(3, 21), H_{3}(4, 10), H_{3}(5, 6) < 0.
\end{align}
Therefore, in the case $\lvert D_{r} \rvert \ge 2$, $\B \ge g/2$, and $m \mid q$, the inequality \eqref{eq:dest3} holds
except for the following pairs:

\begin{align}
(b, q) ={} &(3, k)\ (3 \le k \le 20),\ (4, k)\ (k = 2, 4, 6, 8),\ (5, k)\ (2 \le k \le 5), \\
\label{eq:exB21}
&(6, 2), (7, 2), (8, 2).
\end{align}

\HL

%%%%%%%%%%%%%%%%%%%%%%%%%%%%%%%%%%%%%%%%
\noindent
(B-2-2) The subcase $m \nmid q$.

\noindent
\eg $(b, q, m) = (4, 2, 4), (6, 2, 6), (4, 4, 8), (8, 2, 4), (8, 2, 8)$
\HL

In this case, since $\lvert D_{r} \rvert \ge 2$, there exists an integer $r \in D_{r}$ with $2 \le r \le g$ and $r \mid b'$.

Moreover, since $m \nmid q$, we have $g=\gcd(m,q)<m$, and since $g\mid m$, this implies $g\le m/2$.
Therefore $2 \le g \le m/2$, $m \ge 4$, and $d_{0} = m/g \ge 2$.
Furthermore, since $b = d_{0}b' \ge 2b'$ and $b' \ge 2$, we obtain $b \ge 4$.

Since $D_{r}$ contains such an $r \ge 2$, the sum in \eqref{eq:ABdef} contains at least one term with $r \ge 2$, and hence

\begin{align}
\B &= \sum_{r \in D_{r}, r\ge 2}(d_{0}r)^{q'r-1} \ge (2d_{0})^{2q'-1}.
\end{align}
It follows that, in \eqref{eq:B2lsm},

\begin{align}
\A\sqrt{\f{g}{2\B}} &= d_{0}^{q'-1}\sqrt{\f{g}{2\B}} \le d_{0}^{q'-1}\sqrt{\f{g}{2}}(2d_{0})^{-q'+\f{1}{2}}
= 2^{-q'+\f{1}{2}}d_{0}^{-\f{1}{2}}\sqrt{\f{g}{2}} \\
\label{eq:alpha}
&\le (2d_{0})^{-\f{1}{2}}\sqrt{\f{g}{2}} = \sqrt{\f{g}{2m}}\sqrt{\f{g}{2}} = \f{g}{2\sqrt{m}} \le \f{\f{m}{2}}{2\sqrt{m}} = \f{\sqrt{m}}{4}.
\end{align}

Since

\begin{align}
\Lambda &= 2^{2q'-1}d_{0}^{q'} \ge 2^{2\cdot 1-1}\cdot 2^{1} = 4,
\end{align}
by \eqref{eq:beta-lambda}, we obtain

\begin{align}
\label{eq:Bbound}
\B &<
\begin{dcases}
\f{4}{3}\sqrt{\f{bg}{m}}m^{q-1} & (m \le b) \\
\f{4}{3}\sqrt{\f{bg}{m}}b^{\f{bq}{m}-1} & (m \ge b)
\end{dcases}.
\end{align}
\HL

\noindent
(i) The subcase $m \le b$.

By \eqref{eq:B2lsm}, \eqref{eq:alpha}, and \eqref{eq:Bbound}, we obtain the following estimate for $m \le b$:

\begin{align}
\log S&(b, q, m) \le \f{\sqrt{m}}{4} + \f{g}{2}\left(1+\log 2 + \log \f{4}{3}\sqrt{\f{bg}{m}}m^{q-1} - \log g\right) \\
\label{eq:sbqmmleb}
&= \f{\sqrt{m}}{4} + \f{g}{2}\left(1+ 3\log 2 - \log 3 + \f{1}{2} (\log b - \log g) + \left(q-\f{3}{2}\right)\log m\right).
\end{align}

Let the expression in parentheses be denoted by $f_{1}(b, q, m)$. Since $2g \le m \le b$, it follows that

\begin{align}
f_{1}(b, q, m) &= 1+ 3\log 2 - \log 3 + \f{1}{2}\log \f{b}{g} + \left(q-\f{3}{2}\right)\log m \\
&\ge 1+ 3\log 2 - \log 3 + \f{1}{2} \log 2 + \left(2-\f{3}{2}\right)\log 2 > 0.
\end{align}
Therefore, applying the bound $2 \le g \le m/2$ to \eqref{eq:sbqmmleb}, we obtain

\begin{align}
\log S(b, q, m) &\le \f{\sqrt{m}}{4} + \f{m}{4}\left(1+ 3\log 2 - \log 3 + \f{1}{2} (\log b - \log 2) + \left(q-\f{3}{2}\right)\log m\right) \\
\label{eq:sbqmmleb2}
&= \f{\sqrt{m}}{4} + \f{m}{4}\left(1+ \f{5}{2}\log 2 - \log 3 + \f{1}{2}\log b + \left(q-\f{3}{2}\right)\log m\right).
\end{align}
Substituting this into the right-hand side of \eqref{eq:fbqm3} and denoting the resulting expression
by $G(b, q, m)$, we obtain

\begin{align}
G(b, q, m) ={} &\log C_{0} - m +\left(\f{m}{2}+1\right)\log b + \f{3}{2}\log q + \left(m-q(b-1)+\f{1}{2}\right)\log m \\
\label{eq:Gmleb}
&+ \f{\sqrt{m}}{4} + \f{m}{4}\left(1+ \f{5}{2}\log 2 - \log 3 + \f{1}{2}\log b + \left(q-\f{3}{2}\right)\log m\right),
\end{align}
and

\begin{align}
\pdv[2]{G}{m} &= \f{2q+5}{8m} + \f{q(b-1)-\f{1}{2}-\f{1}{16}\sqrt{m}}{m^{2}} \ge \f{2(b-1)-\f{1}{2}-\f{1}{16}\sqrt{b}}{m^{2}} \\
&= \f{\sqrt{b}(2\sqrt{b}-\f{1}{16}) - \f{5}{2}}{m^{2}} \ge \f{\sqrt{4}(2\sqrt{4}-\f{1}{16}) - \f{5}{2}}{m^{2}} > 0.
\end{align}
Thus $G$ is convex in $m$.
Since $4 \le m \le b$, the maximum is attained at one of the boundary points.

Setting $H_{1}(b, q) \ceq G(b, q, 4)$ and $H_{2}(b, q) = G(b, q, b)$, we obtain

\begin{align}
G(b, q, m) \le \max\{ H_{1}(b, q), H_{2}(b, q) \}.
\end{align}

Substituting $m = 4$ into \eqref{eq:Gmleb}, we obtain

\begin{align}
H_{1}(b, q) &= \log C_{0} + \left(-2bq+4q+\f{17}{2}\right)\log 2 - \log 3 - \f{5}{2} + \f{7}{2}\log b + \f{3}{2}\log q, \\
\pdv{H_{1}}{b} &= - 2q\log 2 + \f{7}{2b} \le - 2 \cdot 2 \log 2 + \f{7}{2\cdot 4} < 0, \\
\pdv{H_{1}}{q} &= (-2b+4)\log 2 + \f{3}{2q} \le (-2\cdot 4+4)\log 2 + \f{3}{2\cdot 2} < 0.
\end{align}

Substituting $m = b$ into \eqref{eq:Gmleb}, we obtain

\begin{align}
H_{2}(b, q) ={} &\log C_{0} + \f{\sqrt{b}}{4} - \f{3}{4}b + \f{5}{8}b\log 2 - \f{b}{4}\log 3 + \left(-\f{3}{4}bq+\f{5}{4}b+q+\f{3}{2}\right)\log b \\
&+ \f{3}{2}\log q, \\
\pdv{H_{2}}{b} ={} &\f{-3q+5}{4}\log b + \left(-\f{3}{4}+\f{1}{b}\right)q + \f{5}{8}\log 2 - \f{1}{4}\log 3 + \f{1}{2} + \f{3}{2b} + \f{1}{8\sqrt{b}}\\
\le{} &\f{-3\cdot 2+5}{4}\log 4 + \left(-\f{3}{4}+\f{1}{4}\right)\cdot 2 + \f{5}{8}\log 2 - \f{1}{4}\log 3 + \f{1}{2} + \f{3}{2\cdot 4} + \f{1}{8\sqrt{4}} < 0, \\
\pdv{H_{2}}{q} ={} &\left(1-\f{3}{4}b\right)\log b+\f{3}{2q}
\le \left(1-\f{3}{4}\cdot 4\right)\log 4+\f{3}{2\cdot 2} < 0.
\end{align}
Thus, both $H_{1}$ and $H_{2}$ are decreasing with respect to both $b$ and $q$.

Noting that $q$ must be even whenever $b$ is even, we compute the values as follows:

\begin{align}
&H_{1}(4, 2), H_{1}(5, 2) > 0, \\
&H_{1}(4, 4), H_{1}(5, 3), H_{1}(6, 2) < 0, \\
&H_{2}(k, 2)\ (4 \le k \le 9) > 0, \\
&H_{2}(4, 4), H_{2}(5, 3), H_{2}(10, 2) < 0.
\end{align}

Therefore, we have $G(b,q,m) < 0$ for all admissible $(b, q, m)$, except when

\begin{align}
(b, q) = (k, 2)\ (4 \le k \le 9).
\end{align}

\HL

\noindent
(ii) The subcase $m \ge b$.

For $m \ge b$, we have the bound

\begin{align}
\log S&(b, q, m) \le \f{\sqrt{m}}{4} + \f{g}{2}\left(1+\log 2 + \log \f{4}{3}\sqrt{\f{bg}{m}}b^{\f{bq}{m}-1} - \log g\right) \\
\label{eq:sbqmmgeb}
&= \f{\sqrt{m}}{4} + \f{g}{2}\left(1+ 3\log 2 - \log 3 + \left(\f{bq}{m}-\f{1}{2}\right)\log b - \f{1}{2}(\log m + \log g)\right).
\end{align}
Let the expression in parentheses be denoted by $f_{2}(b, q, m)$. 
Since $f_{2}$ is not necessarily non-negative, we first consider the subcase $f_{2}(b,q,m) < 0$.
\HL

\noindent
(ii-1) The subcase $m \ge b$ and $f_{2} < 0$.

When $f_{2}(b, q, m) < 0$, it follows from \eqref{eq:sbqmmgeb} that

\begin{align}
\log S(b, q, m) &< \f{\sqrt{m}}{4}.
\end{align}
Substituting this into the right-hand side of \eqref{eq:fbqm3} and denoting the resulting expression
by $G(b, q, m)$, we obtain

\begin{align}
G(b, q, m) &= \log C_{0} - m +\left(\f{m}{2}+1\right)\log b + \f{3}{2}\log q + \left(m-q(b-1)+\f{1}{2}\right)\log m + \f{\sqrt{m}}{4},
\end{align}
and

\begin{align}
\pdv[2]{G}{m} &= \f{1}{m} + \f{q(b-1)-\f{1}{2}-\f{1}{16}\sqrt{m}}{m^{2}} \ge \f{q(b-1)-\f{1}{2}-\f{1}{16}\sqrt{\f{bq}{2}}}{m^{2}} \\
&= \f{\sqrt{q}\left(\sqrt{q}(b-1)-\f{1}{16}\sqrt{\f{b}{2}}\right)-\f{1}{2}}{m^{2}}
\ge \f{\sqrt{2}\left(\sqrt{2}(b-1)-\f{1}{16}\sqrt{\f{b}{2}}\right)-\f{1}{2}}{m^{2}} \\
\label{eq:G2m2f2}
&= \f{\sqrt{2b}\left(\sqrt{2b}-\f{1}{16\sqrt{2}}\right)-\sqrt{2} - \f{1}{2}}{m^{2}}
\ge \f{\sqrt{8}\left(\sqrt{8}-\f{1}{16\sqrt{2}}\right)-\sqrt{2} - \f{1}{2}}{m^{2}} > 0.
\end{align}
Thus $G$ is convex in $m$.
Since $b \le m \le bq/2$, the maximum is attained at one of the boundary points.

Setting
$H_{1}(b, q) \ceq G(b, q, b)$ and $H_{2}(b, q) \ceq G(b, q, bq/2)$,
we obtain

\begin{align}
H_{1}(b, q) &= \log C_{0} + \f{\sqrt{b}}{4} - b + \left(-bq + \f{3}{2}b + q + \f{3}{2}\right)\log b + \f{3}{2}\log q, \\
\pdv{H_{1}}{b} &= \left(\f{3}{2}-q\right)\log b + \left(\f{1}{b}-1\right)q + \f{1}{2} + \f{3}{2b} + \f{1}{8\sqrt{b}} \\
&\le \left(\f{3}{2}-2\right)\log 4 + \left(\f{1}{4}-1\right)\cdot 2 + \f{1}{2} + \f{3}{2\cdot 4} + \f{1}{8\sqrt{4}} < 0, \\
\pdv{H_{1}}{q} &= (1-b)\log b + \f{3}{2q} \le (1-4)\log 4 + \f{3}{2\cdot 2} < 0,
\end{align}
and

\begin{align}
H_{2}(b, q) ={} &\log C_{0} + \left(\f{bq}{2}-q-\f{1}{2}\right)\log 2 + \f{\sqrt{2bq}}{8} - \f{bq}{2} + \left(-\f{bq}{4}+q+\f{3}{2}\right)\log b \\
&+ \left(-\f{bq}{2}+q+2\right)\log q, \\
\pdv{H_{2}}{b} ={} &\f{q}{4}\left(-\log b-2\log q-3+2\log 2 + \f{4}{b} + \f{1}{4}\sqrt{\f{2}{bq}}\right) + \f{3}{2b} \\
\le{} &\f{2}{4}\left(-\log 4-2\log 2-3+2\log 2 + \f{4}{4} + \f{1}{4}\sqrt{\f{2}{4\cdot 2}}\right) + \f{3}{2\cdot 4} < 0, \\
\pdv{H_{2}}{q} ={} &\f{b}{4}\left(-\log b - 4 + 2\log 2 + \f{1}{4}\sqrt{\f{2}{bq}} + \f{4\log b}{b}\right) + \left(1 - \f{b}{2}\right)\log q - \log 2 + 1 + \f{2}{q} \\
\le{} &\f{4}{4}\left(-\log 4 - 4 + 2\log 2 + \f{1}{4}\sqrt{\f{2}{4\cdot 2}} + \f{4\log 4}{4}\right) + \left(1 - \f{4}{2}\right)\log 2
- \log 2 + 1 + \f{2}{2} \\
<{} &0.
\end{align}
Thus, both $H_{1}$ and $H_{2}$ are decreasing with respect to both $b$ and $q$.

Noting that $q$ must be even whenever $b$ is even, we compute the values as follows:

\begin{align}
&H_{1}(4, 2) > 0, \\
&H_{1}(4, 4), H_{1}(5, 2) < 0, \\
&H_{2}(4, 2) > 0, \\
&H_{2}(4, 4), H_{2}(5, 2) < 0.
\end{align}

For $(b, q) = (4, 2)$, we have $m = 2, 4$, and for those combinations $f_{2}(b, q, m)$ is always positive.
Hence $(b, q) = (4, 2)$ does not fall into the present subcase, and we conclude that
$G(b, q, m) < 0$ for all $(b, q, m)$ with $m \ge b$ and $f_{2}(b, q, m) < 0$.

\HL

\noindent
(ii-2) The subcase $m \ge b$ and $f_{2} \ge 0$.

Since $f_{2}(b, q, m) \ge 0$, applying the bound $2 \le g \le m/2$ to \eqref{eq:sbqmmgeb}, we obtain

\begin{align}
\log S(b, q, m) &\le \f{\sqrt{m}}{4} + \f{m}{4}\left(1+ 3\log 2 - \log 3 + \left(\f{bq}{m}-\f{1}{2}\right)\log b - \f{1}{2}(\log m + \log 2)\right) \\
&= \f{\sqrt{m}}{4} + \f{m}{4}\left(1+ \f{5}{2}\log 2 - \log 3 + \left(\f{bq}{m}-\f{1}{2}\right)\log b - \f{1}{2}\log m\right).
\end{align}
Substituting this into the right-hand side of \eqref{eq:fbqm3} and denoting the resulting expression
by $G(b, q, m)$, we obtain

\begin{align}
G(b, q, m) ={} &\log C_{0} - m +\left(\f{m}{2}+1\right)\log b + \f{3}{2}\log q + \left(m-q(b-1)+\f{1}{2}\right)\log m \\
\label{eq:gbqmmgeb}
&+ \f{\sqrt{m}}{4} + \f{m}{4}\left(1+ \f{5}{2}\log 2 - \log 3 + \left(\f{bq}{m}-\f{1}{2}\right)\log b - \f{1}{2}\log m\right),
\end{align}
and by the same type of estimate as in \eqref{eq:G2m2f2},

\begin{align}
\pdv[2]{G}{m} =
\f{7}{8m} + \f{q(b-1)-\f{1}{2}-\f{1}{16}\sqrt{m}}{m^{2}} \ge \f{q(b-1)-\f{1}{2}-\f{1}{16}\sqrt{\f{bq}{2}}}{m^{2}} > 0.
\end{align}
Thus $G$ is convex in $m$.
Since $b \le m \le bq/2$, the maximum is attained at one of the boundary points.

Setting $H_{1}(b, q) \ceq G(b, q, b)$ and $H_{2}(b, q) \ceq G(b, q, bq/2)$, we obtain

\begin{align}
G(b, q, m) \le \max\{H_{1}(b, q), H_{2}(b, q) \}.
\end{align}

Substituting $m = b$ into \eqref{eq:gbqmmgeb}, we obtain

\begin{align}
H_{1}(b, q) ={} &\log C_{0} + \f{\sqrt{b}}{4} - \f{3}{4}b + \f{5}{8}b\log 2 - \f{b}{4}\log 3 +
\left(-\f{3}{4}bq+\f{5}{4}b+q+\f{3}{2}\right)\log b + \f{3}{2}\log q, \\
\pdv{H_{1}}{b} ={} &\f{-3q+5}{4}\log b + \left(-\f{3}{4}+\f{1}{b}\right)q + \f{5}{8}\log 2 - \f{1}{4}\log 3 + \f{1}{2} + \f{3}{2b} + \f{1}{8\sqrt{b}}\\
\le{} &\f{-3\cdot 2+5}{4}\log 4 + \left(-\f{3}{4}+\f{1}{4}\right)\cdot 2 + \f{5}{8}\log 2 - \f{1}{4}\log 3 + \f{1}{2} + \f{3}{2\cdot 4} + \f{1}{8\sqrt{4}} < 0, \\
\pdv{H_{1}}{q} ={} &\left(1-\f{3}{4}b\right)\log b+\f{3}{2q}
\le \left(1-\f{3}{4}\cdot 4\right)\log 4+\f{3}{2\cdot 2} < 0.
\end{align}

Substituting $m=bq/2$ into \eqref{eq:gbqmmgeb}, we obtain

\begin{align}
H_{2}(b, q) ={} &\log C_{0} + \left(\f{7}{8}bq-q-\f{1}{2}\right)\log 2 - \f{bq}{8}\log 3 - \f{3}{8}bq + \f{\sqrt{2bq}}{8} \\
& + \left(-\f{bq}{8}+q+\f{3}{2}\right)\log b + \left(-\f{9}{16}bq+q+2\right)\log q, \\
\pdv{H_{2}}{b} ={} &\f{q}{16}\left(-2\log b-9\log q-8+14\log 2-2\log 3+\f{16}{b}+\sqrt{\f{2}{bq}}\right)+\f{3}{2b} \\
&\le \f{2}{16}\left(-2\log 4-9\log 2-8+14\log 2-2\log 3+\f{16}{4}+\sqrt{\f{2}{4\cdot 2}}\right)+\f{3}{2\cdot 4} < 0, \\
\pdv{H_{2}}{q} ={} &\f{b}{16}\left(-2\log b -15 + 14\log 2-2\log 3\ + \sqrt{\f{2}{bq}} + \f{16\log b}{b}\right) \\
&+ 1 - \log 2+ \f{2}{q} + \left(-\f{9}{16}b+1\right)\log q \\
\le{} &\f{4}{16}\left(-2\log 4 -15 + 14\log 2-2\log 3\ + \sqrt{\f{2}{4\cdot 2}} + \f{16\log 4}{4}\right) \\
&+ 1 - \log 2+ \f{2}{2} + \left(-\f{9}{16}\cdot 4+1\right)\log 2 < 0.
\end{align}
Thus, both $H_{1}$ and $H_{2}$ are decreasing with respect to both $b$ and $q$.

Noting that $q$ must be even whenever $b$ is even, we compute the values as follows:

\begin{align}
&H_{1}(k, 2)\ (4 \le k \le 9) > 0, \\
&H_{1}(4, 4), H_{1}(5, 3), H_{1}(10, 2) < 0, \\
&H_{2}(k, 2)\ (4 \le k \le 9), H_{2}(4, 4), H_{2}(5, 3) > 0, \\
&H_{2}(4, 6), H_{2}(5, 4), H_{2}(7, 3), H_{2}(10, 2) < 0.
\end{align}

Therefore, we have $G(b, q, m) < 0$ except for

\begin{align}
(b, q) = (k, 2)\ (4 \le k \le 9), (4, 4), (5, 3).
\end{align}

Combining the results of (i), (ii-1), and (ii-2), when $\lvert D_{r} \rvert \ge 2$, $\B \ge g/2$, and $m \nmid q$,
the inequality \eqref{eq:dest3} holds for all pairs except the following:

\begin{align}
\label{eq:exB22}
(b, q) = (k, 2)\ (4 \le k \le 9), (4, 4), (5, 3).
\end{align}

\OL

By combining all the five cases \eqref{eq:exA1}, \eqref{eq:exA2}, \eqref{eq:exB1}, \eqref{eq:exB21}, and \eqref{eq:exB22},
we have shown that \eqref{eq:dest3} holds for all pairs $(b, q)$ except the following:

\begin{align}
&(3, k)\q (2 \le k \le 20), \\
&(4, 2), (4, 4), (4, 6), (4, 8), \\
&(5, 2), (5, 3), (5, 4), (5, 5), \\
&(6, 2), (7, 2), (8, 2), (9, 2).
\end{align}

For each of these exceptional pairs, direct computation shows that there exists
a permutation $y$ such that, with $x = \sigman$ fixed,
the automorphism group is trivial; see \tabref{tab:exceptionbge3-1} and \tabref{tab:exceptionbge3-2}.

When $(b, q) = (3, 2)$, the monodromy group $G = \langle x, y \rangle$ has order~120 and
$C_{S_n}(G) \cong \{1\}$.
For all remaining $(b, q)$ pairs, the group $\langle x, y \rangle$ is equal to $S_n$ or $A_n$.
Since $C_{S_n}(S_n) \cong C_{S_n}(A_n) \cong \{1\}$,
the automorphism group is trivial for each such pair $(x, y)$.

Thus, we conclude that every passport $[n, b^{q}, n]$ of genus~$\ge 2$ with
$b \ge 3$ and $q \ge 2$ admits at least one dessin with a trivial automorphism group.
\end{proof}
\HL

%%%%%%%%%%%%
%%% table-bge3-1.tex: begin
\begin{table}[htbp]
  \centering
  \scriptsize
  \begin{tabular}{|c|c|c|c|c|p{11.1cm}|}
    \hline
    \multirow{2}{*}{$b$} &
    \multirow{2}{*}{$q$} &
    \multicolumn{4}{|l|}{$y$} \\ \cline{3-6}
    & & $G$ & $w(x,y)$ & $p_{w}$ & $w$ \\ \hline
    \multirow{2}{*}{$3$} &
    \multirow{2}{*}{$2$} &
    \multicolumn{4}{|p{7.5cm}|}{$(1\ 2\ 4)(3\ 5\ 6)$} \\ \cline{3-6}
     & & $120$ & & & \\ \hline
    \multirow{2}{*}{$3$} &
    \multirow{2}{*}{$3$} &
    \multicolumn{4}{|p{7.5cm}|}{$(1\ 2\ 4)(3\ 6\ 8)(5\ 9\ 7)$} \\ \cline{3-6}
     & & $A_{9}$ & $x^{3}yxy^{2}x^{2}y^{2}$ & $5$ & $(1\ 4\ 9\ 5\ 2)$ \\ \hline
    \multirow{2}{*}{$3$} &
    \multirow{2}{*}{$4$} &
    \multicolumn{4}{|p{7.5cm}|}{$(1\ 10\ 11)(2\ 4\ 7)(3\ 5\ 9)(6\ 12\ 8)$} \\ \cline{3-6}
     & & $S_{12}$ & $yxyx^{2}yxyx^{2}$ & $7$ & $(1\ 9\ 10\ 11\ 12\ 4\ 3)$ \\ \hline
    \multirow{2}{*}{$3$} &
    \multirow{2}{*}{$5$} &
    \multicolumn{4}{|p{7.5cm}|}{$(1\ 3\ 6)(2\ 10\ 8)(4\ 7\ 9)(5\ 11\ 15)(12\ 13\ 14)$} \\ \cline{3-6}
     & & $A_{15}$ & $x^{2}yxy^{2}x^{2}y^{2}x$ & $11$ & $(1\ 8\ 4\ 6\ 9\ 10\ 13\ 15\ 2\ 12\ 14)$ \\ \hline
    \multirow{2}{*}{$3$} &
    \multirow{2}{*}{$6$} &
    \multicolumn{4}{|p{7.5cm}|}{$(1\ 13\ 10)(2\ 7\ 4)(3\ 16\ 9)(5\ 15\ 11)(6\ 14\ 17)(8\ 12\ 18)$} \\ \cline{3-6}
     & & $S_{18}$ & $x^{4}yxyx^{2}y^{2}x^{3}$ & $13$ & $(1\ 6\ 7\ 4\ 2\ 9\ 11\ 14\ 5\ 16\ 17\ 15\ 12)$ \\ \hline
    \multirow{2}{*}{$3$} &
    \multirow{2}{*}{$7$} &
    \multicolumn{4}{|p{7.5cm}|}{$(1\ 13\ 16)(2\ 6\ 7)(3\ 4\ 5)(8\ 20\ 18)(9\ 19\ 21)(10\ 12\ 17)(11\ 14\ 15)$} \\ \cline{3-6}
     & & $A_{21}$ & $xy^{2}x^{2}yx^{2}y$ & $17$ & $(1\ 2\ 17\ 10\ 13\ 18\ 12\ 8\ 15\ 21\ 14\ 11\ 6\ 20\ 9\ 16\ 3)$ \\ \hline
    \multirow{2}{*}{$3$} &
    \multirow{2}{*}{$8$} &
    \multicolumn{4}{|p{7.5cm}|}{$(1\ 17\ 22)(2\ 8\ 24)(3\ 15\ 23)(4\ 6\ 20)(5\ 12\ 18)(7\ 21\ 9)(10\ 14\ 16)(11\ 19\ 13)$} \\ \cline{3-6}
     & & $S_{24}$ & $xyx^{2}yx^{2}y^{2}xyxyx$ & $19$ & $(2\ 18\ 23\ 19\ 20\ 17\ 8\ 7\ 6\ 15\ 9\ 11\ 5\ 22\ 10\ 21\ 13\ 3\ 24)$ \\ \hline
    \multirow{2}{*}{$3$} &
    \multirow{2}{*}{$9$} &
    \multicolumn{4}{|p{7.5cm}|}{$(1\ 26\ 24)(2\ 9\ 22)(3\ 17\ 15)(4\ 12\ 27)(5\ 20\ 8)(6\ 21\ 19)(7\ 13\ 10)(11\ 18\ 14)(16\ 23\ 25)$} \\ \cline{3-6}
     & & $A_{27}$ & $x^{2}yx^{2}yx^{2}y$ & $17$ & $(2\ 10\ 3\ 7\ 22\ 13\ 11\ 9\ 19\ 15\ 4\ 12\ 20\ 24\ 8\ 5\ 14)$ \\ \hline
    \multirow{2}{*}{$3$} &
    \multirow{2}{*}{$10$} &
    \multicolumn{4}{|p{7.5cm}|}{$(1\ 13\ 30)(2\ 8\ 29)(3\ 5\ 7)(4\ 28\ 12)(6\ 21\ 14)(9\ 19\ 24)(10\ 22\ 23)(11\ 16\ 26)(15\ 27\ 20)(17\ 18\ 25)$} \\ \cline{3-6}
     & & $S_{30}$ & $yx^{2}y^{2}xyx^{2}y^{2}x$ & $23$ & $(1\ 10\ 5\ 18\ 20\ 27\ 4\ 28\ 9\ 24\ 30\ 11\ 13\ 12\ 26\ 16\ 29\ 21\ 7\ 22\ 17\ 15\ 8)$ \\ \hline
    \multirow{2}{*}{$3$} &
    \multirow{2}{*}{$11$} &
    \multicolumn{4}{|p{7.5cm}|}{$(1\ 24\ 12)(2\ 27\ 32)(3\ 20\ 31)(4\ 29\ 26)(5\ 19\ 14)(6\ 33\ 10)(7\ 15\ 11)(8\ 21\ 22)(9\ 18\ 28)(13\ 16\ 23)(17\ 25\ 30)$} \\ \cline{3-6}
     & & $A_{33}$ & $x^{2}y^{2}x^{2}yx^{3}y^{2}x$ & $29$ & $(1\ 6\ 13\ 2\ 31\ 7\ 29\ 17\ 33\ 25\ 28\ 32\ 5\ 11\ 14\ 24\ 8\ 16\ 26\ 10\ 27\ 3\ 20\ 30\ 9\ 23\ 22\ 15\ 19)$ \\ \hline
    \multirow{2}{*}{$3$} &
    \multirow{2}{*}{$12$} &
    \multicolumn{4}{|p{7.5cm}|}{$(1\ 15\ 12)(2\ 29\ 25)(3\ 17\ 18)(4\ 26\ 35)(5\ 16\ 13)(6\ 34\ 8)(7\ 30\ 27)(9\ 31\ 14)(10\ 24\ 33)(11\ 20\ 23)(19\ 21\ 36)(22\ 28\ 32)$} \\ \cline{3-6}
     & & $S_{36}$ & $xyxyx^{6}y^{2}$ & $31$ & $(1\ 27\ 24\ 10\ 4\ 19\ 5\ 29\ 31\ 6\ 25\ 17\ 23\ 20\ 22\ 28\ 11\ 36\ 7\ 21\ 18\ 2\ 13\ 26\ 3\ 9\ 34\ 30\ 35\ 12\ 16)$ \\ \hline
    \multirow{2}{*}{$3$} &
    \multirow{2}{*}{$13$} &
    \multicolumn{4}{|p{7.5cm}|}{$(1\ 26\ 18)(2\ 12\ 5)(3\ 11\ 15)(4\ 30\ 31)(6\ 21\ 13)(7\ 19\ 14)(8\ 32\ 34)(9\ 17\ 25)(10\ 37\ 29)(16\ 24\ 33)(20\ 27\ 35)(22\ 28\ 39)\allowbreak(23\ 36\ 38)$} \\ \cline{3-6}
     & & $A_{39}$ & $xy^{2}x^{2}y^{2}x^{2}y^{2}x$ & $31$ & $(1\ 34\ 29\ 12\ 19\ 3\ 2\ 4\ 7\ 18\ 21\ 15\ 38\ 14\ 39\ 11\ 25\ 10\ 20\ 24\ 26\ 6\ 28\ 5\ 22\ 36\ 32\ 16\ 13\ 35\ 8)$ \\ \hline
    \multirow{2}{*}{$3$} &
    \multirow{2}{*}{$14$} &
    \multicolumn{4}{|p{7.5cm}|}{$(1\ 41\ 22)(2\ 40\ 4)(3\ 16\ 33)(5\ 31\ 10)(6\ 35\ 21)(7\ 36\ 13)(8\ 30\ 12)(9\ 20\ 15)(11\ 24\ 17)(14\ 23\ 37)(18\ 25\ 42)(19\ 39\ 26)\allowbreak(27\ 28\ 29)(32\ 34\ 38)$} \\ \cline{3-6}
     & & $S_{42}$ & $xy^{2}xyx^{2}y^{2}x^{2}y$ & $37$ & $(2\ 29\ 37\ 39\ 28\ 24\ 9\ 26\ 21\ 12\ 41\ 13\ 31\ 7\ 38\ 20\ 10\ 32\ 36\ 5\ 40\ 35\ 33\ 16\ 6\ 23\ 14\ 4\ 27\ 22\allowbreak42\ 19\ 25\ 8\ 11\ 15\ 3)$ \\ \hline
    \multirow{2}{*}{$3$} &
    \multirow{2}{*}{$15$} &
    \multicolumn{4}{|p{7.5cm}|}{$(1\ 40\ 5)(2\ 29\ 6)(3\ 19\ 9)(4\ 21\ 27)(7\ 26\ 45)(8\ 28\ 34)(10\ 16\ 33)(11\ 23\ 41)(12\ 20\ 25)(13\ 37\ 24)(14\ 15\ 18)(17\ 38\ 42)\allowbreak(22\ 35\ 30)(31\ 32\ 36)(39\ 43\ 44)$} \\ \cline{3-6}
     & & $A_{45}$ & $y^{2}x^{2}y^{2}x^{2}$ & $41$ & $(1\ 41\ 23\ 30\ 16\ 42\ 26\ 33\ 37\ 5\ 6\ 32\ 35\ 14\ 25\ 11\ 7\ 4\ 36\ 3\ 38\ 9\ 39\ 20\ 31\ 15\ 43\ 8\ 22\ 44\ 45\allowbreak34\ 28\ 13\ 10\ 21\ 24\ 19\ 29\ 17\ 40)$ \\ \hline
    \multirow{2}{*}{$3$} &
    \multirow{2}{*}{$16$} &
    \multicolumn{4}{|p{7.5cm}|}{$(1\ 4\ 14)(2\ 33\ 20)(3\ 8\ 22)(5\ 24\ 46)(6\ 36\ 32)(7\ 47\ 16)(9\ 27\ 41)(10\ 34\ 30)(11\ 12\ 23)(13\ 45\ 37)(15\ 48\ 35)(17\ 43\ 26)\allowbreak(18\ 39\ 28)(19\ 44\ 42)(21\ 38\ 29)(25\ 40\ 31)$} \\ \cline{3-6}
     & & $S_{48}$ & $yxyxy^{2}xyxyxy^{2}x$ & $43$ & $(1\ 21\ 41\ 7\ 4\ 8\ 23\ 40\ 33\ 9\ 3\ 34\ 47\ 5\ 37\ 20\ 10\ 19\ 26\ 32\ 6\ 31\ 28\ 18\ 29\ 48\ 43\ 2\ 13\ 44\ 45\allowbreak17\ 30\ 24\ 22\ 38\ 14\ 12\ 16\ 25\ 15\ 27\ 42)$ \\ \hline
    \multirow{2}{*}{$3$} &
    \multirow{2}{*}{$17$} &
    \multicolumn{4}{|p{7.5cm}|}{$(1\ 40\ 23)(2\ 32\ 9)(3\ 24\ 22)(4\ 46\ 34)(5\ 31\ 11)(6\ 12\ 18)(7\ 51\ 39)(8\ 33\ 16)(10\ 21\ 29)(13\ 30\ 41)(14\ 25\ 19)(15\ 50\ 17)\allowbreak(20\ 43\ 47)(26\ 27\ 36)(28\ 42\ 48)(35\ 38\ 49)(37\ 44\ 45)$} \\ \cline{3-6}
     & & $A_{51}$ & $xy^{2}x^{3}yxyxyx$ & $47$ & $(1\ 13\ 45\ 30\ 51\ 49\ 23\ 50\ 15\ 27\ 6\ 34\ 28\ 18\ 29\ 36\ 24\ 43\ 31\ 19\ 46\ 38\ 40\ 14\ 37\ 33\ 7\ 47\ 17\allowbreak21\ 41\ 22\ 26\ 2\ 5\ 20\ 8\ 25\ 16\ 12\ 11\ 10\ 9\ 39\ 35\ 44\ 48)$ \\ \hline
    \multirow{2}{*}{$3$} &
    \multirow{2}{*}{$18$} &
    \multicolumn{4}{|p{7.5cm}|}{$(1\ 4\ 28)(2\ 20\ 23)(3\ 25\ 11)(5\ 42\ 54)(6\ 7\ 38)(8\ 45\ 47)(9\ 34\ 21)(10\ 52\ 40)(12\ 32\ 29)(13\ 37\ 35)(14\ 26\ 27)(15\ 44\ 17)\allowbreak(16\ 51\ 22)(18\ 19\ 43)(24\ 53\ 36)(30\ 41\ 50)(31\ 33\ 48)(39\ 46\ 49)$} \\ \cline{3-6}
     & & $S_{54}$ & $yxy^{2}xy^{2}xy^{2}xy$ & $47$ & $(1\ 48\ 5\ 32\ 47\ 3\ 17\ 13\ 21\ 39\ 36\ 53\ 42\ 29\ 18\ 41\ 34\ 27\ 51\ 6\ 52\ 8\ 11\ 35\ 4\ 37\ 23\ 22\ 16\ 44\allowbreak2\ 45\ 50\ 49\ 28\ 26\ 12\ 24\ 25\ 15\ 40\ 19\ 9\ 31\ 33\ 10\ 20)$ \\ \hline
    \multirow{2}{*}{$3$} &
    \multirow{2}{*}{$19$} &
    \multicolumn{4}{|p{7.5cm}|}{$(1\ 33\ 50)(2\ 20\ 34)(3\ 36\ 19)(4\ 56\ 25)(5\ 32\ 38)(6\ 22\ 30)(7\ 15\ 57)(8\ 53\ 41)(9\ 12\ 47)(10\ 39\ 14)(11\ 55\ 52)(13\ 26\ 45)\allowbreak(16\ 43\ 28)(17\ 44\ 24)(18\ 49\ 37)(21\ 27\ 29)(23\ 42\ 40)(31\ 35\ 46)(48\ 54\ 51)$} \\ \cline{3-6}
     & & $A_{57}$ & $x^{3}yx^{5}yxy^{2}$ & $53$ & $(1\ 44\ 54\ 43\ 40\ 53\ 47\ 38\ 55\ 14\ 19\ 45\ 5\ 6\ 26\ 3\ 17\ 15\ 36\ 2\ 51\ 10\ 35\ 31\ 13\ 22\ 37\ 25\ 50\ 18\allowbreak42\ 33\ 7\ 8\ 16\ 48\ 46\ 20\ 11\ 34\ 41\ 28\ 24\ 52\ 9\ 23\ 29\ 30\ 12\ 27\ 49\ 56\ 21)$ \\ \hline
    \multirow{2}{*}{$3$} &
    \multirow{2}{*}{$20$} &
    \multicolumn{4}{|p{7.5cm}|}{$(1\ 6\ 35)(2\ 8\ 17)(3\ 46\ 54)(4\ 14\ 29)(5\ 50\ 48)(7\ 51\ 52)(9\ 28\ 26)(10\ 55\ 56)(11\ 34\ 59)(12\ 24\ 36)(13\ 57\ 37)(15\ 42\ 19)\allowbreak(16\ 23\ 60)(18\ 21\ 31)(20\ 22\ 45)(25\ 53\ 33)(27\ 32\ 39)(30\ 49\ 40)(38\ 47\ 41)(43\ 44\ 58)$} \\ \cline{3-6}
     & & $S_{60}$ & $yxy^{2}xy^{2}xy^{2}xy$ & $53$ & $(1\ 4\ 14\ 52\ 34\ 47\ 12\ 13\ 2\ 48\ 39\ 29\ 45\ 11\ 9\ 54\ 5\ 56\ 57\ 6\ 24\ 60\ 30\ 21\ 37\ 18\ 44\ 33\ 22\ 38\allowbreak46\ 36\ 23\ 59\ 20\ 32\ 8\ 55\ 40\ 15\ 51\ 42\ 49\ 17\ 16\ 19\ 53\ 43\ 28\ 35\ 26\ 50\ 31)$ \\ \hline
    \multirow{2}{*}{$4$} &
    \multirow{2}{*}{$2$} &
    \multicolumn{4}{|p{7.5cm}|}{$(1\ 6\ 2\ 7)(3\ 4\ 8\ 5)$} \\ \cline{3-6}
     & & $S_{8}$ & $y^{2}xy^{2}x$ & $3$ & $(3\ 7\ 5)$ \\ \hline
    \multirow{2}{*}{$4$} &
    \multirow{2}{*}{$4$} &
    \multicolumn{4}{|p{7.5cm}|}{$(1\ 4\ 8\ 15)(2\ 3\ 10\ 16)(5\ 14\ 7\ 12)(6\ 13\ 11\ 9)$} \\ \cline{3-6}
     & & $S_{16}$ & $y^{3}xyx$ & $13$ & $(2\ 13\ 4\ 8\ 14\ 16\ 12\ 7\ 10\ 3\ 11\ 9\ 15)$ \\ \hline
    \multirow{2}{*}{$4$} &
    \multirow{2}{*}{$6$} &
    \multicolumn{4}{|p{7.5cm}|}{$(1\ 20\ 11\ 6)(2\ 24\ 15\ 13)(3\ 23\ 18\ 22)(4\ 17\ 7\ 8)(5\ 10\ 16\ 14)(9\ 19\ 12\ 21)$} \\ \cline{3-6}
     & & $S_{24}$ & $x^{2}y^{2}x^{4}y^{2}x^{2}$ & $19$ & $(2\ 3\ 8\ 5\ 19\ 24\ 23\ 4\ 15\ 11\ 9\ 18\ 16\ 6\ 21\ 17\ 13\ 22\ 10)$ \\ \hline
    \multirow{2}{*}{$4$} &
    \multirow{2}{*}{$8$} &
    \multicolumn{4}{|p{7.5cm}|}{$(1\ 29\ 8\ 28)(2\ 15\ 26\ 14)(3\ 32\ 9\ 13)(4\ 22\ 31\ 18)(5\ 27\ 6\ 24)(7\ 17\ 19\ 12)(10\ 25\ 30\ 20)(11\ 23\ 21\ 16)$} \\ \cline{3-6}
     & & $S_{32}$ & $x^{2}y^{2}x^{3}y^{2}x$ & $29$ & $(1\ 30\ 21\ 13\ 24\ 18\ 32\ 23\ 12\ 11\ 27\ 15\ 4\ 5\ 3\ 28\ 6\ 20\ 17\ 22\ 9\ 10\ 29\ 2\ 19\ 31\ 25\ 8\ 7)$ \\ \hline
  \end{tabular} \\
  \vspace{1\baselineskip}
  \caption{Elements $y$ such that $\AD \cong \{ 1 \}$ for $b = 3, 4$ ($x = (1\ 2\ \ldots\ n)$)}\label{tab:exceptionbge3-1}
\end{table}
%%% table-bge3-1.tex: end
%%%%%%%%%%%%

%%%%%%%%%%%%
%%% table-bge3-2.tex: begin
\begin{table}[htbp]
  \centering
  \scriptsize
  \begin{tabular}{|c|c|c|c|c|p{11.1cm}|}
    \hline
    \multirow{2}{*}{$b$} &
    \multirow{2}{*}{$q$} &
    \multicolumn{4}{|l|}{$y$} \\ \cline{3-6}
    & & $G$ & $w(x,y)$ & $p_{w}$ & $w$ \\ \hline
    \multirow{2}{*}{$5$} &
    \multirow{2}{*}{$2$} &
    \multicolumn{4}{|p{7.5cm}|}{$(1\ 2\ 10\ 8\ 5)(3\ 7\ 4\ 9\ 6)$} \\ \cline{3-6}
     & & $S_{10}$ & $y^{2}xy^{2}x$ & $7$ & $(1\ 3\ 9\ 7\ 8\ 6\ 5)$ \\ \hline
    \multirow{2}{*}{$5$} &
    \multirow{2}{*}{$3$} &
    \multicolumn{4}{|p{7.5cm}|}{$(1\ 11\ 12\ 5\ 8)(2\ 9\ 10\ 3\ 4)(6\ 14\ 7\ 13\ 15)$} \\ \cline{3-6}
     & & $A_{15}$ & $y^{2}x^{2}y^{3}x$ & $11$ & $(2\ 5\ 14\ 3\ 8\ 7\ 13\ 10\ 4\ 6\ 11)$ \\ \hline
    \multirow{2}{*}{$5$} &
    \multirow{2}{*}{$4$} &
    \multicolumn{4}{|p{7.5cm}|}{$(1\ 8\ 2\ 16\ 13)(3\ 19\ 17\ 10\ 20)(4\ 11\ 15\ 7\ 12)(5\ 6\ 9\ 14\ 18)$} \\ \cline{3-6}
     & & $S_{20}$ & $xyx^{2}y^{3}x$ & $17$ & $(1\ 20\ 6\ 2\ 5\ 4\ 14\ 10\ 19\ 18\ 17\ 16\ 7\ 8\ 13\ 3\ 15)$ \\ \hline
    \multirow{2}{*}{$5$} &
    \multirow{2}{*}{$5$} &
    \multicolumn{4}{|p{7.5cm}|}{$(1\ 8\ 19\ 24\ 21)(2\ 13\ 7\ 12\ 17)(3\ 6\ 18\ 4\ 9)(5\ 15\ 22\ 25\ 11)(10\ 16\ 23\ 20\ 14)$} \\ \cline{3-6}
     & & $A_{25}$ & $xy^{2}xy^{2}xy^{2}xy^{2}$ & $17$ & $(1\ 21\ 10\ 25\ 15\ 16\ 18\ 8\ 23\ 11\ 7\ 2\ 5\ 20\ 9\ 24\ 6)$ \\ \hline
    \multirow{2}{*}{$6$} &
    \multirow{2}{*}{$2$} &
    \multicolumn{4}{|p{7.5cm}|}{$(1\ 9\ 5\ 7\ 10\ 6)(2\ 3\ 4\ 12\ 8\ 11)$} \\ \cline{3-6}
     & & $S_{12}$ & $x^{3}yx^{4}yx$ & $3$ & $(4\ 5\ 10)$ \\ \hline
    \multirow{2}{*}{$7$} &
    \multirow{2}{*}{$2$} &
    \multicolumn{4}{|p{7.5cm}|}{$(1\ 4\ 14\ 11\ 12\ 6\ 9)(2\ 10\ 3\ 8\ 5\ 13\ 7)$} \\ \cline{3-6}
     & & $S_{14}$ & $y^{2}xy^{2}x$ & $11$ & $(1\ 11\ 8\ 7\ 12\ 5\ 3\ 9\ 4\ 13\ 2)$ \\ \hline
    \multirow{2}{*}{$8$} &
    \multirow{2}{*}{$2$} &
    \multicolumn{4}{|p{7.5cm}|}{$(1\ 14\ 4\ 9\ 11\ 15\ 8\ 13)(2\ 7\ 10\ 5\ 16\ 12\ 3\ 6)$} \\ \cline{3-6}
     & & $S_{16}$ & $yx^{2}y^{4}$ & $13$ & $(2\ 7\ 4\ 5\ 13\ 15\ 12\ 11\ 6\ 10\ 16\ 9\ 8)$ \\ \hline
    \multirow{2}{*}{$9$} &
    \multirow{2}{*}{$2$} &
    \multicolumn{4}{|p{7.5cm}|}{$(1\ 10\ 3\ 15\ 9\ 5\ 2\ 12\ 17)(4\ 11\ 13\ 18\ 16\ 14\ 7\ 8\ 6)$} \\ \cline{3-6}
     & & $S_{18}$ & $x^{3}yx^{2}y^{2}x^{2}yx$ & $13$ & $(1\ 6\ 4\ 12\ 13\ 14\ 15\ 8\ 9\ 10\ 11\ 5\ 3)$ \\ \hline
  \end{tabular} \\
  \vspace{1\baselineskip}
  \caption{Elements $y$ such that $\AD \cong \{ 1 \}$ for $b \ge 5$ ($x = (1\ 2\ \ldots\ n)$)}\label{tab:exceptionbge3-2}
\end{table}
%%% table-bge3-2.tex: end
%%%%%%%%%%%%

To find elements $y$ such that $\langle x, y \rangle = S_{n}$ or $A_{n}$ for $x = \sigman$,
we applied \thmref{thm:pn-3}, as in the case of $b = 2$.

For a permutation $y$ of cycle type $(b^{q})$, assume that $G = \langle x, y \rangle$ has no blocks
with respect to any residue class of a divisor $m$ of $n = bq$ with $2 \le m < n$, and that $G$ contains
an element $w$ of cycle type $(p_{w})$ for a prime $p_{w} \le n - 3$.
Then $G = S_{n}$ when $n$ is even, and $G = A_{n}$ when $n$ is odd.

A list of such examples of $y$ and $w$ obtained by computation is shown in \tabref{tab:exceptionbge3-1}
and \tabref{tab:exceptionbge3-2}.
As in \tabref{tab:exceptionbeq2}, we completed this table by generating random permutations
$y$ of cycle type $(b^{q})$ and searching for an element $w$ satisfying the required conditions
among the elements of $\langle x, y \rangle$.

When $(b, q) = (3, 2)$, the monodromy group never becomes $S_{6}$.
Instead, it can be a group of order~$120$, in which case the automorphism group is trivial.
A dessin realizing this case is shown in \figref{fig:g2-120}. This is the same dessin as the one
shown in the lower right of \figref{fig:genus2-n6} in Section~\ref{sec:uniform}.

\begin{figure}[htbp]
\centering
\includegraphics[width=80mm]{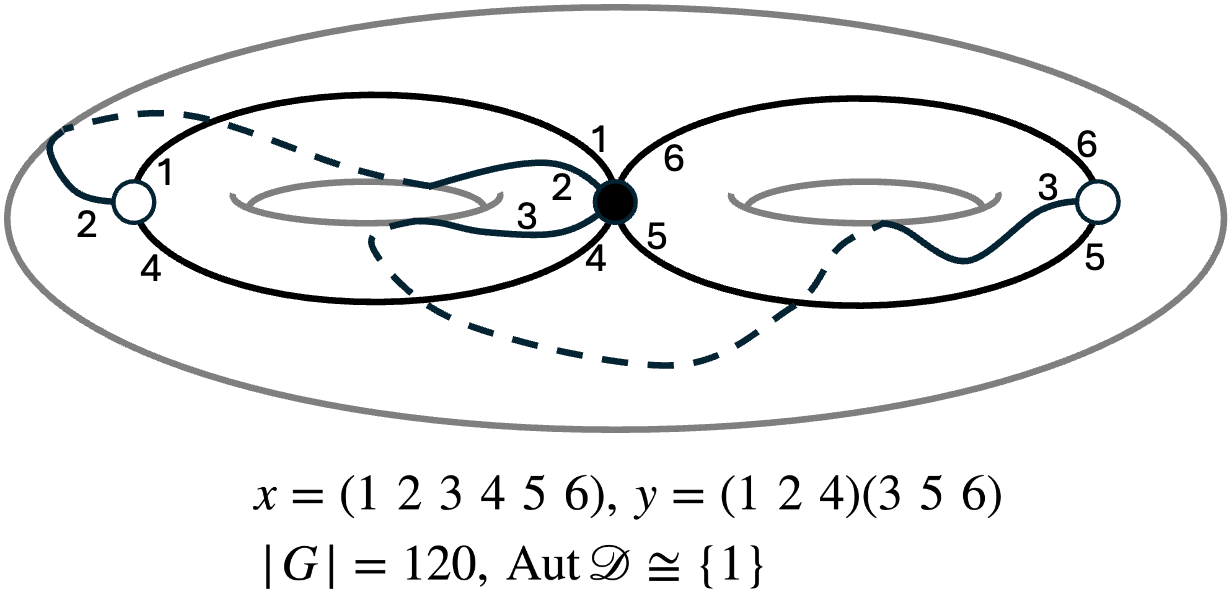} 
\caption{A dessin with trivial automorphism group having passport $[6,3^{2},6]$}
\label{fig:g2-120}
\end{figure}

%%%%%%%%%%%%%%%%%%%%%%%%%%%%%%%%%%%%
\subsection{Passports $[n, b^{q}, n]$}

Combining the results of this section, we obtain the following theorem.

\begin{thm}
\label{thm:gge2}
\thmggetwo
\end{thm}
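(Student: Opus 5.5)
The plan is to assemble \thmref{thm:gge2} from the three results proved earlier in this section, and then transfer the conclusion to the two permuted passports by the symmetry of automorphism groups.

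First, split according to $q$. If $q = 1$, then $b = n$ and the passport is $[n, n, n]$. This case is exactly \propref{prop:nnn}, which produces a dessin with monodromy group $A_n$ and hence trivial automorphism group whenever the genus is at least~$2$.

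If $q \ge 2$, the genus $(n-q)/2$ being an integer at least~$2$ forces $n \equiv q \pmod 2$ and $b = n/q \ge 2$. We then split on $b$:
\begin{itemize}
\item for $b = 2$, \propref{prop:beq2} gives the required dessin;
\item for $b \ge 3$, \propref{prop:bge3} gives it.
\end{itemize}
Both propositions rest on the counting argument: \thmref{thm:MN} bounds $N/T$ from below, \thmref{thm:imbq} with Stirling estimates bounds $I/T$ from above, and \propref{prop:primaut} converts primitivity into triviality of $\Aut\msD$. The finitely many exceptional pairs $(b,q)$ are covered by the explicit permutations in \tabref{tab:exceptionbeq2}, \tabref{tab:exceptionbge3-1}, and \tabref{tab:exceptionbge3-2}. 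So every passport $[n, b^{q}, n]$ of genus $\ge 2$ admits a dessin with $\Aut\msD \cong \{1\}$.

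For the passports $[b^{q}, n, n]$ and $[n, n, b^{q}]$, use the observation after the definition of the automorphism group. Since $\langle x,y\rangle = \langle x,z\rangle = \langle y,z\rangle$ with $z=(xy)^{-1}$, the monodromy group, and hence its centralizer $\Aut\msD$, does not change when the roles of black vertices, white vertices, and faces are permuted. Concretely, take a pair $(x,y)$ realizing $[n,b^q,n]$ with trivial centralizer. The new triple of generators $(y,z)$, or $(z,x)$ with appropriate inverses so that the product condition $xyz=1$ is preserved, realizes the permuted passport. For instance, the triple $(y, z, x)$ satisfies $y z x = y (xy)^{-1} x = 1$, so the pair $(y,z)$ gives passport $[b^q, n, n]$. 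Likewise $(z,x)$ gives $[n, n, b^q]$. These pairs generate the same group $G$, so the centralizer is again trivial. Transitivity is also preserved, and the genus is unchanged because it depends only on the multiset of partitions.

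The only delicate point is bookkeeping: checking that the case split on $q=1$, $b=2$, $b\ge3$ covers all uniform passports of this shape with genus $\ge 2$, and that the cyclic relabeling of generators yields exactly the two stated permutations. All the substantive work was done in the earlier propositions, so this should not present a real obstacle.
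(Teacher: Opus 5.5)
Your proposal is correct and matches the paper's proof. The paper also reduces to $[n, b^{q}, n]$ by the black/white/face symmetry and then splits into $q = 1$ (\propref{prop:nnn}), $b = 2$ (\propref{prop:beq2}), and $b \ge 3$, $q \ge 2$ (\propref{prop:bge3}); your explicit check that $(y,z)$ and $(z,x)$, with $z = (xy)^{-1}$, realize $[b^{q}, n, n]$ and $[n, n, b^{q}]$ while generating the same monodromy group just spells out the symmetry the paper invokes in one line.
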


\begin{proof}
Since the automorphism group is symmetric with respect to black vertices, white vertices, and faces,
it suffices to prove the statement for the passport $[n, b^{q}, n]$ ($n = bq$).

By \eqref{eq:genus}, the genus is

\begin{align}
\f{n - (1 + q + 1)}{2} + 1 = \f{n - q}{2} = \f{q(b-1)}{2} \ge 2.
\end{align}
Therefore, we have $b \ge 2$.

The case $b = n$ ($q = 1$) was proved in \propref{prop:nnn}.

The case $b = 2$ was proved in \propref{prop:beq2}.

The case $3 \le b < n \ (q \ge 2)$ was proved in \propref{prop:bge3}.
\end{proof}

%%%%%%%%%%%%%%%%%%%%%%%%%%%%%%%%%%%%%%%%%%%%%%%%
\HL
\FloatBarrier
\subsection*{Acknowledgements}

I would like to express my sincere gratitude to Associate Professor Yasuhiro Wakabayashi for his detailed guidance
and insightful advice throughout this research.

I am also deeply grateful to all the members of the Wakabayashi Laboratory for their valuable discussions and support.

\HL

\bibliographystyle{amsalpha}
\bibliography{References}

\end{document}